\font\bigbf=cmbx10 at 16pt
\def\ds{\displaystyle}
\def\forall{\hbox{for all}~}
\def\R{\mathbb R}
\def\vsk{\vskip 4em}
\def\v{\vskip 1em}
\def\sqr#1#2{\vbox{\hrule height .#2pt
\hbox{\vrule width .#2pt height #1pt \kern #1pt
\vrule width .#2pt}\hrule height .#2pt }}
\newcommand{\set}[1]{\left\{#1\right\}}
\newcommand{\p}[1]{\left( #1 \right)}
\newtheorem{theorem}{Theorem}[section]
\newtheorem{definition}[theorem]{Definition}
\newtheorem{lemma}[theorem]{Lemma}
\newtheorem{remark}[theorem]{Remark}
\newtheorem{proposition}[theorem]{\textbf{Proposition}}
\begin{document}

\title{\bigbf Solutions to a system of first order H-J equations related to a debt management problem 
}

\vsk

\author{Antonio Marigonda$^{(1)}$ and Khai T. Nguyen$^{(2)}$
\\
 {\small $^{(1)}$ Department of Computer Science, University of Verona, ITALY}\\  
  {\small $^{(2)}$ Department of Mathematics, North Carolina State University, USA}\\ 
 \\  {\small E-mails: ~antonio.marigonda@univr.it,  ~ khai@math.ncsu.edu}
}

\maketitle

\begin{abstract}
The paper studies a system of first order Hamilton-Jacobi equations with discontinuous coefficients,  arising from a model of deterministic optimal debt management in infinite time horizon, with exponential discount and currency devaluation. The existence of an equilibrium solution is obtained by a suitable concatenation of backward solutions to the system of Hamilton-Jacobi equations. A detailed analysis of the behavior of the solution as the debt-ratio-income $x^*\to +\infty$ is also provided. 
\vspace{0.2in}

\noindent
{\bf Key  words.} Hamilton-Jacobi equations, optimal debt management, equilibrium solutions
\end{abstract}

\section{Introduction} Consider a system of Hamilton-Jacobi equation 
\begin{equation}\label{HJ0}
    \begin{cases}
        rV~ =~\ds H(x,V',p) + \dfrac{\sigma^2x^2}{2}\cdot V'', \\[10pt]

        \p{ r + \lambda + v(x)}\cdot p - \p{ r + \lambda} ~=~H_\xi(x, V', p) \cdot p' +\dfrac{\sigma^2x^2}{2}\cdot p'',\\[10pt]

  v(x)~=~\mathrm{argmin}_{w \geq 0 } \set{ c(v) - wx V'(x)} \,,
    \end{cases}
\end{equation}
with the boundary conditions
\[
V(0)~=~0,\qquad V(x^*)~=~B\qquad \textrm{and }\qquad p(0)~=~1,\qquad p(x^*)~=~\theta(x^*),
\]
motivated by an optimal debt management problem in infinite time horizon with exponential discount.  As in \cite{BN, BJ, BMNP,  CGN, AK, NT}, this modeled as a noncooperative interaction between a borrower and a pool of risk-neutral lenders. Here, the independent variable $x$ is the debt-to-income ratio, $x^*$ is a threshold of the  debt-to-income ratio where the borrower  must declare bankruptcy,  the salvage function $\theta\in [0,1]$ determines the fraction of capital that can be recovered by lenders when bankruptcy occurs, and 
\begin{itemize}
\item $V$ is the value function for the
borrower who is a sovereign state that can decide the devaluation rate of its currency $v$  and the fraction of its income $u$ which is used to repay the debt,
\item $p$ is the  discounted rate at which the lenders buy bonds to offset the possible loss of part of their investment.
\end{itemize} 
Since $p$ is determined by the expected evolution of the  debt-to-income ratio at all future times, it depends globally on the entire feedback controls $u$ and $v$. This leads to a highly nonstandard optimal control problem, and  a ``solution'' must be understood as a Nash equilibrium, where the strategy implemented by the borrower represents the best reply to the strategy adopted by the lenders, and conversely.    
\medskip

For the stochastic model ($\sigma>0$), the authors proved in \cite{AK} the existence of an equilibrium solution  $(V_{\sigma},p_{\sigma})$ as a steady state of an auxiliary parabolic system. The proof requires a careful analysis to construct an invariant domain and apply a fixed-point result to derive the existence of a steady state for the auxiliary parabolic system. Moreover, they also established the upper (lower) bound of discounted bond price $p_{\sigma}$ and the expected total optimal cost for servicing the debt $V_{\sigma}$. Here, a natural question is  trying to understand whether a solution  exists and  its structure remains unchanged in the deterministic case ($\sigma=0$). A classical approach for a solution in this case is the vanishing viscosity method. More precisely, one studies the  limit $(V_\sigma,p_\sigma)\to (V,p)$ as  the diffusion coefficient $\sigma\to 0+$ and show that this limit   the limits $(V_{\sigma},p_{\sigma})$ yields a solution to  (\ref{HJ0}) with $\sigma=0$. However, this is a highly  nontrivial problem and still remains open. 
\medskip

The present paper aims to provide a direct study  to the deterministic case of the system (\ref{HJ0})  by looking at the  corresponding system of differential inclusions
\begin{equation}\label{DI}
\begin{cases}
V'(x)&\in~\left\{F^-(x,V(x),p(x)), F^+(x,V(x),p(x))\right\} \\
p'(x)&\in~\left\{G^-(x,V(x),p(x)), G^+(x,V(x),p(x))\right\}
\end{cases}
\end{equation}
where $F^{\pm}(x,V(x),p(x))$ solves the equation $rV(x)= H(x,\xi,p(x))$ with variable $\xi$, and 
\[
G^{\pm}(x,\eta,p)~=~\dfrac{(r+\lambda+v^*(x,F^{\pm}(x,\eta,p)))p-(r+\lambda)}{H_{\xi}(x,F^{\pm}(x,\eta,p),p)}.
\]
In Theorem \ref{thm:main2}, we first construct a solution $(V,p)$ of (\ref{DI}) with boundary conditions by a suitable concatenation of backward solutions, and then determine an equilibrium solution  to the corresponding differential game with deterministic dynamics. Moreover, we show that there exists a semi-equilibrium point $x_1\in [0,x^*[$  such that if the debt-ratio-income less than $x_1$, then the optimal strategy will reach a
steady state, otherwise bankruptcy in finite time is unavoidable. In our construction, the main technical difficulties in the analysis stem from the fact that, the system (\ref{DI}) is not monotone and $F^{\pm}$ are just H\"older continuous at points where $H_{\xi}$ vanishes. Moreover, $p(\cdot)$ may well have many discontinuities $x_k$. At these points,  backward solutions is not necessarily unique and does not a detail analysis. Thereafter,  in Proposition \ref{Pz}, using the the analysis of sub- and super-solutions, we study in an asymptotic behaviour of $(V,p)$ as the maximum debt-to-income threshold $x^*$ is pushed to $+\infty$. Consequently,
\begin{itemize}
\item if the salvage rate decay sufficiently slowly, i.e., the lenders can still recover a sufficiently high fraction of their investment after the bankruptcy, then the best choice for the borrower is to implement the Ponzi's scheme;
\medskip
\item otherwise, if the salvage rate $\theta(x^*)$ decays sufficiently fast, then Ponzi's scheme is no longer an optimal solution for the borrower;
\item for sufficiently large initial debt-to-income and bankruptcy threshold and recovery fraction after bankruptcy, the optimal strategy for the borrower will use currency devaluation $v$ to deflate the debt-to-income.
\end{itemize}
The remainder of the paper is organized as follows. In Section 2, we provide a more detailed description of the model and the  system of Hamilton-Jacobi equation satisfied by $(V,p)$, and study basic properties of  $H$. In Section 3, we  construct a solution to (\ref{HJ0}) with $\sigma=0$, and then derive an equilibrium solutions to the model of optimal debt management. In Section 4,  we  perform  a detailed analysis of the behavior of the optimal feedback controls  as $x^*\to\infty$. We close by an appendix which contains some concepts of convex analysis and collect some further technical results related to the Hamiltonian function.
\section{Model derivation and system of Hamilton-Jacobi equations} 
\setcounter{equation}{0}
\subsection{A deterministic optimal debt management problem} 
In this subsection, we shall recall our deterministic optimal debt management problem with currency devaluation with exponential discount in \cite{AK}. Here,  the borrower is a sovereign state, that can decide to devaluate its currency, and its total income $Y(t)$ and total debt $X(t)$ are  governed by the control dynamics
\begin{equation}\label{XY-system}
\begin{cases}
\dot{Y}(t)&=~(\mu +v(t)) Y(t),\cr\cr
\dot{X}(t)&=~-\lambda X(t) + \dfrac{(\lambda + r) X(t) - U(t)}{p(t)},
\end{cases} 
\end{equation}
where $\mu$ is the average growth rate of the economy, $\lambda$ is the rate at which the borrower pays back the principal, $r$ is the interest rate paid on bonds, and 
\begin{itemize}
\item $U(t)$ is  the rate of payments that the borrower chooses to make to the lenders at time $t$;
\item $v(t)\geq 0$ is the devaluation rate at time $t$, regarded as an additional control;
\end{itemize}
We define the debt-to-income ratio $x\doteq \ds{X\over Y}$, and set $u\doteq\ds {U\over Y}$. The system (\ref{XY-system}) yields 
\begin{equation}\label{eq:debt-GDP-ev}
\dot{x}(t)~=~\left(\dfrac{\lambda+r}{ p(t)}-\lambda-\mu- v(t)\right)x(t)-\dfrac{u(t)}{p(t)}.
\end{equation}
In this model, the borrower is forced to declare bankruptcy when the debt-ratio-income $x$ reaches threshold $x^*$. The bankruptcy time is denoted by 
\begin{equation}\label{Tb}
T_b~\doteq~\inf\{t>0: x(t)=x^*\}~\in~\R\cup \{+\infty\}.
\end{equation}
Without the presence of the devaluation of currency ($v=0$), when a foreign  investor buys a bond of unit nominal value, he will receive a continuous stream of
payments with intensity $(r+\lambda)e^{-\lambda t}$. If bankruptcy never occurs, the payoff for a foreign investor (exponentially discounted in time) is
\[
\Psi~=~\int_{0}^{\infty}e^{-rt}\cdot (r+\lambda)e^{-\lambda t}dt~=~1.
\]
Otherwise, the lenders recover only a fraction $\theta(x^*)\in[0,1]$ of their outstanding capital. In this case, taking account of  the presence of  the devaluation of currency, the pay of for a foreign investor will be
 \[
 \Psi~=~\int_{0}^{T_b}(r+\lambda)\cdot \exp\left\{\ds-\int_{0}^{t}(r+\lambda+v(s))ds\right\}dt+\exp\left\{\ds-\int_{0}^{T_b}(r+\lambda+v(s))ds\right\}\cdot \theta(x^*).
 \]
 If the outstanding capital is recovered in full (i.e., $\theta(x^*)=1$) and $v=0$, then again $\Psi=1$. In general, however, $\theta(x^*)<1, v\neq 0$, and thus $\Psi<1$. To offset this possible loss,  the investors buy a bond with unit nominal value at a discounted price
 \begin{equation}\label{p}
 p(t)~=~\int_{0}^{T_b}(r+\lambda)\cdot \exp\left\{\ds-\int_{0}^{t}(r+\lambda+v(s))ds\right\}dt+\exp\left\{\ds-\int_{0}^{T_b}(r+\lambda+v(s))ds\right\}\cdot \theta(x^*).
 \end{equation}
  Given an initial size $x_0$ of the debt-to-income ratio, the borrower wants to find a pair of optimal controls $(u,v)$ which minimizes his total expected cost, exponentially discounted in time:
 \begin{equation}\label{J}
\textrm{ minimize~} J~\doteq~\int_0^{T_b} e^{-rt}[L(u(t))+c(v(t))]\, dt + e^{-r T_b} B
\end{equation}
where $c(v)$ is the social cost resulting from devaluation, $L(u)$ is the cost to the borrower for putting income towards paying the debt, and $B$ is the cost of bankruptcy.
Throughout the paper we shall assume the following structural conditions on the cost functions $L$, $c$:
 \begin{enumerate}
\item[\textbf{(A1)}] \emph{The implementing cost function $L$ is twice continuously differentiable for  $u\in [0,1[$, and satisfies}
\[L(0)~=~0,\quad  L'(u)~>~0, \quad L''(u)~>~0~~\textrm{and }\lim_{u\to 1^-} L(u)~=~+\infty.\]
\item[\textbf{(A2)}] \emph{The social cost $c$ is twice continuously differentiable for $v\in [0,+\infty[$,and satisfies}
\[
c(0)~=~0,\quad c'(v)~>~0, \quad c''(v)~>~0~~\mathrm{and}~~\lim_{v\to\infty} c(v)~=~+\infty.
\]
\end{enumerate}
\subsection{System of first order Hamilton-Jacobi equations} 
 The   control system (\ref{eq:debt-GDP-ev})--(\ref{p}) is not standard. Indeed, the discount price $p$ in (\ref{p}) depends on the debt-to-income ratio not only at the present time $t$ but also at all future times. Here, we are mainly interested in construct optimal controls $(u^*,v^*)$ in feedback form:
\[
(u,v)~=~(u^*(x),v^*(x))\quad\mathrm{for}~ x\in [0,x^*].
\]

\begin{definition}[\textbf{Equilibrium solution in feedback form}]\label{def:eqsol} 
\emph{A couple of piecewise Lipschitz continuous  functions $(u^*(\cdot),v^*(\cdot))$ and l.s.c. 
$p^*(\cdot)$ provide an  equilibrium solution to the debt management problem (\ref{eq:debt-GDP-ev})-(\ref{J}), with 
continuous value function $V^*(\cdot)$, if
\begin{itemize}
\item[(i)] Given the price $p^*=p^*(x)$, one has that  $V^*$ is the value function and $(u^*(x),v^*(x))$ is the optimal feedback control, in connection with the deterministic control problem
\begin{equation}\label{eq:cost-feed}
\textrm{ minimize: }\int_0^{T_b} e^{-rt}[L(u(t))+c(v(t))]\, dt + e^{-r T_b} B,
\end{equation}
subject to 
\begin{equation}\label{eq:cont-feed}
\dot{x}(t)~=~\left(\dfrac{\lambda+r}{p^*(x)}-\lambda-\mu-v(t)\right)x(t)-\dfrac{u(t)}{p^*(x)},\hspace{1cm} x(0)~=~x_0,
\end{equation}
where the time $T_b$ is determined by (\ref{Tb}).
\item[(ii)] Given the feedback control $(u^*(x),v^*(x))$ in (\ref{eq:cont-feed}), for every $x_0\in [0,x^*]$ one has
\begin{multline}
p^*(x_0)~=~\int_0^{T_b}(r+\lambda)\exp\left\{-\int_0^t \bigl(\lambda +r+v^*(x(s)\bigr)\,ds\right\}\,dt+\\
+\exp\left\{-\int^{T_b}_{0}(r+\lambda+v^*(x(t)))\,dt\right\}\cdot \theta(x^*).
\end{multline}
\end{itemize}}
\end{definition}
Under the assumptions \textbf{(A1)-(A2)}, the Hamiltonian function $H:[0,x^*]\times \mathbb R\times [0,1]\to \mathbb R$ 
associated to the dynamics \eqref{eq:debt-GDP-ev} and to the cost functions $L,c$ is defined by
\begin{equation}\label{eq:detHamil}
H(x,\xi,p)~:=~\min_{u\in [0,1]}\left\{L(u)-u\,\dfrac{\xi}{p}\right\}+\min_{v\geq 0}\Big\{c(v)-vx\xi\Big\}
+ \left(\dfrac{\lambda+r}{p}-\lambda-\mu\right)x\,\xi.
\end{equation}
The Debt Management Problem  leads to the following implicit system of first order ODEs satisfied by the value
function $V$ and the  discounted rate $p$
\begin{equation}\label{eq:Sdode}
\begin{cases}
rV(x)~=~H(x,V'(x),p(x))\\[3mm]
(r+\lambda+v(x))p(x)-(r+\lambda)~=~H_{\xi}(x,V'(x),p(x))\cdot p'(x)\\[3mm]
v(x)~=~\displaystyle\underset{\omega\geq 0}{\mathrm{argmin}}\{c(\omega)-\omega xV'(x)\}
\end{cases}\end{equation}
with the boundary conditions
\begin{align}\label{eq:boundary}
V(0)~=~0,\qquad V(x^*)~=~B\qquad \textrm{and }\qquad p(0)~=~1,\qquad p(x^*)~=~\theta(x^*).
\end{align}
\subsection{Basic properties of $H$ and normal form of the system}
In this subsection, we  present some basic properties of the Hamiltonian function which will be  used to provide a semi-explicit formula for the optimal feed back strategy $(u^*,v^*)$. Let $L^\circ,c^\circ$ are  the convex conjugate of $L$ and $c$ (see in the Appendix for the notation).  We have that  
\begin{equation}\label{eq:detHamil-2}
-H(x,\xi,p)~\doteq~L^\circ\left(\dfrac{\xi}{p}\right)+c^\circ(x\xi)-\left(\dfrac{\lambda+r}{p}-\lambda-\mu\right)x\,\xi,
\end{equation}
and the map $\xi\mapsto -H(x,\xi,p)$ is convex and lower semicontinuous. Moreover, given $x>0$, $p\in ]0,1]$, $\xi\ge 0$, 
we denote by $u^*(\xi,p)\in [0,1]$ and $v^*(x,\xi)\in [0,+\infty[$ the unique elements of 
$\partial L^\circ\left(\dfrac{\xi}{p}\right)$ and $\partial c^\circ(x\xi)$, respectively, provided by Lemma \ref{lemma:subdiffcost}.
\begin{align*}
u^*(\xi,p)&~\doteq~\underset{u\in[0,1]}{\mathrm{argmin}}\left\{L(u) - u\dfrac{\xi}{p}\right\}~=~\begin{cases}0&\textrm{ if }0\le \xi< pL'(0)\cr
 (L')^{-1}(\xi/p)&\textrm{ if }\xi\ge pL'(0)>0
 \end{cases}
\end{align*}
and
\begin{align*}
v^*(x,\xi)&~\doteq~\underset{v\geq 0}{\mathrm{argmin}}\Big\{c(v)-vx\xi\Big\}~=~\begin{cases}0&\textrm{ if }0\le x\xi< c'(0)\cr (c')^{-1}(x\xi)&\textrm{ if }x\xi\ge c'(0)>0.\end{cases}
\end{align*}
It is clear that 
\begin{itemize}
\item for every $p\in]0,1]$ the map $\xi\mapsto u^*(\xi,p)$ is strictly increasing in $[pL'(0),+\infty[$, and $u^*(\cdot, p)\equiv 0$ in $[0,pL'(0)]$; 
\item for every $\xi\ge 0$ the map $p\mapsto u^*(\xi,p)$ is strictly decreasing in $[\xi/L'(0),1]$, and $u^*(\xi, \cdot)\equiv 0$ in $[0,\xi/L'(0)]$; 
\item for every $\xi>0$ the map $x\mapsto v^*(x,\xi)$ is strictly increasing in $[c'(0)/\xi,+\infty[$, and $v^*(\cdot,\xi)\equiv 0$ in $[0,c'(0)/\xi]$;
\item for every $x>0$ the map $\xi\mapsto v^*(x,\xi)$ is strictly increasing in $[c'(0)/x,+\infty[$, and $v^*(x,\cdot)\equiv 0$ in $[0,c'(0)/x]$.
\end{itemize}
From  Lemma \ref{lemma:Hamgrad}, the gradient of the Hamiltonian function $H(\cdot)$ can be expressed in terms of 
$u^*(\xi,p)$ and $v^*(x,\xi)$ at any point $(x,\xi,p)\in [0,+\infty[\times[0,+\infty[\times]0,1]$ by 
\begin{equation}\label{eq:gradH}\begin{cases}
\displaystyle H_{x}(x,\xi,p)&=~ \displaystyle \Big[(\lambda+r)-p(\lambda +\mu +v^*(x,\xi))\Big]\cdot \frac{\xi}{p}\\
\displaystyle H_{\xi}(x,\xi,p)&=~ \displaystyle \frac{1}{p}\cdot \Big[x\big((\lambda+r)-p(\lambda +\mu +v^*(x,\xi))\big)-u^*(\xi,p)\Big]\\
\displaystyle H_{p}(x,\xi,p)&=~ \displaystyle (u^*(\xi,p)-x(\lambda +r))\cdot \frac{\xi}{p^2}.
\end{cases}
\end{equation}
The following Lemma will catch some relevant properties of $H(\cdot)$ needed to study the system \eqref{eq:Sdode}.

\begin{lemma}\label{lemma:nonincmax}
Let $x\geq 0$ and $0<p\le 1$ be fixed, and set
\[H^{\max}(x,p)~\doteq~ \max_{\xi\geq 0}H(x,\xi,p).\] 
Then 
\begin{enumerate}
\item there exists $\xi^\sharp(x,p)>0$ such that, given $\eta>0$, the equation $r\eta=H(x,\xi,p)$ admits
\begin{itemize}
\item no solutions $\xi\in[0,+\infty)$ if $r\eta> H^{\max}(x,p)$,
\item $\xi^{\sharp}(x,p)$ as unique solution  if $r\eta=H^{\max}(x,p)$, 
\item exactly two distinct solutions $\{F^{-}(x,\eta,p),F^{+}(x,\eta,p)\}$ 
with \[0~<~F^{-}(x,\eta,p)~<~\xi^{\sharp}(x,p)~<~F^{+}(x,\eta,p)\] if $0<r\eta<H^{\max}(x,p)$,
\end{itemize}
\item we extend the definition of $\eta\mapsto F^{\pm}(x,\eta,p)$ by setting 
\[F^{\pm}\left(x,\dfrac{1}{r}\,H^{\max}(x,p),p\right)~=~\xi^{\sharp}(x,p),\]
thus for fixed $x>0$ , $p\in ]0,1]$, the maps $\eta\mapsto F^{-}(x,\eta,p)$ and $\eta\mapsto F^{+}(x,\eta,p)$
are respectively strictly increasing and strictly decreasing in $\left[0,\displaystyle\dfrac{H^{\max}(x,p)}{r}\right]$.
\item for all $0<\eta<H^{\max}(x,p)/r$ with $x>0$ and $p\in]0,1]$, we have 
\[\dfrac{\partial}{\partial \eta} F^{\pm}(x,\eta,p)~=~\dfrac{r}{H_\xi(x,F^{\pm}(x,\eta,p),p)},\]
\item The map $p\mapsto H^{\max}(x,p)$ is strictly decreasing on $]0,1]$ for every fixed $x\in ]0,x^*[$.
\end{enumerate}
\end{lemma}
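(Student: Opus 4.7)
The approach is to exploit the concavity-and-vanishing structure of $H(x,\cdot,p)$ that follows from \eqref{eq:detHamil-2}. From assumptions \textbf{(A1)}--\textbf{(A2)} and Lemma \ref{lemma:subdiffcost}, both $L^{\circ}$ and $c^{\circ}$ are strictly convex and lower semicontinuous, so $\xi \mapsto -H(x,\xi,p)$ is strictly convex. Moreover, the identities $L^{\circ}(0)=-L(0)=0$, $c^{\circ}(0)=-c(0)=0$ give $H(x,0,p)=0$, while $L^{\circ}(\xi/p)\to +\infty$ superlinearly as $\xi\to+\infty$ (since $L(u)\to +\infty$ as $u\to 1^-$) dominates the linear term in $\xi$, forcing $H(x,\xi,p)\to -\infty$. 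Finally, the formula for $H_\xi$ in \eqref{eq:gradH} evaluated at $\xi = 0$ (where $u^*=v^*=0$) gives $H_\xi(x,0,p)=\frac{x}{p}[(\lambda+r)-p(\lambda+\mu)]>0$ under the standing assumption $r>\mu$, hence $H(x,\cdot,p)$ strictly increases from $0$ on a right neighbourhood of $\xi = 0$.

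These facts together with strict concavity immediately yield part (1): the continuous strictly concave map $H(x,\cdot,p)$ attains a unique maximum at some $\xi^{\sharp}(x,p)>0$, with $H^{\max}(x,p)>0$, and the preimage $\{\xi\ge 0 : H(x,\xi,p)=r\eta\}$ has cardinality $0,1,2$ according to whether $r\eta$ lies above, at, or strictly below $H^{\max}(x,p)$. Part (2) is then a direct consequence of strict concavity: $H_\xi>0$ on $[0,\xi^\sharp)$ and $H_\xi<0$ on $(\xi^\sharp,+\infty)$, so the smaller root $F^{-}(x,\eta,p)$ sits on the strictly increasing branch and the larger $F^{+}(x,\eta,p)$ on the strictly decreasing branch; raising $\eta$ raises the level $r\eta$ and hence forces $F^-$ to the right and $F^+$ to the left, with both meeting at $\xi^\sharp$ when $r\eta=H^{\max}$.

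For part (3), I apply the implicit function theorem to the relation $r\eta=H(x,F^{\pm}(x,\eta,p),p)$ on the open set $0<\eta<H^{\max}(x,p)/r$, where by part (2) we have $H_\xi(x,F^{\pm},p)\ne 0$; differentiating in $\eta$ gives $r=H_\xi\cdot\partial_\eta F^\pm$, yielding the claimed formula.

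Part (4) is the most delicate step and will be handled via an envelope argument. Since $\xi^\sharp(x,p)$ is the interior maximiser of the concave $C^1$ function $H(x,\cdot,p)$, the envelope theorem gives $\frac{d}{dp}H^{\max}(x,p)=H_p(x,\xi^\sharp(x,p),p)$. Using the third line of \eqref{eq:gradH}, $H_p(x,\xi,p)=(u^*(\xi,p)-x(\lambda+r))\cdot \xi/p^2$. The critical-point condition $H_\xi(x,\xi^\sharp,p)=0$ from the second line of \eqref{eq:gradH} reads $u^*(\xi^\sharp,p)=x(\lambda+r)-px(\lambda+\mu+v^*(x,\xi^\sharp))$, which is strictly less than $x(\lambda+r)$ because $p>0$, $x>0$ and $\lambda+\mu+v^*\ge \lambda+\mu>0$. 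Combined with $\xi^\sharp>0$, this yields $H_p(x,\xi^\sharp,p)<0$, proving strict monotonicity. The only mild technical point is justifying the envelope identity; this is handled by standard Danskin/maximum-theorem arguments once one notes that $\xi^\sharp(x,p)$ is the unique (hence continuous in $p$) maximiser and $H$ is $C^1$ in $p$, so any subtlety is routine.
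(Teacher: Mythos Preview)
Your approach is essentially the same as the paper's: concavity of $\xi\mapsto H(x,\xi,p)$ together with $H(x,0,p)=0$, $H_\xi(x,0,p)>0$, and $H\to-\infty$ gives parts (1)--(2); the inverse/implicit function theorem gives (3); and the envelope identity $\frac{d}{dp}H^{\max}=H_p(x,\xi^\sharp,p)$ combined with the critical-point relation gives (4).

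One small inaccuracy worth fixing: $L^\circ$ and $c^\circ$ are \emph{not} strictly convex. By Lemma~\ref{lemma:subdiffcost}, $(L^\circ)'(\rho)=0$ for $\rho<L'(0)$ and $(c^\circ)'(\rho)=0$ for $\rho<c'(0)$, so both functions are affine (in fact identically zero) on a half-line, and consequently $\xi\mapsto H(x,\xi,p)$ is affine on $[0,\min\{pL'(0),c'(0)/x\}]$. The paper addresses this by observing that $H_\xi(x,\xi,p)$ is constant and positive on that initial interval, then strictly decreasing to $-\infty$, so it still has a unique zero $\xi^\sharp$ and $H$ is strictly increasing on $[0,\xi^\sharp]$, strictly decreasing on $[\xi^\sharp,\infty)$. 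Your conclusions in (1)--(2) remain valid once you replace ``strict concavity'' with this piecewise description; no other change is needed.
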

\begin{proof}
Since for all fixed $x>0$, $0<p\le 1$ we have that $\xi\mapsto H(x,\xi,p)$ is the minimum of a family of affine functions of $\xi$, 
we have that the map $\xi\mapsto H(x,\xi,p)$ is concave down. 
Recalling \eqref{eq:gradH}, and the monotonicity properties of $u^*(\cdot,p)$ and $v^*(x,\cdot)$, since
\begin{itemize}
\item $H_{\xi}(x,\xi,p)=H_{\xi}(x,0,p)>0,\textrm{ for all }\xi\in [0,\min\{pL'(0),c'(0)/x\}]$,
\item $\xi\mapsto H_{\xi}(x,\xi,p),\textrm{ is strictly decreasing for all }\xi>\min\{pL'(0),c'(0)/x\}$,
\item $\displaystyle\lim_{\xi\to+\infty}H_{\xi}(x,\xi,p)=-\infty$,
\end{itemize}
we have that $\xi\mapsto H_{\xi}(x,\xi,p)$ vanishes in at most one point in $[0,+\infty)$, 
so $\xi\mapsto H(x,\xi,p)$ reaches its maximum value $H^{\max}(x,p)$ on $[0,+\infty)$ at a unique point $\xi^{\sharp}(x,p)$,
moreover it is strictly increasing for $0<\xi<\xi^{\sharp}(x,p)$ and strictly decreasing for $\xi>\xi^{\sharp}(x,p)$, with 
$\xi^{\sharp}(x,p)\ge \min\{pL'(0),c'(0)/x\}$. We define 
\begin{itemize}
\item the strictly increasing map $\eta\mapsto F^-(x,\eta,p)$, for $0<\eta<\displaystyle {1\over r}\cdot H^{\max}(x,p)$, to be the inverse of $\xi\mapsto \displaystyle \frac 1r \cdot H(x,\xi,p)$ for $0<\xi<\xi^{\sharp}(x,p)$;
\item the strictly decreasing map $\eta\mapsto F^+(x,\eta,p)$, for $0<\eta<\displaystyle {1\over r}\cdot H^{\max}(x,p)$, to be the inverse of $\xi\mapsto \displaystyle \frac 1r \cdot H(x,\xi,p)$ for $\xi>\xi^{\sharp}(x,p)$.
\end{itemize}
This proves (1) and (2). Now, set
\begin{align*}
u^{\sharp}(x,p)\doteq u^*(\xi^{\sharp}(x,p),p),&&v^{\sharp}(x,p)\doteq v^*(x,\xi^{\sharp}(x,p)).
\end{align*}
From \eqref{eq:gradH}, it holds
\begin{align}
\label{eq:Rmax}u^{\sharp}(x,p)&~=~\big[(\lambda+r)-(\lambda+\mu+v^{\sharp}(x,p))p\big]\cdot x,\\
\label{eq:Hmax}H^{\max}(x,p)&~=~L(u^{\sharp}(x,p))+c(v^{\sharp}(x,p)).
\end{align}
Moreover, 
\begin{itemize}
\item if $\xi^{\sharp}(x,p)\ge pL'(0)$ then
\begin{equation}
\label{eq:Xisharp}
\xi^{\sharp}(x,p)~=~pL'(u^{\sharp}(x,p))~=~pL'\left(\big[(\lambda+r)-(\lambda+\mu+v^{\sharp}(x,p))p\big]\cdot x\right),
\end{equation}
\item if $\xi^{\sharp}(x,p)\ge \displaystyle \dfrac{c'(0)}{x}$ then
\begin{equation}
\xi^{\sharp}(x,p)~=~\dfrac{c'(v^{\sharp}(x,p))}{x}.
\end{equation}
\end{itemize}
Conversely, for any fixed $x\geq 0$ and $0<p\le 1$, if 
\[u^*(\xi,p)~=~x\big((\lambda+r)-p(\lambda +\mu +v^*(x,\xi))\big),\]
then $\xi=\xi^{\sharp}(x,p)$, $v^*(x,\xi)=v^\sharp(x,p)$ and $u^*(\xi,p)=u^\sharp(x,p)$.
Indeed, this follows from the fact that $H_\xi(x,\xi,p)=0$ iff $\xi=\xi^\sharp(x,p)$.
\par\medskip\par

Consider the equation $\eta=H(x,\xi,p)/r$ for a given $\eta>0$,
and, noticing that, given $0<\xi<\xi^{\sharp}(x,p)$ we have
\begin{align*}
F^{-}(x,\eta,p)~=~F^{-}\left(x,\dfrac{1}{r}H(x,\xi,p),p\right)~=~\xi\qquad\forall 0<\xi<\xi^{\sharp}(x,p),\\
F^{+}(x,\eta,p)~=~F^{+}\left(x,\dfrac{1}{r}H(x,\xi,p),p\right)~=~\xi\qquad\forall \xi^{\sharp}(x,p)>\xi,
\end{align*}
and so (3) follows from the Inverse Function Theorem. To prove item (4), we notice that
\[\dfrac{d}{dp} H^{\max}(x,p)~=~\dfrac{d}{dp}H(x,\xi^\sharp(x,p),p)~=~H_p(x,\xi^\sharp(x,p),p).\]
Recalling \eqref{eq:gradH}, we have
\begin{eqnarray*}
H_p(x,\xi^{\sharp}(x,p),p)&=&\left[{u}^{\sharp}(x,p)-(r+\lambda)x\right]\cdot \dfrac{\xi^{\sharp}(x,p)}{p}\\
&=&-(\lambda+\mu+v^{\sharp}(x,p))x\xi^\sharp(x,p)~<~0\,,
\end{eqnarray*}
since for $x,p\ne 0$ we have $\xi^\sharp(x,p)>0$.
\end{proof}
\medskip

\begin{definition}[Normal form of the system]{\it 
Given $(x,p)\in ]0,x^*]\times ]0,1]$ such that $0<r\eta\le H^{\max}(x,p)$ we define the maps
\begin{equation}\label{eq:pnormal}
G^{\pm}(x,\eta,p)~=~\dfrac{(r+\lambda+v^*(x,F^{\pm}(x,\eta,p)))p-(r+\lambda)}{H_{\xi}(x,F^{\pm}(x,\eta,p),p)}.
\end{equation}
Notice that if $rV(x)>H^{\max}(x,p)$, then the first equation of \eqref{eq:Sdode} has no solution.
Otherwise, if $0<rV(x)<H^{\max}(x,p)$ this equation splits into 
\begin{align*}
\begin{cases}V'(x)~=~F^{-}(x,V(x),p(x)),\cr\cr p'(x)~=~G^{-}(x,V(x),p(x)),\end{cases}&&\textrm{ or }&&
\begin{cases}V'(x)~=~F^{+}(x,V(x),p(x)),\cr\cr p'(x)~=~G^{+}(x,V(x),p(x)).\end{cases}
\end{align*}
}
\end{definition}
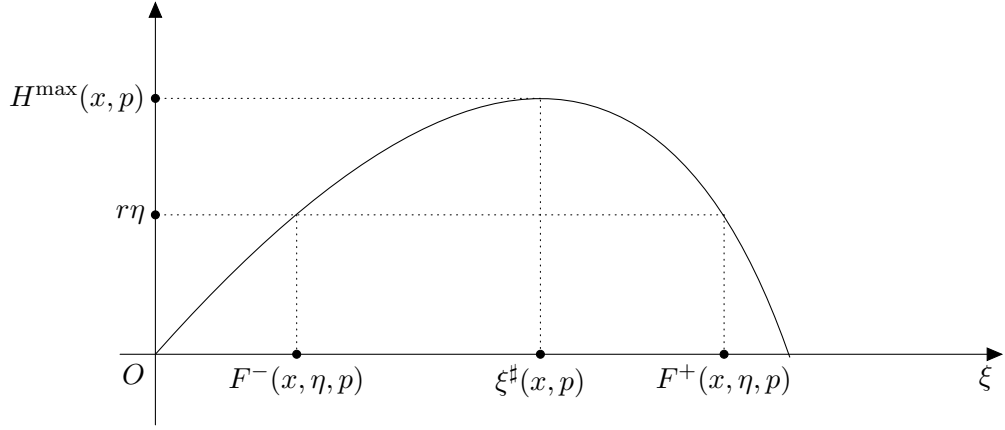
\begin{figure}[ht]
\centering
\begin{tikzpicture}[line cap=round,line join=round,>=triangle 45,x=0.3 \textwidth,y=0.3 \textwidth]
\draw[->,color=black] (-0.10,0) -- (2.40,0);\draw[->,color=black] (0,-0.20) -- (0,1);
\clip(-0.7,-0.20) rectangle (2.40,1.2);
\draw [samples=50,rotate around={-15:(0,0)},domain=0:1.737)] plot (\x,{1-(\x-1)^2});
\draw [dotted] (0.40,0.395)-- (0.40,0);\draw [dotted] (1.09,0.72)-- (1.09,0);
\draw [dotted] (1.61,0.395)-- (1.61,0);\draw [dotted] (1.61,0.395)-- (0,0.395);
\draw [dotted] (1.09,0.725)-- (0,0.725);
\node [below, color=black] at (0.40,0) {$F^{-}(x,\eta,p)$};
\node [below, color=black] at (1.09,0) {$\xi^\sharp(x,p)$};

\node [below, color=black] at (1.61,0) {$F^{+}(x,\eta,p)$};
\node [anchor=north east, color=black] at (0,0) {$O$};
\draw [color=black, fill=black] (0.40,0) circle (1.5pt);
\draw [color=black, fill=black] (1.09,0) circle (1.5pt);
\draw [color=black, fill=black] (1.61,0) circle (1.5pt);
\node [left, color=black] at (0,0.395) {$r\eta$};
\node [left, color=black] at (0,0.725)  {$H^{\max}(x,p)$};
\draw [color=black, fill=black] (0,0.395) circle (1.5pt);
\draw [color=black, fill=black] (0,0.725) circle (1.5pt);
\node [anchor=north east, color=black] at (2.40,0) {$\xi$};
\end{tikzpicture}
\label{fig:g53}
\caption{For $x\ge 0$, $p\in]0,1]$, the function $\xi\mapsto H(x,\xi,p)$ has a unique global maximum
$H^{\max}(x,p)$ attained at $\xi=\xi^\sharp(x,p)$. For $0<r\eta\leq H^{\max}$, the values $F^-(x,\eta, p)\leq \xi^\sharp(x,p)\leq F^+(x,\eta,p)$ are well defined.
Moreover, $F^{\pm}(x,\frac1rH^{\max}(x,p),p)=\xi^\sharp(x,p)$.}
\end{figure}
\begin{remark} 
Recalling \eqref{eq:debt-GDP-ev} and \eqref{eq:Rmax}, we observe that 
\begin{itemize}
\item The value $V'(x)=F^+(x,V(x),p)\geq \xi^\sharp(x,p)$ corresponds to the choice of an optimal control such that $\dot x(t)<0$. The total debt-to-ratio is decreasing.
\medskip
\item The value $V'(x)=F^-(x,V(x),p)\leq \xi^\sharp(x,p)$ corresponds to the choice of an optimal control such that $\dot x(x)>0$. The total debt-to-ratio is increasing.
\medskip
\item When $rV(x)= H^{\max}(x,p)$, then the value 
\[V'(x)~=~F^+(x,V(x),p)~=~F^-(x,V'(x),p)~=~\xi^\sharp(x,p)\] 
corresponds to the unique control strategy such that $\dot x(t)=0$. 
\end{itemize}
\end{remark}

\begin{remark}
We notice that if $0\le x\xi<\min\{xpL'(0),c'(0)\}$, since $u^*=v^*=0$, we have
\[\xi~=~F^{-}(x,\eta,p)~=~\dfrac{pr\eta}{(\lambda+r-p(\lambda+\mu))x},\]
in particular, if $0\le x\xi<\min\{xpL'(0),c'(0)\}$ we have that $\eta\mapsto F^{-}(x,\eta,p)$ is Lipschitz continuous, uniformly for $(x,p)\in [x_1,x^*]\times [p_1,1]$, 
for all $x_1\in]0,x^*]$, $p_1\in ]0,1]$.
If $x\xi>\min\{xpL'(0),c'(0)\}$, we have instead
\[H_{\xi\xi}(x,\xi,p)~\le~-\dfrac{1}{p}\cdot\min\left\{\dfrac{1}{p L''(u^*(\xi,p))}, \dfrac{x^2p}{c''(v^*(x,\xi))}\right\}.\]
\end{remark}
\begin{lemma}\label{lemma:holder}
Given $x_1\in]0,x^*]$, $p_1\in ]0,1]$, there exists a constant $C=C(x_1,p_1)$ such that
\[|F^{-}(x,\eta_1,p)-F^{-}(x,\eta_2,p)|~\le~ C\cdot |\eta_1-\eta_2|^{1/2},\]
for all $x\in [x_1,x^*]$, $p\in [p_1,1]$, $0<\eta_1,\eta_2\le \displaystyle \frac {1}{r} H^{\max}(x,p)$.
\end{lemma}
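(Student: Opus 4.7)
My strategy is to establish a uniform quadratic lower bound of the form
$H_\xi(x,s,p)\ge c_0\,(\xi^\sharp(x,p)-s)$
on the whole interval $[0,\xi^\sharp(x,p)]$, and then to integrate it across $[\xi_1,\xi_2]$. Swapping indices if necessary I assume $\eta_1<\eta_2$, and set $\xi_i\doteq F^-(x,\eta_i,p)$; by Lemma \ref{lemma:nonincmax}(2) these satisfy $0<\xi_1<\xi_2\le \xi^\sharp(x,p)$ and $H(x,\xi_i,p)=r\eta_i$.

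The heart of the argument is a lower bound on $-H_{\xi\xi}$. Differentiating the gradient formula \eqref{eq:gradH} using the piecewise expressions for $u^*(\xi,p)$ and $v^*(x,\xi)$ gives
\[
-H_{\xi\xi}(x,\xi,p)\;=\;\dfrac{x^{2}}{c''(v^*(x,\xi))}\,\chi_{\{x\xi>c'(0)\}}\;+\;\dfrac{1}{p^{2}L''(u^*(\xi,p))}\,\chi_{\{\xi>pL'(0)\}}.
\]
By compactness of $[x_1,x^*]\times[p_1,1]$, continuity of $(x,p)\mapsto (u^\sharp,v^\sharp,\xi^\sharp)$, and the identities $u^\sharp=x[(\lambda+r)-p(\lambda+\mu+v^\sharp)]\in[0,1)$ and $\xi^\sharp=pL'(u^\sharp)$ from the proof of Lemma \ref{lemma:nonincmax}, the quantities $u^\sharp,v^\sharp,\xi^\sharp$ stay uniformly bounded (with $u^\sharp$ strictly away from $1$). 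Consequently $L''(u^*)$ and $c''(v^*)$ stay uniformly bounded above on the relevant set, so there exists $c_1>0$ such that $-H_{\xi\xi}(x,\xi,p)\ge c_1$ whenever $\xi>\min\{pL'(0),c'(0)/x\}$; integrating from $\xi$ up to $\xi^\sharp$ yields $H_\xi(x,\xi,p)\ge c_1(\xi^\sharp-\xi)$ in this regime. On the complementary range $\xi\in[0,\min\{pL'(0),c'(0)/x\}]$ the remark preceding the statement shows $H_\xi(x,\xi,p)=x(\lambda+r-p(\lambda+\mu))/p$ is constant and bounded below by some $c_2>0$; combined with a uniform upper bound $\xi^\sharp\le\xi_{\max}$ this produces $H_\xi\ge(c_2/\xi_{\max})(\xi^\sharp-\xi)$ there as well. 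Setting $c_0\doteq\min\{c_1,c_2/\xi_{\max}\}$ yields the desired uniform estimate on all of $[0,\xi^\sharp(x,p)]$.

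With the lower bound in hand, the H\"older estimate is immediate: since $H_\xi(x,\cdot,p)\ge 0$ on $[\xi_1,\xi_2]\subset[0,\xi^\sharp]$,
\[
r(\eta_2-\eta_1)\;=\;\int_{\xi_1}^{\xi_2}H_\xi(x,s,p)\,ds\;\ge\;c_0\int_{\xi_1}^{\xi_2}(\xi^\sharp-s)\,ds\;=\;\dfrac{c_0}{2}(\xi_2-\xi_1)\bigl[(\xi^\sharp-\xi_1)+(\xi^\sharp-\xi_2)\bigr]\;\ge\;\dfrac{c_0}{2}(\xi_2-\xi_1)^{2},
\]
where the last inequality uses $\xi^\sharp-\xi_1\ge\xi_2-\xi_1$ and $\xi^\sharp-\xi_2\ge0$. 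This gives $|\xi_2-\xi_1|\le\sqrt{2r/c_0}\,|\eta_2-\eta_1|^{1/2}$, proving the claim with $C(x_1,p_1)=\sqrt{2r/c_0}$. The main technical obstacle is the uniform separation $u^\sharp(x,p)\le u_{\max}<1$ and uniform upper bound on $\xi^\sharp(x,p)$ over the compact parameter set, without which $L''(u^*)$ would blow up and the constant $c_0$ would degenerate; these properties rely on continuity arguments combined with the implicit characterization of $\xi^\sharp$ via $H_\xi(x,\xi^\sharp,p)=0$.
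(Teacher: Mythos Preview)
Your proof is correct and follows a route that is close in spirit to, but organized differently from, the paper's. The paper argues by a case split on the location of $\xi$: in the ``linear'' region $x\xi<\min\{xpL'(0),c'(0)\}$ it computes $F^-$ explicitly and gets a Lipschitz (hence H\"older) bound, while in the ``curved'' region it bounds $-H_{\xi\xi}$ below and invokes the abstract Lemma~\ref{lemma:holder-gen}. Your approach instead derives a single linear lower bound $H_\xi(x,s,p)\ge c_0(\xi^\sharp-s)$ valid on the whole interval $[0,\xi^\sharp]$ and integrates once; this automatically covers the mixed situation where $\xi_1$ lies in the linear zone and $\xi_2$ in the curved zone, a case the paper's split does not address explicitly. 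Another genuine difference is how the lower bound on $-H_{\xi\xi}$ is obtained: the paper invokes a blanket hypothesis ``$L''\ge\delta_0$, $c''\ge\delta_0$'' (which, as written, points the inequalities the wrong way for the argument and is not among (A1)--(A2)), whereas you correctly argue that what is needed are \emph{upper} bounds on $L''(u^*)$ and $c''(v^*)$, and you supply them via compactness---continuity of $(x,p)\mapsto(u^\sharp,v^\sharp,\xi^\sharp)$ on $[x_1,x^*]\times[p_1,1]$ together with $u^\sharp<1$ keeps $u^*,v^*$ in compact ranges where $L'',c''$ are bounded. One minor imprecision: the identity $\xi^\sharp=pL'(u^\sharp)$ you cite from Lemma~\ref{lemma:nonincmax} holds only when $\xi^\sharp\ge pL'(0)$; it is not needed for your argument, since boundedness of $\xi^\sharp$ already follows from its continuity on the compact parameter box.
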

\begin{proof}
We distinguish two cases:
\begin{enumerate}
\item if $0\le x\xi<\min\{xpL'(0),c'(0)\}$, since $u^*=v^*=0$, we have
\[\xi~=~F^{-}(x,\eta,p)~=~\dfrac{pr\eta}{(\lambda+r-p(\lambda+\mu))x},\]
and so
\begin{align*}
|F^{-}(x,\eta_1,p)-F^{-}(x,\eta_2,p)|&~\le~\dfrac{pr}{(\lambda+r-p(\lambda+\mu))x} |\eta_1-\eta_2|\\
&~\le~\dfrac{\sqrt{2B}r}{(r-\mu)x_1} |\eta_1-\eta_2|^{1/2}.
\end{align*}
for all $x\in [x_1,x^*]$, $p\in [p_1,1]$, $0<\eta_1,\eta_2\le \displaystyle \frac {1}{r} H^{\max}(x,p)$.\par\medskip\par
\item If $x\xi>\min\{xpL'(0),c'(0)\}$, we have instead
\[H_{\xi\xi}(x,\xi,p)~\le~  -\dfrac{1}{p}\min\left\{\dfrac{1}{p L''(u^*(x,\xi,p))}, \dfrac{x^2p}{c''(v^*(x,\xi))}\right\},\]
thus, recalling that by assumption we have $L''(u)\ge \delta_0$ and $c''(v)\ge \delta_0$ for $0<u<1$ and $v\ge 0$,
we obtain
\[-H_{\xi\xi}(x,\xi,p)~\ge~ \dfrac{\min\{1,x_1^2p_1\}}{\delta_0}.\]
By applying Lemma \ref{lemma:holder-gen} to $f(\cdot)=-\displaystyle \frac{1}{r}H(x,\cdot,p)$, we have
\[|F^{-}(x,\eta_1,p)-F^{-}(x,\eta_2,p)|~\le~ \sqrt{\dfrac{2r\delta_0}{\min\{1,x_1^2p_1\}}}|\eta_2-\eta_1|^{1/2}.\]
\end{enumerate}
The proof is complete by choosing $C(x_1,p_1)\doteq \sqrt{\dfrac{2r\delta_0}{\min\{1,x_1^2p_1\}}}+\dfrac{\sqrt{2B}r}{(r-\mu)x_1}$.
\end{proof}
\section{An equilibrium solution to the Debt Management Problem}
In this section, we will provide a detail analysis on  the existence of a solution to the system of Hamilton-Jacobi equation \eqref{eq:Sdode} with boundary conditions \eqref{eq:boundary}
which yields an equilibrium solution to the Debt Management Problem  (\ref{eq:debt-GDP-ev})-(\ref{J}). A solution to will be constructed in the next following subsections.
\subsection{Constant strategies} 
We begin our analysis from the control strategies keeping the DTI constant in time, i.e., such that
the corresponding solution $x(\cdot)$ of \eqref{eq:debt-GDP-ev} is constant. In this case, there is no bankruptcy risk, i.e., $T_b=+\infty$.
\medskip

\begin{definition}[Constant strategies]\label{def:const-strat}{\it 
Let $\bar x>0$ be given.
We say that a pair $(\bar u,\bar v)\in [0,1[\times [0,+\infty[$ is a constant strategy for $\bar x$ if
\[\begin{cases}
\left[\left(\dfrac{\lambda+r}{\bar p}-\lambda-\mu -\bar v\right)\bar x - \dfrac{\bar u}{\bar p}\right]~=~0,\\ 
\bar p~=~\dfrac{r+\lambda}{r+\lambda + \bar v},
\end{cases}\]
where the second relation comes from taking $T_b=+\infty$ in \eqref{Tb}.}
\end{definition}
\par\medskip\par
From these equations, if a couple $(\bar u,\bar v)\in [0,1[\times [0,+\infty[$ is a constant strategy then it holds $(r+\lambda)(r-\mu)\bar x=(r+\lambda+\bar v)\bar u$. 
In this case, the borrower will never go bankrupt and thus the cost of this strategy in \eqref{J} is computed by 
\begin{align*}
\dfrac{1}{r}\cdot \Big[L(\bar u)+c(\bar v)\Big]&~=~\dfrac{1}{r}\cdot \left[L\left(\dfrac{(r+\lambda)(r-\mu)\bar x}{r+\lambda+\bar v}\right) + c\left(\bar v\right)\right]\\
&~=~\dfrac{1}{r}\cdot \left[L\left((r-\mu)\bar x\cdot\bar p\right) + c\left(\left(1-\dfrac{1}{\bar p}\right)(r+\lambda)\right)\right].\end{align*}
Notice that if $\bar x(r-\mu)>1$, since $0\le \bar u<1$ we must have $\bar v>1$ and $\bar p<1$, 
in particular if DTI is sufficiently large, every constant strategy needs to implement currency devaluation. 
A more precise estimate will be provided in Proposition \ref{prop:nondev}.

\medskip
We are now interested in the minimum cost of a strategy keeping the debt constant. To this aim, we first characterize the cost of a constant strategy in terms of the variables $x,p$.
\medskip

\begin{lemma}\label{lemma:cconvH} 
Given any $(x,p)\in ]0,+\infty[\times ]0,1]$, we have
\begin{equation}\label{eq:convH}H^{\max}(x,p)~=~\min\Big\{L(u)+c(v):\, u\in [0,1],\,v\ge 0,\,u=\big[(\lambda+r)-(\lambda+\mu+v)p\big]\cdot x\Big\}.
\end{equation}
Moreover, $(\hat u,\hat v)$ realizes the minimum in the right hand side of \eqref{eq:convH} if and only if
\[
\begin{cases}
\displaystyle c(\hat v)+px\hat v\xi^\sharp(x,p)&~=~\displaystyle\min_{\zeta\ge 0}\left\{px\xi^\sharp(x,p)\zeta+c(\zeta)\right\},\\ \\
\displaystyle L(\hat u)+\hat u\xi^\sharp(x,p)&~=~\displaystyle\min_{u\in[0,1]}\left\{\xi^{\sharp}(x,p)u+L(u)\right\}.
\end{cases}
\]
\end{lemma}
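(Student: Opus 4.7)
The plan is to prove the identity \eqref{eq:convH} by two matching inequalities and then extract the characterization of minimizers from the equality case.

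For the upper bound $\min\{\cdots\} \leq H^{\max}(x,p)$, I would test the right hand side with the canonical pair $(u^\sharp(x,p), v^\sharp(x,p))$: by \eqref{eq:Rmax} this pair satisfies the constraint, and by \eqref{eq:Hmax} it attains the value $H^{\max}(x,p)$. For the matching lower bound I would go back to the definition \eqref{eq:detHamil}; for any admissible $(u,v)$ and any $\xi \geq 0$,
\[
H(x,\xi,p) \leq L(u) - \frac{u\xi}{p} + c(v) - vx\xi + \left(\frac{\lambda+r}{p} - \lambda - \mu\right)x\xi = L(u) + c(v) + \frac{\xi}{p}\bigl[x((\lambda+r) - (\lambda+\mu+v)p) - u\bigr],
\]
and the bracket vanishes by the admissibility constraint. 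Hence $H(x,\xi,p) \leq L(u)+c(v)$ for every $\xi \geq 0$, and passing to the supremum over $\xi$ yields $H^{\max}(x,p) \leq L(u)+c(v)$.

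To characterize the minimizers, I would track equality in the above chain at $\xi = \xi^\sharp(x,p)$. Any minimizer $(\hat u, \hat v)$ satisfies $L(\hat u)+c(\hat v) = H^{\max}(x,p) = H(x,\xi^\sharp,p)$. Using the admissibility constraint to insert a vanishing $\xi^\sharp$-linear term on the left and expanding $H(x,\xi^\sharp,p)$ by its definition \eqref{eq:detHamil}, the identity rearranges to
\[
\Bigl[L(\hat u) - \frac{\hat u\,\xi^\sharp}{p}\Bigr] + \Bigl[c(\hat v) - \hat v\, x\, \xi^\sharp\Bigr] = \min_{u \in [0,1]}\Bigl\{L(u) - \frac{u\,\xi^\sharp}{p}\Bigr\} + \min_{\zeta \geq 0}\bigl\{c(\zeta) - \zeta\, x\, \xi^\sharp\bigr\}.
\]
Each summand on the left is at least the corresponding summand on the right, so equality forces both partial minima to be realized by $\hat u$ and $\hat v$, which is the content of the two conditions listed in the lemma. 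By strict convexity of $L$ and $c$ in (A1)--(A2) these minimizers are unique, namely $\hat u = u^*(\xi^\sharp,p) = u^\sharp$ and $\hat v = v^*(x,\xi^\sharp) = v^\sharp$. Conversely, if both partial minima are realized then admissibility is automatic from \eqref{eq:Rmax} and the total value equals $H^{\max}$ by \eqref{eq:Hmax}.

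I expect no serious obstacle; the only tactile step is the algebraic collapse of the $\xi$-dependent remainder under the admissibility constraint, after which everything reduces to the structural properties of $\xi^\sharp, u^\sharp, v^\sharp$ already recorded in Lemma \ref{lemma:nonincmax}.
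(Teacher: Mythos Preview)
Your argument is correct and follows a genuinely different route from the paper's. The paper recasts the constrained minimum as $\inf_{v}\{f(v)+g(\Lambda v)\}$ with $f=c$, $g(\cdot)=L(C(x,p)+\cdot)$, $\Lambda=-xp$, and then invokes the Fenchel--Rockafellar duality theorem (Theorem~4.2 in \cite{ET}) to identify the dual with $\sup_{\xi}H(x,\xi,p)=H^{\max}(x,p)$; the characterization of minimizers is then read off from the abstract extremality relations for Fenchel duality. You instead run a bare-hands weak-duality argument: exhibit the feasible pair $(u^\sharp,v^\sharp)$ from \eqref{eq:Rmax}--\eqref{eq:Hmax} for the upper bound, and for the lower bound observe that the feasibility constraint annihilates the $\xi$-linear remainder in \eqref{eq:detHamil}, so $H(x,\xi,p)\le L(u)+c(v)$ for every $\xi\ge 0$; then track equality at $\xi=\xi^\sharp$ and split. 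Your approach is shorter and avoids all convex-analysis machinery; the paper's approach has the merit of making the absence of a duality gap a consequence of a general theorem rather than a computation.

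One cosmetic point: the optimality conditions you actually derive are
\[
L(\hat u)-\dfrac{\hat u\,\xi^\sharp}{p}=\min_{u\in[0,1]}\Bigl\{L(u)-\dfrac{u\,\xi^\sharp}{p}\Bigr\},\qquad
c(\hat v)-\hat v\,x\,\xi^\sharp=\min_{\zeta\ge 0}\bigl\{c(\zeta)-\zeta\,x\,\xi^\sharp\bigr\},
\]
which pin down $\hat u=u^\sharp(x,p)$, $\hat v=v^\sharp(x,p)$. These differ in sign and in the scaling factor $1/p$ from the conditions displayed in the lemma; the displayed conditions, taken literally, would force $\hat u=\hat v=0$. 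Your version is the one that is consistent with \eqref{eq:Rmax}--\eqref{eq:Hmax} and with the paper's own use of the lemma, so you should regard the signs in the statement as typographical and record the conditions in the form you derived.
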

\begin{proof} 
Set $F(v):=f(v)+g(\Lambda v)$ where $f(\zeta)=c(\zeta)$ for $\zeta\ge 0$ and $f(\zeta)=+\infty$ if $\zeta<0$, $C(x,p)=\big[(\lambda+r)-(\lambda+\mu)p\big]\cdot x$,
$g(\zeta)=L(C(x,p)+\zeta)$ if $C(x,p)+\zeta\in [0,1]$
and $g(\zeta)=+\infty$ if $C(x,p)+\zeta\notin [0,1]$, and $\Lambda=-xp$.
By standard argument in convex analysis (see e.g. Theorem 4.2 and Remark 4.2 p. 60 of \cite{ET}), denoted by $f^\circ$, $g^\circ$ the convex conjugates of $f,g$ respectively, we have 
\begin{align*}
\inf_{v\in\mathbb R} F(v)&~=~\sup_{\nu\in\mathbb R}~\left[-f^\circ(\Lambda \nu)-g^\circ(-\nu)\right]\\
&~=~\sup_{\nu\in\mathbb R}~\left[\min_{\zeta\ge 0}\Big\{c(\zeta)+xp\nu\zeta\Big\}+\min_{C(x,p)+\zeta\in [0,1]}\Big\{L(C+\zeta)+\nu\zeta\Big\}\right]\\
&~=~\sup_{\nu\in\mathbb R}~\left[\min_{\zeta\ge 0}\Big\{c(\zeta)+xp\nu\zeta\Big\}+\min_{u\in [0,1]}\Big\{L(u)+\nu u\Big\}-C\nu\right]\\
&~=~\sup_{\xi\in\mathbb R}~\left[\min_{\zeta\ge 0}\Big\{c(\zeta)-x\xi\zeta\Big\}+\min_{u\in [0,1]}\Big\{L(u)-u\cdot\dfrac{\xi}{p}\Big\}+\dfrac{C(x,p)}{p}\cdot\xi\right]\\
&~=~\sup_{\xi\in\mathbb R}~H(x,\xi,p)=H^{\max}(x,p).
\end{align*}
Moreover, since $\displaystyle\sup_{\xi\in\mathbb R}H(x,\xi,p)$ is attained only at $\xi=\xi^\sharp(x,p)$ according to the strict concavity of $\xi\mapsto H(x,\xi,p)$, $(\hat u,\hat v)$
realizes the minimum in the right hand side of \eqref{eq:convH} if and only if
\[
\begin{cases}
f(\hat v)+f^\circ(\Lambda\xi^{\sharp}(x,p))-\Lambda\hat v\xi^\sharp(x,p)~=~0,\cr\cr
g(\Lambda \hat v)+g^\circ(-\xi^{\sharp}(x,p))+\Lambda \hat v \xi^\sharp(x,p)~=~0,
\end{cases}
\]
which implies $\hat v\ge 0$, $C(x,p)-px\hat v\in [0,1]$, and
\[
\begin{cases}
\displaystyle c(\hat v)+px\hat v\xi^\sharp(x,p)~=~\min_{\zeta\ge 0}~\{px\xi^\sharp(x,p)\zeta+c(\zeta)\},\cr\cr
\displaystyle L(C(x,p)-px\hat v)-px\hat v \xi^\sharp(x,p)~=~\min_{\nu\in\mathbb R}~\left\{\xi^{\sharp}(x,p)\nu+L(C(x,p)+\nu)\right\}.
\end{cases}
\]
The second relation can be rewritten as 
\[\displaystyle L(\hat u)+\hat u\xi^\sharp(x,p)~=~\min_{u\in[0,1]}\left\{\xi^{\sharp}(x,p)u+L(u)\right\},
\]
and this complete the proof.
\end{proof}
Formula \eqref{eq:convH} allows us to give a simpler characterization of the minimum cost of a strategy keeping the debt-to-income ratio constant in time.
Indeed, given $x\in [0,x^*]$, we select $(u(x),v(x))$ keeping the debt-to-income ratio constant in time. This defines uniquely a value $p=p(x)$ by Definition 
\ref{def:const-strat} and impose a relation between $u(x)$ and $v(x)$. 
Then we take the minimum over all the costs of such strategies, i.e., the right hand side of formula \eqref{eq:convH}.
This naturally leads to the following definition.
\medskip

\begin{definition}[Optimal cost for constant strategies]\label{def:optconst}\emph{
Given $x\in [0,x^*]$, we define
\[W(x)~=~\dfrac{1}{r}\cdot H^{\max}\left(x,p_c(x)\right)\]
where
\begin{equation}\label{eq:pcvc}
\begin{cases}p_c(x)~=~\dfrac{r+\lambda}{r+\lambda + v_c(x)},\\ \\
\displaystyle v_c(x)~=~\underset{v\ge 0}{\mathrm{argmin}}\left[L\left(\dfrac{(r+\lambda)(r-\mu)x}{r+\lambda+v}\right)+c(v)\right].\end{cases}
\end{equation}
For every $x\in [0,x^*]$, $W(x)$ denotes the minimum cost of a strategy keeping the DTI ratio constant in time.}
\end{definition}

The next results proves that if the debt-to-income ratio is sufficiently small, the optimal strategy keeping it constant does not use the devaluation of currency.
\medskip

\begin{proposition}[Non-devaluating regime for optimal constant strategies]\label{prop:nondev}
Let $x_c\ge 0$ be the unique solution of the following equation in $x$ 
\[(r+\lambda)c'(0)~=~(r-\mu)xL'\left((r-\mu)x\right).\]
Then
\begin{itemize}
\item for all $x\in [0,\min\{x_c,x^*\}]$ we have $W(x)=\dfrac{1}{r}\cdot L((r-\mu)x)$ and $p_c(x)=1$,
\item for all $x\in ]\min\{x_c,x^*\},x^*]$ we have 
\begin{align*}
W(x)&~=~\dfrac{1}{r}\left[L\left(\dfrac{(r+\lambda)(r-\mu)x}{r+\lambda+v_c(x)}\right)+c(v_c(x))\right],\\
p_c(x)&~=~\dfrac{r+\lambda}{r+\lambda + v_c(x)}<1,
\end{align*}
where $v_c(x)>0$ solves the following equation in $v$ 
\[c'(v)~=~\dfrac{(r+\lambda)(r-\mu)x}{(r+\lambda+v)^2}\cdot L'\left(\dfrac{(r+\lambda)(r-\mu)x}{r+\lambda+v}\right).\]
\item for every $x\in]0,x^*[$ we have
\begin{equation}\label{eq:k-slop}
W'(x)~=~\dfrac{r-\mu}{r} p_c(x) L'(p_c(x)(r-\mu)x)<\xi^\sharp(x,p_c(x)).
\end{equation}
\end{itemize}
\end{proposition}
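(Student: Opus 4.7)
The plan splits naturally into three parts: characterizing the minimizer $v_c(x)$ (and hence $p_c(x)$, $W(x)$), computing $W'(x)$ by the envelope theorem, and establishing the strict inequality $W'(x)<\xi^\sharp(x,p_c(x))$.

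For the first part, I set $\Phi(x,v):=L(g(x,v))+c(v)$ with $g(x,v):=(r+\lambda)(r-\mu)x/(r+\lambda+v)$. A direct computation yields
\[
\partial_v\Phi(x,v) \;=\; c'(v)-\frac{(r+\lambda)(r-\mu)x}{(r+\lambda+v)^2}\,L'(g(x,v)).
\]
By \textbf{(A1)--(A2)}, $c'$ is strictly increasing and the subtracted term is strictly decreasing in $v$ (as the product of strictly positive and strictly decreasing factors: the prefactor is obviously decreasing in $v$, and $L'(g(x,v))$ is decreasing because $g(x,\cdot)$ is decreasing and $L'$ is increasing). Hence $\partial_v\Phi(x,\cdot)$ is strictly increasing, tends to $+\infty$ as $v\to+\infty$, and admits a unique minimizer $v_c(x)\ge 0$. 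Evaluating at $v=0$ gives $\partial_v\Phi(x,0)\ge 0$ iff $(r+\lambda)c'(0)\ge(r-\mu)xL'((r-\mu)x)$, equivalently $x\le x_c$ (since the right-hand side is strictly increasing in $x$). This dichotomy produces the two cases of the proposition upon substitution into the definitions of $p_c$ and $W$; in particular, for $x\le x_c$ one has $v_c=0$, $p_c=1$, $W(x)=L((r-\mu)x)/r$, while for $x>x_c$, $v_c(x)>0$ is characterized by the stated interior first-order condition.

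For $W'(x)$, I apply the envelope theorem. In both regimes, $\partial_v\Phi(x,v_c(x))\cdot v_c'(x)=0$ (either by the FOC when $v_c>0$, or because $v_c\equiv 0$ on $[0,x_c]$ so $v_c'\equiv 0$), hence $rW'(x)=\partial_x\Phi(x,v_c(x))$. Computing $\partial_x\Phi=L'(g)\cdot(r+\lambda)(r-\mu)/(r+\lambda+v)$ and evaluating at $v=v_c(x)$ gives exactly the stated expression, since $(r+\lambda)/(r+\lambda+v_c(x))=p_c(x)$ and $g(x,v_c(x))=(r-\mu)xp_c(x)$.

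Finally, for $W'(x)<\xi^\sharp(x,p_c(x))$: since $\xi\mapsto H(x,\xi,p_c)$ is strictly concave with its unique maximum at $\xi^\sharp$, it is enough to show $H_\xi(x,W'(x),p_c(x))>0$. Using \eqref{eq:gradH} together with the identity $x\bigl((r+\lambda)-p_c(\lambda+\mu)\bigr)=u_c+xp_c v_c$ (which follows from $u_c=(r-\mu)xp_c$ and $p_c(r+\lambda+v_c)=r+\lambda$), one rewrites
\[
p_c\,H_\xi(x,\xi,p_c)=\bigl(u_c-u^*(\xi,p_c)\bigr)+xp_c\bigl(v_c-v^*(x,\xi)\bigr).
\]
At $\xi=W'(x)$, the relation $W'(x)/p_c(x)=\tfrac{r-\mu}{r}L'(u_c)<L'(u_c)$ together with the monotonicity of $u^*(\cdot,p_c)$ yields $u^*(W'(x),p_c)<u_c$, so the first term is strictly positive. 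The main obstacle will be controlling the second term, since $v^*(x,W'(x))$ may well exceed $v_c$; I expect to handle this by invoking the FOC $c'(v_c)=p_c^2(r-\mu)xL'(u_c)/(r+\lambda)$ to compare $xW'(x)$ with $c'(v_c)$ quantitatively, and then exploiting $p_c\le 1$ together with the inverse-function relations for $(L')^{-1}$ and $(c')^{-1}$ to show that the positive first term dominates. A careful bookkeeping of these estimates should deliver the strict positivity of $H_\xi$ at $\xi=W'(x)$, closing the proof.
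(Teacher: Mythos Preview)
Your first two parts are correct and coincide with the paper's argument: the paper likewise sets $F^x(v)=\frac{1}{r}[L(g(x,v))+c(v)]$, shows $\partial_v F^x$ is strictly increasing with $\partial_v F^x(v)\to+\infty$, splits on the sign of $\partial_v F^x(0)$ to obtain the dichotomy at $x_c$, and then gets $W'(x)$ by the envelope relation $W'(x)=\partial_x F^x(v_c(x))$.

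The gap is in your third part. You correctly reduce $W'(x)<\xi^\sharp(x,p_c(x))$ to $H_\xi(x,W'(x),p_c(x))>0$ and rewrite $p_c H_\xi=(u_c-u^*(W',p_c))+xp_c(v_c-v^*(x,W'))$; but you then stop at ``a careful bookkeeping of these estimates should deliver the strict positivity.'' That is not a proof, and the obstacle you flag is real: when $v_c>0$ the first-order condition gives
\[
c'(v_c)=\frac{(r+\lambda)(r-\mu)x}{(r+\lambda+v_c)^2}L'(u_c)=\frac{(r-\mu)p_c^2 x}{r+\lambda}L'(u_c),
\]
while $xW'(x)=\frac{r-\mu}{r}xp_c L'(u_c)$, so $c'(v_c)/(xW'(x))=rp_c/(r+\lambda)<1$. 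Hence $v^*(x,W'(x))>v_c$ and the second term is genuinely negative; dominating it by the first term requires a quantitative estimate you have not supplied.

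The paper's route for this inequality is different and avoids the competing-signs issue altogether. Instead of evaluating the sign of $H_\xi$ at $W'(x)$, it invokes the explicit formula \eqref{eq:Xisharp}, namely $\xi^\sharp(x,p)=p\,L'(u^\sharp(x,p))$ with $u^\sharp$ given by \eqref{eq:Rmax}, and argues that $\xi^\sharp(x,p_c(x))=p_c(x)L'\big(p_c(x)(r-\mu)x\big)$. Once this identification is in hand, the desired inequality is immediate, since $W'(x)$ equals that very quantity multiplied by the factor $\frac{r-\mu}{r}<1$. If you wish to keep your $H_\xi$-approach you must actually carry out the balancing; otherwise, switching to the direct computation of $\xi^\sharp$ via \eqref{eq:Xisharp}--\eqref{eq:Rmax} is the shorter path.
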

\begin{proof}
Given $x\in]0,x^*[$, we define the convex function 
\[F^x(v)~\doteq~\begin{cases}\dfrac{1}{r}\cdot \left[L\left(\dfrac{(r+\lambda)(r-\mu)x}{r+\lambda+v}\right)+c(v)\right],&\textrm{ if }v\ge 0,\,\dfrac{(r+\lambda)(r-\mu)x}{r+\lambda+v}\in [0,1],\cr\cr
+\infty,&\textrm{otherwise}.\end{cases}\]
We compute
\[
\dfrac{d}{dv}F^x(v)~=~\dfrac{1}{r}\cdot \left[c'(v)-L'\left(\dfrac{(r+\lambda)(r-\mu)x}{r+\lambda+v}\right)\dfrac{(r+\lambda)(r-\mu)x}{(r+\lambda+v)^2}\right],
\]
which is monotone increasing and satisfies $\displaystyle\lim_{v\to+\infty}\dfrac{d}{dv}F^x(v)=+\infty$,
\[
\dfrac{d}{dv}F^x(v)~\ge~\dfrac{d}{dv}F^x(0)~=~\dfrac{1}{r}\cdot \left[c'(0)-L'\left((r-\mu)x\right)\dfrac{(r-\mu)x}{r+\lambda}\right].
\]
Two cases may occur:
\begin{itemize}
\item If $\dfrac{d}{dv}F^x(0)\ge 0$, we have that $v=0$ realizes the minimum of $F$ on $[0,+\infty[$. This occours when $x\in [0,\min\{x_c,x^*\}]$ where 
$x_c$ is the unique solution of 
\[(r+\lambda)c'(0)~=~(r-\mu)xL'\left(\dfrac{(r+\lambda)(r-\mu)x}{r+\lambda}\right),\]
and it implies $W(x)=\dfrac{1}{r}\cdot L((r-\mu)x)$ and $p_c(x)=1$.\par\medskip\par
\item If we have $\min\{x_c,x^*\}<x\le x^*$, then there exists a unique point $v_c(x)>0$ such that $F'(v_c(x))=0$, and this point is characterized by 
\[
c'(v_c(x))~=~\dfrac{(r+\lambda)(r-\mu)x}{(r+\lambda+v_c(x))^2}\cdot L'\left(\dfrac{(r+\lambda)(r-\mu)x}{r+\lambda+v_c(x)}\right).
\]
The remaining statements follows noticing that for $\min\{x_c,x^*\}<x\le x^*$ we have
\begin{align*}
W'(x)&~=~\ \dfrac{\partial F^x}{\partial x}(v_c(x))+\dfrac{\partial}{\partial v}F^x(v_c(x)) \cdot v'_c(x)~=~\dfrac{\partial F^x}{\partial x}(v_c(x))\\
&~=~\ \dfrac{r-\mu}{r} p_c(x) L'(p_c(x)(r-\mu)x),
\end{align*}
and deriving the explicit expression of $W(x)$ for $[0,\min\{x_c,x^*\}]$ yields the same formula.
Notice that, by \eqref{eq:Xisharp}, we have
\begin{align*}
\xi^\sharp(x,p_c(x))&~=~p_c(x) L'\left(\big[(\lambda+r)-(\lambda+\mu+v^{\sharp}(x,p_c(x)))p_c(x)\big]\cdot x\right)\\
&~=~ p_c(x)L'\left(\left[(\lambda+r)-(\lambda+\mu+v^{\sharp}(x,p_c(x)))\cdot \dfrac{\lambda+r}{\lambda+r+v^{\sharp}(x,p_c(x))}\right]\cdot x\right)\\
&~=~ p_c(x) L'(p_c(x)(r-\mu)\cdot x)>W'(x),
\end{align*}
where we used the fact that $L'$ is strictly increasing and, since the argument of $L'$ must be nonnegative, we have
\[\dfrac{\lambda+r}{\lambda+\mu+v^{\sharp}(x,p_c(x))}~\ge~p_c(x),\]
\end{itemize}
and the proof is complete.
\end{proof}
\subsection{Existence of an equilibrium solution.} 
We are now ready to establish an existence result of a equilibrium solution to  the debt management problem  (\ref{eq:debt-GDP-ev}) - (\ref{J}). Before going to state our main theorem, we recall from Proposition \ref{prop:nondev} that $v_{c}$ is the unique solution to 
\[c'(v)~=~\dfrac{(r+\lambda)(r-\mu)x}{(r+\lambda+v)^2}\cdot L'\left(\dfrac{(r+\lambda)(r-\mu)x}{r+\lambda+v}\right),\]
and 
\[p_c(x^*)~=~\dfrac{r+\lambda}{r+\lambda + v_c(x^*)} ~<~1\,,\]
\begin{equation}\label{eq:w11}
W(x^*)~=~\dfrac{1}{r}\left[L\left(\dfrac{(r+\lambda)(r-\mu)x^*}{r+\lambda+v_c(x^*)}\right)+c(v_c(x^*))\right]\,.
\end{equation}

\begin{theorem}\label{thm:main2} 
Assume that the cost functions $L$ and $c$ satisfies the assumptions \textbf{(A1)-(A2)}, and moreover
\begin{equation}\label{eq:x-large}
W(x^*)~>~B\qquad\textrm{ and }\qquad \theta(x^*)~\le~ p_c(x^*).
\end{equation}
Then the debt management problem  (\ref{eq:debt-GDP-ev}) - (\ref{J}) admits an equilibrium solution $(u^*,v^*,p^*)$ associated to  Lipschitz continuous value functions $V^*$ in feedback form such that $p^*$ is decreasing, $V^*$ is strictly increasing and
\[
V^*(x)~\leq~W^*(x)\qquad\forall x\in [0,x^*].
\]
\end{theorem}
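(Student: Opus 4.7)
The plan is to construct $(V^*,p^*)$ by a backward concatenation of solutions to the differential inclusion \eqref{DI} starting from the terminal data $V^*(x^*)=B$, $p^*(x^*)=\theta(x^*)$, and then recover the feedback controls $(u^*,v^*)$ through the minimizers in \eqref{eq:detHamil-2} and the defining relation for $v^*$. The hypotheses in \eqref{eq:x-large} guarantee that the backward Cauchy problem is solvable at the terminal point: by Lemma \ref{lemma:nonincmax}(4), $p\mapsto H^{\max}(x^*,p)$ is strictly decreasing, so
\[
rB ~<~ rW(x^*) ~=~ H^{\max}(x^*,p_c(x^*)) ~\le~ H^{\max}(x^*,\theta(x^*)),
\]
and therefore $F^{\pm}(x^*,B,\theta(x^*))$ are both well defined and distinct.

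\textbf{Backward construction on $[x_1,x^*]$.} Near $x^*$ the appropriate branch is the one whose associated forward dynamics drive $x(t)\uparrow x^*$, so bankruptcy really occurs in finite time; by the second bullet of the remark following the normal-form definition, this is the $F^-$ branch, i.e., $V^{*\prime}=F^-(x,V^*,p^*)$, $p^{*\prime}=G^-(x,V^*,p^*)$. I would solve this backward system on a maximal interval $[x_1,x^*]$, where $x_1$ is the largest $x<x^*$ with $rV^*(x_1)=H^{\max}(x_1,p^*(x_1))$ (the point where $F^-$ and $F^+$ coalesce at $\xi^\sharp$), or $x_1=0$ if no such point exists. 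Classical ODE theory yields local existence and uniqueness while $H_\xi<0$ is bounded away from $0$, and Lemma \ref{lemma:holder} supplies the H\"older modulus needed to continue past points where $H_\xi\to 0$. An invariance argument — initialized by $\theta(x^*)\le p_c(x^*)$ and driven by the sign of $G^- - p_c'$ — will show that $p^*(x)\le p_c(x)$ as long as the $F^-$ branch is active, which together with monotonicity of $H^{\max}$ in $p$ gives $V^*(x)\le W(x)$ on $[x_1,x^*]$.

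\textbf{Extension to $[0,x_1]$ and boundary conditions.} On $[0,x_1]$ I would set $p^*(x)=p_c(x)$ and define $V^*$ by the steady-state profile induced by the optimal constant strategy of Definition \ref{def:optconst}, which by Proposition \ref{prop:nondev} yields $p_c(0)=1$ and $W(0)=0$, providing the left boundary data. The compatibility of this extension with the $F^-$ piece at $x_1$ hinges on the slope comparison \eqref{eq:k-slop}, $W'(x)<\xi^\sharp(x,p_c(x))$, which ensures that the constant-strategy profile matches the backward-$F^-$ profile at $x_1$ without violating the Hamilton--Jacobi equation in a viscosity sense. Globally, Lipschitz continuity of $V^*$ on each smooth piece follows from boundedness of $F^{\pm}$ together with Lemma \ref{lemma:holder} near singular points, strict monotonicity $V^{*\prime}>0$ is immediate from $F^{\pm}>0$, and the decreasing character of $p^*$ follows from the sign of $G^-$ on $[x_1,x^*]$ and of $p_c'$ on $[0,x_1]$.

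\textbf{Recovering the equilibrium and the main obstacle.} Once $(V^*,p^*)$ is built, I would define the feedback controls by $u^*(x)=u^*(V^{*\prime}(x),p^*(x))$ and $v^*(x)=v^*(x,V^{*\prime}(x))$ from the formulas preceding \eqref{eq:gradH}, and then verify Definition \ref{def:eqsol} in two steps: (i) a standard verification argument using the HJ equation $rV^*=H(x,V^{*\prime},p^*)$ to identify $V^*$ with the value function of the frozen-price control problem \eqref{eq:cost-feed}--\eqref{eq:cont-feed}; (ii) a direct check that integrating the closed-loop dynamics reproduces the bond-price integral in Definition \ref{def:eqsol}(ii). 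The hard part, as flagged in the introduction, is controlling the discontinuities of $p^*$ (possibly a countable set $\{x_k\}$) together with the merely H\"older regularity of $F^-$ at $\xi^\sharp$: at each such point the backward problem is non-unique, and one must select the solution compatible with lower semicontinuity of $p^*$ and with the Rankine--Hugoniot-type jump forced by the bond-price integral. I expect Lemma \ref{lemma:holder} to be the key quantitative tool for bounding the variation of $V^*$ across these singularities and thereby closing the construction.
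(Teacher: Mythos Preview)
Your first backward piece on $[x_1,x^*]$ is essentially the paper's construction, and the choice of the $F^-$ branch together with the terminal-data check via Lemma~\ref{lemma:nonincmax}(4) is correct. The genuine gap is your extension to $[0,x_1]$.

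Setting $V^*=W$ and $p^*=p_c$ on $[0,x_1]$ does \emph{not} produce a solution of the Hamilton--Jacobi system \eqref{eq:Sdode}. By definition $rW(x)=H^{\max}(x,p_c(x))=H(x,\xi^\sharp(x,p_c(x)),p_c(x))$, whereas Proposition~\ref{prop:nondev} gives $W'(x)<\xi^\sharp(x,p_c(x))$; since $\xi\mapsto H(x,\xi,p_c(x))$ is strictly increasing on $[0,\xi^\sharp]$, this yields
\[
H\bigl(x,W'(x),p_c(x)\bigr)~<~rW(x),
\]
so the first equation of \eqref{eq:Sdode} fails pointwise. Consequently your verification argument for Definition~\ref{def:eqsol}(i) breaks: with $V^*=W$ the running computation gives $\tfrac{d}{dt}\phi^{u,v}(t)\ge e^{-rt}\bigl[H(x,(V^*)',p^*)-rV^*\bigr]<0$, and you cannot conclude $\phi^{u,v}(0)\le\phi^{u,v}(T_b)$. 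In fact $W$ is strictly suboptimal on $]a(x_0),x_0[$: letting the DTI drift up toward $x_0$ costs $Z(\cdot,x_0)<W(\cdot)$ there.

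What the paper does instead is \emph{iterate} the backward construction. At the first touching point $x_1=x^*_W$ one restarts the backward system \eqref{eq:V-x0} with new terminal data $\bigl(W(x_1),p_c(x_1)\bigr)$---this is legitimate by Lemma~\ref{lemma:compW}, which shows $p_c(x_1)\ge\limsup_{x\to x_1^+}q(x)$---and obtains a second genuine HJ-solution piece $Z(\cdot,x_1)$ on $[x_2,x_1]$, where $x_2=a(x_1)$ is the next touching point with $W$. One then restarts again from $\bigl(W(x_2),p_c(x_2)\bigr)$, and so on. The key technical lemma you are missing is Lemma~\ref{lemma:bw}: it gives a uniform lower bound $x_k-a(x_k)\ge\delta_{x^\flat}>0$ on the length of each backward piece for $x_k>x^\flat$, so the iteration terminates in finitely many steps (once some $x_k\le x^\flat$, Case~1 shows the last piece reaches the origin with $q\equiv1$). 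The resulting $V^*$ is the concatenation of the $Z(\cdot,x_k)$, each of which solves \eqref{eq:Sdode} on its piece; $p^*$ is piecewise the corresponding $q(\cdot,x_k)$ and has downward jumps at the $x_k$ (this is the origin of the countable set of discontinuities you allude to at the end, but your single-switch construction does not produce them). The verification of Definition~\ref{def:eqsol}(i)--(ii) then goes through piecewise exactly as you outline.
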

Toward the proof of this theorem, we first study  basic properties of the backward solutions of the  system of implicit ODEs \eqref{eq:Sdode}. In fact, an equilibrium solution will be constructed by a suitable concatenation of backward solutions.
\subsubsection{Backward solutions}  We first define the backward solution to the system \eqref{eq:Sdode} starting from $x^*$.
%
\begin{definition}[Backward solution for $x^*$]{\it 
Let $x\mapsto (Z(x,x^*),q(x,x^*))$ be the backward solution of the system of ODEs
\begin{equation}\label{eq:VB}
\begin{cases}Z'(x)&~=~F^{-}(x,Z(x),q(x)),\\[4mm] q'(x)&~=~G^{-}(x,Z(x),q(x)), \end{cases}
\qquad\textrm{with}\qquad\begin{cases}Z(x^*)&~=~B\,,\\[4mm] q(x^*)&~=~\theta(x^*).\end{cases}
\end{equation}
with $H_\xi(x,F^{-}(x,Z(x),q(x)),q(x))\ne 0$.
}
\end{definition}
The following Lemma states some basic properties of the backward solution. In particular, the backward solution $Z(\cdot,x^*)$, starting from $B$ at $x^*$ with $W(x^*)<B$, 
survives backward at least  until the first intersection with the graph of $W(\cdot)$. Moreover, in this interval is monotone increasing and positive. In the same way, $q(\cdot,x^*)$ 
is always in $]0,1]$.
\medskip
\begin{proposition}\label{prop:pro1}[Basic properties of the backward solution]
Set
\[
x^*_{W}~:=~\begin{cases}0,&\textrm{ if }Z(x,x^*)<W(x)\textrm{ for all }x\in]0,x^*[,\cr\cr
\sup\{x\in ]0,x^*[:\, Z(x,x^*)\ge W(x)\},&\textrm{ otherwise }.\end{cases}
\]
Assume that 
\begin{equation}\label{eq:Asp1}
W(x^*)~>~B\qquad\textrm{ and }\qquad \theta(x^*)~<~\dfrac{r+\lambda}{r+\lambda+v^*(x^*,F^-(x^*,B,\theta(x^*))) }\,.
\end{equation}
Denote by $I_{x^*}\subseteq [0,x^*]$ the maximal domain of the backward equation \eqref{eq:VB}, 
define $y(x)$ to be the maximal solution of
\[\begin{cases}
\dfrac{dy}{dx}(x)&~=~\dfrac{1}{H_{\xi}\left(x, Z'(x,x^*),q(x,x^*)\right)},\\  y(x^*)&~=~0,   
\end{cases}\]
and let $J_{x^*}$ the intersection of its domain with $[0,x^*]$. 
Then
\begin{enumerate}
\item $I_{x^*}\supseteq J_{x^*}\supseteq ]x^*_{W},x^*[$;
\item $Z(\cdot,x^*)$ is strictly monotone increasing in $]x^*_{W},x^*[$, and $Z(x,x^*)>0$ for all $x\in ]x^*_{W},x^*]$;
\item $q(x,x^*)\in ]0,1]$ for all $x\in ]x^*_{W},x^*]$. 
\end{enumerate}
\end{proposition}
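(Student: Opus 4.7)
The approach is to regularize the singular backward system \eqref{eq:VB} by treating $y$ as the independent parameter, which removes the singularity at points where $H_\xi=0$. The first step is to rewrite the equations in $y$-coordinates: using $dx/dy=H_\xi$ together with $dZ/dx=F^-$ and the definition of $G^-$, one obtains
\begin{align*}
\frac{dx}{dy} &~=~ H_\xi(x, F^-(x,Z,q), q),\\
\frac{dZ}{dy} &~=~ F^-(x,Z,q)\cdot H_\xi(x, F^-(x,Z,q), q),\\
\frac{dq}{dy} &~=~ \bigl(r+\lambda+v^*(x, F^-(x,Z,q))\bigr)\,q - (r+\lambda),
\end{align*}
with initial data $(x,Z,q)(0) = (x^*, B, \theta(x^*))$. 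On the open set $\Omega \doteq \{(x,Z,q):\, x>0,\, q\in\, ]0,1],\, 0 < rZ < H^{\max}(x,q)\}$ the right-hand side is continuous (in fact H\"older in $Z$, by Lemma \ref{lemma:holder}), so Peano's existence theorem furnishes a local solution. Since $H_\xi>0$ on the trajectory, the map $y\mapsto x(y)$ is strictly monotone; inverting it yields a solution of \eqref{eq:VB}, and so $I_{x^*}\supseteq J_{x^*}$.

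Next, the plan is to extend this solution backward to obtain $J_{x^*}\supseteq\, ]x^*_W, x^*[$ via a standard continuation argument. Suppose the $y$-solution can only be continued up to some $y_0<0$ with $x(y_0)=\hat x>x^*_W$; to rule this out, one must show that $(x,Z,q)$ stays in a compact subset of $\Omega$ as $y\to y_0$. The definition of $x^*_W$ yields $Z(x)<W(x)=H^{\max}(x,p_c(x))/r$ on $]x^*_W, x^*[$, and by the strict monotonicity of $p\mapsto H^{\max}(x,p)$ from Lemma \ref{lemma:nonincmax} (4), this gives the crucial invariant $rZ(x)<H^{\max}(x,q(x))$ \emph{once} one has verified $q(x)\le p_c(x)$. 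Establishing this latter bound is the main obstacle of the proof: it requires tracking the coupled $(Z,q)$ dynamics and exploiting the initial-derivative assumption $\theta(x^*) < (r+\lambda)/(r+\lambda+v^*(x^*, F^-(x^*, B, \theta(x^*))))$ to show that $q$ never rises above $p_c$ along the trajectory.

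Finally, I would read off the qualitative properties (2) and (3) from the regularized system on $]x^*_W, x^*[$. Since $0<rZ(x)<H^{\max}(x,q(x))$, Lemma \ref{lemma:nonincmax} (1) gives $F^-(x,Z,q)\in\, ]0,\xi^\sharp(x,q)[$, so $Z'(x)=F^->0$, proving strict monotonicity of $Z$. Positivity follows since $Z(x^*)=B>0$ and strict monotonicity, combined with $F^-(x,0,q)=0$ (because $H(x,0,q)=0$ by $L(0)=c(0)=0$), prevent $Z$ from crossing zero. For $q$, the linear-in-$q$ equation has state-dependent equilibrium $q_{\rm eq} = (r+\lambda)/(r+\lambda+v^*)\in\, ]0,1]$; a classical barrier comparison traps $q$ in $]0,1]$ since $dq/dy|_{q=0}=-(r+\lambda)<0$ prevents $q$ from reaching $0$ from above when $y$ decreases, and $dq/dy|_{q=1}=v^*\ge 0$ prevents $q$ from exceeding $1$ when $y$ decreases, while $\theta(x^*)<q_{\rm eq}(0)$ guarantees $q$ moves into the interior immediately at $y=0$.
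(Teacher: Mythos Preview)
Your overall framework---regularize by passing to the $y$-variable and read off (2) and (3) from the resulting autonomous-looking system---is exactly the paper's device, and your barrier arguments for $q\in\,]0,1]$ and $Z>0$ are essentially correct (the paper derives the slightly sharper explicit bounds $Z(x)\ge Be^{ry(x)}$ and $q(x)\ge\theta(x^*)e^{(r+\lambda+v^*)y(x)}$ from the concavity inequality $dZ/dy\le rZ$ and the linear ODE for $q$, but your formulation works too).

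The genuine gap is in the continuation step for item (1). You correctly locate the crux: one must show that $H_\xi(x,Z'(x),q(x))>0$, equivalently $rZ(x)<H^{\max}(x,q(x))$, throughout $]x^*_W,x^*[$. But your proposed route---prove $q(x)\le p_c(x)$ and combine with $Z<W$ and Lemma~\ref{lemma:nonincmax}(4)---is neither carried out nor, apparently, what actually holds: the inequality $q\le p_c$ is established in the paper only at the touching point $x^*_W$ (Lemma~\ref{lemma:compW}), not along the whole trajectory, and there is no evident mechanism forcing it pointwise. The paper's argument is different and proceeds in two steps. First, a second-derivative contradiction shows that $q'\le 0$ along the trajectory, i.e.\ $q(x)\le (r+\lambda)/(r+\lambda+v^*(x,Z'(x)))$: if $q'(x_1)=0$ with $q''(x_1)<0$ at the first such point, differentiating the $q$-equation and using the $Z$-equation to compute $Z''(x_1)>0$ forces $q''(x_1)>0$, a contradiction (the case $v^*=0$ being handled separately and trivially). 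Second, if $H_\xi=0$ at some $x_2\in\,]x^*_W,x^*[$, then $Z'(x_2)=\xi^\sharp(x_2,q(x_2))$ and $rZ(x_2)=H^{\max}(x_2,q(x_2))$; the bound from the first step now reads $q(x_2)\le(r+\lambda)/(r+\lambda+v^\sharp(x_2,q(x_2)))$, and via \eqref{eq:Rmax}--\eqref{eq:Hmax} this is exactly what is needed to estimate $H^{\max}(x_2,q(x_2))\ge H^{\max}(x_2,p_c(x_2))=rW(x_2)$, contradicting $x_2>x^*_W$. So the correct invariant is the monotonicity of $q$, not the comparison $q\le p_c$, and your proposal leaves this central step open.
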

\begin{proof} \textbf{1.} We first claim that $q(\cdot,x^*)$ is non-increasing on $J_{x^*}\bigcap ]x^*_{W},x^*[$ and thus
\begin{equation}\label{eq:de-p}
q'(x,x^*)~=~\dfrac{[r+\lambda+v^*(x,Z'(x,x^*))]\cdot q(x,x^*)-(r+\lambda)}{H_{\xi}(x,Z'(x,x^*),q(x,x^*))}~\le~0\quad\textrm{ for all }x\in J_{x^*}\cap ]x^*_{W},x^*[\,.
\end{equation}
By contradiction, assume that there exists $x_1\in J_B\cap ]x_{BW},x^*[$  such that 
\begin{equation}\label{eq:as1}
q'(x_1,x^*)~=~\dfrac{[r+\lambda+v^*(x_1,Z'(x_1,x^*))]\cdot q(x_1,x^*)-(r+\lambda)}{H_{\xi}(x_1,Z'(x,x^*),q(x,x^*))}~=~0,\quad q''(x_1,x^*)~<~0\,.
\end{equation}
This yields 
\[r+\lambda~=~[r+\lambda+v^*(x_1,Z'(x_1,x^*))]\cdot q(x_1,x^*),\qquad q(x_1,x^*)~>~0.\]
Two cases are considered:
\begin{itemize}
\item if $x_1Z'(x_1,x^*)\leq c'(0)$ then, recalling the monotonicity of $Z'(\cdot,x^*)$, we have that \newline $xV'(x,x^*)\leq c'(0)$ for all $x\in J_{x^*}\cap ]x^*_{W},x^*[$ 
satisfying $x\le x_1$, and so
\[v^*(x,Z'(x,x^*))~=~0\quad \textrm{ for all }x\in J_{x^*}\cap ]x^*_{W},x^*[~\textrm{with}~x\le x_1.\]
Thus, $q(x_1,x^*)=1$ and 
\[
q'(x,x^*)~=~\dfrac{[r+\lambda]\cdot [q(x,x^*)-1]}{H_{\xi}(x,Z'(x,x^*),q(x,x^*))}\quad\forall x\in J_{x^*}\cap ]x^*_{W},x^*[~\textrm{with}~x\le x_1.
\]
This implies that  $q(x,x^*)=1$ for all $x\in J_{x^*}\cap ]x^*_{W},x^*[$ with $x\le x_1$. In particular, we have $q''(x_1,x^*)=0$, which yields a contradiction.
\item If $x_1Z'(x_1,x^*)>c'(0)$ then 
\[\dfrac{d}{dx}(v^*(x_1,Z'(x_1,x^*)))~=~\dfrac{Z''(x_1,x^*)x_1+Z'(x_1,x^*)}{c''(x_1Z'(x_1,x^*))}~>~0.\]
From the first equation of \eqref{eq:Sdode} and \eqref{eq:gradH}, it holds
\begin{align*}
rZ'(x_1,x^*)&~=~H_x(x_1,Z',q)+H_{\xi}(x_1,Z',q)\cdot Z''(x_1,x^*)+H_{p}(x_1,Z,q)\cdot q'(x_1,x^*)\\
&~=~\Big[(\lambda+r)-q(x_1,x^*)(\lambda +\mu +v^*(x,Z')\Big]\cdot \frac{Z'}{q}+H_{\xi}(x_1,Z',q)\cdot Z''(x_1,x^*)\\
&~=~(r-\mu)\cdot Z'(x_1,x^*)+H_{\xi}(x_1,Z',q)\cdot Z''(x_1,x^*).
\end{align*}
Observe that  $Z'(x_1,x^*)>0$ and $H_{\xi}(x_1,Z'(x_1,x^*),q(x_1,x^*))>0$,  one obtains that 
\[
Z''(x_1,x^*)~=~\dfrac{\mu Z'(x_1,x^*)}{H_{\xi}(x_1,Z'(x_1,x^*),q(x_1,x^*))}~>~0\,.
\]
Taking the derivative respect to $x$ in both sides of the second equation of \eqref{eq:Sdode}, we have 
\begin{multline*}
\left [r+\lambda+(v^*(x,Z'(x,x^*))\right]\cdot q'(x,x^*)+q(x,x^*)\cdot\dfrac{d}{dx}v^*(x,Z'(x,x^*))\\
~=~q''(x,x^*)H_{\xi}(x,Z'(x,x^*),q(x,x^*))+q'(x,x^*)\dfrac{d}{dx}H_{\xi}(x,Z'(x,x^*),q(x,x^*))\,.
\end{multline*}
Recalling \eqref{eq:as1}, we obtain that 
\begin{equation}\label{eq:p''}
q''(x_1,x^*)~=~\dfrac{q(x_1,x^*)}{H_{\xi}(x_1,Z',q)}\cdot \dfrac{d}{dx}v^*(x_1,Z'(x_1,x^*))~>~0.
\end{equation}
and it yields a contradiction.
\end{itemize}
\par\medskip\par

\noindent Assume that there exists $x_2\in J_{x^*}\cap ]x^*_{W},x^*[$ such that $H_{\xi}(x_2,Z'(x_2,x^*),q(x_2,x^*))=0$. Then  
\[
\xi^{\sharp}(x_2,q(x_2,x^*))~=~Z'(x_2,x^*), \qquad Z(x_2,x^*)~=~\dfrac{1}{r}\cdot H^{\max}(x_2,q(x_2,x^*)),
\]
and
\[
u^{\sharp}(x_2,q(x_2,x^*))~=~\big[(\lambda+r)-(\lambda+\mu+v^{\sharp}(x_2,q(x_2,x^*)))q(x_2,x^*)\big]\cdot x_2\,.
\]
Since $q(x_2,x^*)\leq \dfrac{r+\lambda}{r+\lambda+v^{\sharp}(x,Z'(x_2,x^*))}$, we estimate
\begin{align*}
H^{\max}&(x_2,q(x_2,x^*))~=~L(u^{\sharp}(x_2,q(x_2,x^*)))+c(v^{\sharp}(x_2,q(x_2,x^*)))\\
&~=~L\left(\big[(\lambda+r)-(\lambda+\mu+v^{\sharp}(x_2,q(x_2,x^*)))q(x_2,x^*)\big]\cdot x_2\right)+c(v^{\sharp}(x_2,q(x_2,x^*)))\\
&~\geq~L\left(\dfrac{r+\lambda(r-\mu)x_2}{\lambda+\mu+v^{\sharp}(x_2,q(x_2,x^*))}\right)+c(v^{\sharp}(x_2,q(x_2,x^*)))\\
&~\geq~H^{\max}(x_1,p_c(x_2)).
\end{align*}
Thus, 
\[
Z(x_2,x^*)~=~\dfrac{1}{r}\cdot H^{\max}(x_2,q(x_2,x^*))~\geq~\dfrac{1}{r}\cdot H^{\max}(x_2,p_c(x_2))~=~W(x_2),
\]
and this yields a contradiction.
\medskip

\textbf{2.} By construction, $y(\cdot)$ is strictly monotone and invertible in $]x^*_{W},x^*]$, let $x=x(y)$ be its inverse, from the inverse function theorem we get
\[\begin{cases}
\dfrac{d}{dy}~Z(x(y),x^*)&~=~Z'(x(y),x^*)\cdot H_{\xi}\left(x(y),Z'(x(y),x^*),q(x(y),x^*)\right),\\ \\
\dfrac{d}{dy}~q(x(y),x^*)&=~q'(x(y),x^*)\cdot H_{\xi}\left(x(y),Z'(x(y),x^*),q(x(y),x^*)\right).
\end{cases}\]
Since the map $\xi\mapsto H(x,\xi,q)$ is concave, it holds
\[H_{\xi}(x,0,q(x,x^*))~\ge~H_{\xi}(x,\xi,q(x,x^*))~\ge~H_{\xi}\left(x,Z'(x,x^*),q(x,x^*)\right),\]
for all $\xi\in \left[0,Z'(x,x^*)\right]$. Thus,
\begin{align*}
r Z(x(y),x^*)&~=~H\left(x(y), Z'(x(y),x^*),q(x(y),x^*)\right)~=~\int_0^{Z'(x(y),x^*)}H_{\xi}(x,\xi,q(x(y),x^*))\,d\xi\\
&~\ge~Z'(x(y),x^*)\cdot H_{\xi}(x,Z(x(y),x^*),q(x(y),x^*))~=~\dfrac{d}{dy}Z(x(y),x^*),
\end{align*}
and this implies that 
 $$
 Z(x,x^*)~\ge~ Be^{ry(x)}~>~0\qquad\forall x\in ]x^*_{W},x^*].
 $$
With a similar argument for $q(\cdot,x^*)$, we obtain
\[
\big[r+\lambda+v^*(x(y),Z'(x(y),x^*)\big]\cdot q(x(y),x^*)-(r+\lambda)~=~\dfrac{d}{dy}q(x(y),x^*)).
\]
Hence,
\[(r+\lambda)(q(x(y),x^*)-1)~\le~\dfrac{d}{dy}q(x(y),x^*)~\le~ \big[r+\lambda+v^*(x(y),Z'(x(y),x^*)\big]\cdot q(x(y),x^*),
\]
and  this yields 
\[
q(x,x^*)~\le ~1\qquad\mathrm{and}\qquad q(x,x^*)~\ge~\theta(x^*)\cdot e^{(r+\lambda+v^*(x,Z'(x,x^*))y(x)}~>~0
\]
for all $x\in I_{x^*}\cap[0,x^*]$. In particular,  $q(x,x^*)\in ]0,1]$ for all $x\in ]x^*_{W},x^*]$.
\end{proof}
\par\medskip\par
As far as the graph of $Z(\cdot,x^*)$ intersects the graph of $W(\cdot)$,  $Z(\cdot,x^*)$ is no longer optimal. The following lemma is to investigate  the local behavior of $Z(\cdot,x^*)$ and $W(\cdot)$ near to an intersection of their graphs.

\begin{lemma}[Comparison between optimal constant strategy and backward solution]\label{lemma:compW}
Let $I\subseteq ]0,x^*[$ be an open interval, $(Z,q):I\to [0,+\infty[\times]0,1[$ be a backward solution, and $\bar{x}\in \bar I$. 
If 
\[
\lim_{\substack{x\to \bar x\\ x\in I}}Z(x)~=~W(\bar x)
\]
then $p_c(\bar x)\ge \displaystyle\limsup_{\substack{I\ni x\to\bar x}} q(x)$ and $W'(x)< F^{-}(x,W(x),p_c(x))$.
\end{lemma}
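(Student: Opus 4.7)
The plan is to pass to the limit $x \to \bar x$ along $I$ in the backward ODE $r Z(x) = H(x, Z'(x), q(x))$ with $Z'(x) = F^{-}(x, Z(x), q(x))$, and to compare the outcome with the defining identity $r W(\bar x) = H^{\max}(\bar x, p_c(\bar x))$.

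For the first inequality, set $q^{*} := \limsup_{I \ni x \to \bar x} q(x)$. If $q^{*} = 0$ the claim is trivial since $p_c(\bar x) > 0$, so I may assume $q^{*} > 0$ and pick a sequence $x_n \in I$ with $x_n \to \bar x$ and $q(x_n) \to q^{*}$. Since $Z(x_n) \to W(\bar x)$ and $Z'(x_n) = F^{-}(x_n, Z(x_n), q(x_n)) \le \xi^\sharp(x_n, q(x_n))$ with $\xi^\sharp$ continuous, the values $Z'(x_n)$ stay bounded; extracting a further subsequence I may assume $Z'(x_n) \to \xi_\infty$ with $\xi_\infty \le \xi^\sharp(\bar x, q^{*})$. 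Passing to the limit in $r Z(x_n) = H(x_n, Z'(x_n), q(x_n))$ by continuity of $H$ yields $r W(\bar x) = H(\bar x, \xi_\infty, q^{*})$, and the uniqueness on the lower branch from Lemma \ref{lemma:nonincmax}(1) identifies $\xi_\infty$ with $F^{-}(\bar x, W(\bar x), q^{*})$. In particular $r W(\bar x) \le H^{\max}(\bar x, q^{*})$, while $r W(\bar x) = H^{\max}(\bar x, p_c(\bar x))$ by Definition \ref{def:optconst}; the strict monotonicity of $p \mapsto H^{\max}(\bar x, p)$ in Lemma \ref{lemma:nonincmax}(4) then forces $q^{*} \le p_c(\bar x)$.

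For the second inequality, since $r W(\bar x) = H^{\max}(\bar x, p_c(\bar x))$, part (2) of Lemma \ref{lemma:nonincmax} applied at $\eta = W(\bar x)$, $p = p_c(\bar x)$ gives
\[
F^{-}(\bar x, W(\bar x), p_c(\bar x)) \;=\; \xi^\sharp(\bar x, p_c(\bar x)).
\]
The strict inequality $W'(\bar x) < \xi^\sharp(\bar x, p_c(\bar x))$, which is precisely \eqref{eq:k-slop} in Proposition \ref{prop:nondev}, then immediately yields $W'(\bar x) < F^{-}(\bar x, W(\bar x), p_c(\bar x))$.

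The main obstacle is the passage to the limit in the first part: the limiting triple $(\bar x, W(\bar x), q^{*})$ may sit exactly on the boundary $r W(\bar x) = H^{\max}(\bar x, q^{*})$, where the branches $F^{\pm}$ collapse onto $\xi^{\sharp}$ and $F^{-}$ is only Hölder-$1/2$ in $\eta$ (Lemma \ref{lemma:holder}). Working at the level of the sequence $Z'(x_n)$, using its boundedness via $\xi^{\sharp}$ and identifying any subsequential limit through uniqueness on the lower branch, sidesteps the need for joint continuity of $F^{-}$ at that boundary; once this is done, the second inequality is mere bookkeeping between Definition \ref{def:optconst}, Lemma \ref{lemma:nonincmax}(2), and Proposition \ref{prop:nondev}.
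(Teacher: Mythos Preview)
Your proof is correct and follows essentially the same route as the paper: for the first inequality you compare $rW(\bar x)=H^{\max}(\bar x,p_c(\bar x))$ with $H^{\max}(\bar x,q^{*})$ via $H\le H^{\max}$ and invoke the strict monotonicity of $p\mapsto H^{\max}(\bar x,p)$ from Lemma~\ref{lemma:nonincmax}(4); for the second you combine $F^{-}(\bar x,W(\bar x),p_c(\bar x))=\xi^\sharp(\bar x,p_c(\bar x))$ with \eqref{eq:k-slop}. The only difference is cosmetic: the paper bypasses your subsequence extraction for $Z'(x_n)$ by using directly that $H(x_j,Z'(x_j),q(x_j))=rZ(x_j)\to rW(\bar x)$, so no control on $Z'$ is needed to pass to the limit.
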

\begin{proof}
Let $\{x_j\}_{j\in\mathbb N}\subseteq I$ be a sequence converging to $\bar x$ and $q_{\bar x}\in [0,1]$ be such that $\displaystyle q_{\bar x}=\limsup_{x\to\bar x^+} q(x)=\lim_{j\to \infty}q(x_j)$. We have 
\[H^{\max}(x,p_c(x))~=~\lim_{j\to +\infty} H\left(x_j,Z'(x_j),q(x_j)\right)~\le~ \lim_{j\to +\infty} H^{\max}(x_j,q(x_j))~=~H^{\max}(\bar x,q_{\bar x}).\]
From \ref{lemma:nonincmax} (4), it holds that  $p_c(\bar x)\ge q_{\bar x}$. By Proposition \ref{prop:nondev}, we have $W'(\bar x)<\xi^\sharp(\bar x,p_c(\bar x))$, and so
\[
H(\bar x,W'(\bar x),p_c(\bar x))~<~H^{\max}(\bar x,p_c(\bar x))~=~rW(\bar x).
\] 
Thus, by applying the strictly increasing map $F^{-}(\bar x,\cdot,p_c(x))$ on both sides, we obtain $W'(x)< F^{-}(x,W(x),p_c(x))$.
\end{proof}
\medskip

Since the functions $F^-(x,Z,q)$ and $G^{-}(x,Z,q)$ are smooth for $H_{\xi}(x,Z,q)\neq 0$ but not only H\"{o}lder continuous with respect to $Z$ near to the surface
\[
\Sigma~=~\left\{(x,Z,q)\in\R^3~:~H_{\xi}(x,Z,q)=0\right\}\,.
\]
Given any  $x_0\in [0,x^*)$,  the definition of the solution of  the Cauchy problem 
\begin{equation}\label{eq:V-x0}
\begin{cases}Z'(x)&=~F^{-}(x,Z(x),q(x)),\\[4mm] q'(x)&=~G^{-}(x,Z(x),q(x)), \end{cases}
\qquad\textrm{with}\qquad\begin{cases}Z(x_0)&=~W(x_0)\,,\\[4mm] q(x_0)&=~p_c(x_0).\end{cases}
\end{equation}
requires some care. For any $\varepsilon>0$, we denote by $Z_{\varepsilon}(\cdot,x_0),q_{\varepsilon}(\cdot,x_0)$ the backward solution to \eqref{eq:V-x0} with the terminal data
\[
Z_{\varepsilon}(x_0,x_0)~=~W(y_0)-\varepsilon\qquad\textrm{ and }\qquad q_{\varepsilon}(x_0,x_0)~=~p_c(x_0).
\]
With the same argument in the proof of Proposition \ref{prop:pro1}, the solution is uniquely defined on a maximal interval $[a_{\varepsilon}(x_0),x_0]$ such that $Z_{\varepsilon}(\cdot,x_0)$ is increasing, $q_{\varepsilon}(\cdot,x_0)$ is decreasing and 
\[
Z_{\varepsilon}(a_{\varepsilon}(x_0),x_0)~=~W(a_{\varepsilon}(x_0)),\qquad \qquad q_{\varepsilon}(a_{\varepsilon}(x_0),x_0)~\le~ p_{c}(a_{\varepsilon}(x_0)).
\]
Let $x^{\flat}$ be the unique solution to the equation
\begin{equation}\label{eq:x-flat}
c'(0)~=~x\cdot L'((r-\mu)x)\,.
\end{equation}
It is clear that $0<x^{\flat}<x_c$ where $x_c$ is defined in Proposition \ref{prop:nondev} as the unique solution to the equation
\[
(r+\lambda)c'(0)~=~(r-\mu)xL'\left((r-\mu)x\right).
\]
Two cases are considered:
\medskip

\underline{\textbf{CASE 1:}} For any $x_0\in ]0,x^{\flat}]$, we claim that  
\[
a_{\varepsilon}(x_0)~=~0,\qquad q_{\varepsilon}(x,x_0)~=~1\qquad\forall x\in [0,x_0]\,,
\]
and $Z_{\varepsilon}(\cdot,x_0)$ solves backward the following ODE  
\begin{equation}\label{eq:odep=1}
Z'(x)~=~F^-(x,Z(x),1),\qquad Z(x_0)~=~W(x_0)-\varepsilon
\end{equation}
for $\varepsilon>0$ sufficiently small. Indeed, let $Z_1$ be the unique backward solution of \eqref{eq:odep=1}. From \eqref{eq:Xisharp}, it holds
\[
F^-(x,W(x),1)~=~\xi^{\sharp}(x,1)~=~L'((r-\mu)x)~>~\dfrac{r-\mu}{r}\cdot L'((r-\mu)x)~=~W'(x)
\]
for all $x\in ]0,x^{\flat}]$. As in \cite{BMNP}, a contradiction argument yields
\[
0~<~Z_1(x)~<~W(x)\qquad\forall x\in ]0,x_0]\,.
\]
Thus, $Z_1$ is well-defined on $[0,x_0]$ and $Z_1(0)=0$. On the other hand, it holds
\[
Z'(x_1)~=~F^-(x,Z(x),1)~\le~ \xi^{\sharp}(x,1)~=~L'((r-\mu)x)~\le~ L'((r-\mu)x^{\flat})
\]
for all $x\leq x^{\flat}$ and \eqref{eq:x-flat} implies that 
\[
v^*(x,Z'_1(x))~=~0\qquad\forall x\in [0,x^{\flat}]\,.
\]
Therefore, $(Z_1(x),1)$ solves \eqref{eq:V-x0} and the uniqueness yields 
\[
Z_{\varepsilon}(x,x_0)~=~Z_1(x)\qquad\textrm{ and }\qquad q_{\varepsilon}(x,x_0)~=~1\qquad\forall x\in [0,x_0]\,.
\] 
Thanks to the monotone increasing property of the map $\xi\to F^-(x,\xi,1)$, a pair\newline $(Z(\cdot,x_0),q(\cdot,x_0))$ denoted by 
\[
q(x,x_0)~=~1\qquad\textrm{ and }\qquad Z(x,x_0)~=~\sup_{\varepsilon>0}Z_{\varepsilon}(x,x_0)\qquad\forall x\in [0,x_0]
\]
is the unique solution of \eqref{eq:V-x0}. If the initial size of the debt is $\bar{x}\in [0,x_0]$ we think of $Z(\bar{x},x_0)$ is as the expected cost of 
\eqref{eq:cost-feed}-\eqref{eq:cont-feed} with $p(\cdot,x_0)=1$, $x(0)=x_0$ achieved by the feedback strategies
\begin{equation}\label{eq:u-1xx}
u(x,x_0)~=~\underset{w\in [0,1]}{\mathrm{argmin}}\left\{ L(w) - \, Z'(x,x_0)\cdot w\right\},\qquad v(x,x_0)~=~0
\end{equation}
for all $x\in [0,x_0]$. With this strategy, the debt has the asymptotic behavior $x(t)\to x_0$ as $t\to\infty$.
\medskip

\underline{\textbf{CASE 2:}} For $x_0\in (x^{\flat},x_W^*]$,  system of ODEs \eqref{eq:V-x0} does not admit a unique solution in general since it is not monotone.
The following lemma will provide the existence result of \eqref{eq:V-x0} for all $x_0\in (x^{\flat},x^*_{W}]$.
\medskip

\begin{lemma}\label{lemma:bw} There exists a constant $\delta_{\flat}>0$ depending only on $x^{\flat}$ such that  for any $x_0\in \left(x^{\flat},x^*_W\right)$, it holds  
\[
x_0-a_{\varepsilon}(x_0)~\ge~ \delta_{x^\flat}\qquad\forall \varepsilon\in (0,\varepsilon_0)
\]
for some $\varepsilon_0>0$ sufficiently small.
\end{lemma}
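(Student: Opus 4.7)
The plan is to argue by contradiction. Suppose the conclusion fails; then we can extract sequences $\{x_{0,n}\}\subseteq(x^{\flat},x^*_{W})$ and $\varepsilon_n\searrow 0$ with $\ell_n:=x_{0,n}-a_{\varepsilon_n}(x_{0,n})\to 0$. Extracting a subsequence, $x_{0,n}\to\bar x\in[x^{\flat},x^*_{W}]\subseteq\,]0,x^*[$, and consequently $a_{\varepsilon_n}(x_{0,n})\to\bar x$ as well. Set $I_n:=[a_{\varepsilon_n}(x_{0,n}),x_{0,n}]$. Since $Z_{\varepsilon_n}(\cdot,x_{0,n})$ is monotone increasing on $I_n$, with $Z_{\varepsilon_n}(a_{\varepsilon_n},x_{0,n})=W(a_{\varepsilon_n})$ and $Z_{\varepsilon_n}(x_{0,n},x_{0,n})=W(x_{0,n})-\varepsilon_n$, the continuity of $W$ and $\ell_n\to 0$ force $Z_{\varepsilon_n}(\cdot,x_{0,n})\to W(\bar x)$ uniformly on $I_n$. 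A similar sandwich using $q_{\varepsilon_n}(x_{0,n},x_{0,n})=p_c(x_{0,n})$ and $q_{\varepsilon_n}(a_{\varepsilon_n},x_{0,n})\leq p_c(a_{\varepsilon_n})$, together with the monotonicity of $q_{\varepsilon_n}(\cdot,x_{0,n})$ proved as in Proposition \ref{prop:pro1}, yields $q_{\varepsilon_n}(\cdot,x_{0,n})\to p_c(\bar x)$ uniformly on $I_n$.

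By Lemma \ref{lemma:nonincmax}(2) at the ``critical point'' one has $F^{-}(\bar x,W(\bar x),p_c(\bar x))=\xi^\sharp(\bar x,p_c(\bar x))$, and $F^{-}$ is in fact jointly continuous there: any subsequential limit $\xi^*$ of $F^{-}(y_n,\eta_n,\pi_n)$ along $(y_n,\eta_n,\pi_n)\to(\bar x,W(\bar x),p_c(\bar x))$ satisfies $H(\bar x,\xi^*,p_c(\bar x))=rW(\bar x)=H^{\max}(\bar x,p_c(\bar x))$, so $\xi^*=\xi^\sharp(\bar x,p_c(\bar x))$ by uniqueness of the maximizer of $\xi\mapsto H(\bar x,\xi,p_c(\bar x))$. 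Combined with the uniform convergences above, this gives
\[
\sup_{x\in I_n}\bigl|F^{-}(x,Z_{\varepsilon_n}(x,x_{0,n}),q_{\varepsilon_n}(x,x_{0,n}))-\xi^\sharp(\bar x,p_c(\bar x))\bigr|\,\longrightarrow\,0.
\]
By Proposition \ref{prop:nondev}, $W'(\bar x)<\xi^\sharp(\bar x,p_c(\bar x))$; set $\delta_*:=\xi^\sharp(\bar x,p_c(\bar x))-W'(\bar x)>0$. Using continuity of $W'$ near $\bar x$, for all sufficiently large $n$ and every $x\in I_n$ we have
\[
F^{-}\bigl(x,Z_{\varepsilon_n}(x,x_{0,n}),q_{\varepsilon_n}(x,x_{0,n})\bigr)-W'(x)\,\geq\,\frac{\delta_*}{2}\,>\,0.
\]

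To close the argument I integrate the identity $Z'_{\varepsilon_n}(x,x_{0,n})=F^{-}(x,Z_{\varepsilon_n},q_{\varepsilon_n})$ and subtract the integral of $W'$, using the endpoint matching conditions to obtain the exact balance
\[
\int_{a_{\varepsilon_n}(x_{0,n})}^{x_{0,n}}\!\!\bigl[Z'_{\varepsilon_n}(x,x_{0,n})-W'(x)\bigr]\,dx=\bigl[(W(x_{0,n})-\varepsilon_n)-W(a_{\varepsilon_n})\bigr]-\bigl[W(x_{0,n})-W(a_{\varepsilon_n})\bigr]=-\varepsilon_n<0.
\]
On the other hand, the pointwise lower bound above forces this same integral to be at least $(\delta_*/2)\,\ell_n\geq 0$, which is the desired contradiction. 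Hence $x_0-a_{\varepsilon}(x_0)$ is bounded below by some positive constant $\delta_{\flat}$, uniformly for $x_0\in(x^{\flat},x^*_{W})$ and $\varepsilon\in(0,\varepsilon_0)$ with $\varepsilon_0$ sufficiently small. I expect the main technical obstacle to be precisely the joint continuity of $F^{-}$ at $(\bar x,W(\bar x),p_c(\bar x))$, where $H_\xi$ vanishes and $F^{-}$ is only H\"older continuous (cf.\ Lemma \ref{lemma:holder}), so that the implicit function theorem is unavailable; the argument bypasses this by appealing to the uniqueness of the maximizer of $\xi\mapsto H(\bar x,\xi,p_c(\bar x))$.
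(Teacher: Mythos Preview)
Your contradiction argument is correct and constitutes a genuinely different route from the paper's proof. The paper proceeds by a direct quantitative construction: it sets $\delta_{1,\flat}:=\inf_{x\in[x^{\flat},x^*_W]}\{\xi^\sharp(x,p_c(x))-W'(x)\}>0$, then tracks the first point $x_1\le x_0$ where $F^{-}(x,Z_\varepsilon,q_\varepsilon)$ drops to the level $W'(x)$ and uses the H\"older estimate of Lemma~\ref{lemma:holder} together with the monotonicity of $F^{-}$ in $p$ and $x$ to force a lower bound on $|x_2-x_1|$ in terms of $\delta_{1,\flat}$ and the H\"older constant $C_{x^\flat}$; this yields an \emph{explicit} $\delta_{x^\flat}$. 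Your argument instead extracts sequences $\varepsilon_n\to 0$, $\ell_n\to 0$, $x_{0,n}\to\bar x$, sandwiches both $Z_{\varepsilon_n}$ and $q_{\varepsilon_n}$ between values converging to $W(\bar x)$ and $p_c(\bar x)$, and then appeals only to the uniqueness of the maximizer of $\xi\mapsto H(\bar x,\xi,p_c(\bar x))$ to obtain $F^{-}\to\xi^\sharp(\bar x,p_c(\bar x))>W'(\bar x)$ uniformly on $I_n$; the integral identity $\int_{I_n}(Z'_{\varepsilon_n}-W')=-\varepsilon_n<0$ then contradicts the positive lower bound on the integrand. This is cleaner and bypasses Lemma~\ref{lemma:holder} entirely, at the price of not producing an explicit constant. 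One small point worth making explicit in your write-up: the boundedness of $F^{-}(y_n,\eta_n,\pi_n)$ needed to extract a subsequential limit follows from $F^{-}\le\xi^\sharp(y_n,\pi_n)$ together with the continuity of $\xi^\sharp$ (which in turn follows from the implicit function theorem applied to $H_\xi(x,\xi,p)=0$, since $H_{\xi\xi}<0$ at $\xi^\sharp$).
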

\begin{proof} From \eqref{eq:k-slop} and \eqref{eq:Xisharp}, it holds
\[
\inf_{x\in [x^{\flat},x_W^*]}\left\{\xi^{\sharp}(x,p_c(x))-W'(x)\right\}~=~\delta_{1,\flat}~>~0.
\]
In particular, we have 
\[
F^{-}(x_0,W(x_0),p_{c}(x_0))-W'(x_0)~=~\delta_{1,\flat}.
\]
By continuity of the map $\eta\mapsto F^-(x_0,\eta,p_c(x_0))$ on $[0,W(x)]$, we can find a constant $\varepsilon_1>0$ sufficiently small such that 
\[
F^-(x_0,\eta,p_c(x_0))~\ge~ W'(x_0)+\dfrac{\delta_{1,\flat}}{2}\quad\forall \xi\in [W(x_0)-\varepsilon_1,W(x_0)].
\]
On the other hand, the continuity of $W'$ yields
\[
\delta_{2,\flat}~=~\sup\left\{s\geq 0~\Big|~W'(x_0-\tau)~<~W'(x_0)+\dfrac{\delta_{1,\flat}}{4}\quad\forall \tau \in [0,s]\right\}>0.
\] 
For a fixed $\varepsilon\in (0,\varepsilon_1)$, denote by 
\[
x_1~:=~\inf\left\{s\in (0,x_0]~\Big|~F^{-}\big(x,Z_{\varepsilon}(x,x_0),q_{\varepsilon}(x,x_0)\big)>W'(x)~\forall x\in (s,x_0]\right\}.
\]
If $x_1>x_0-\delta_{2,\bar{x}}$ then it holds
\begin{equation}\label{eq:cd1}
F^{-}\big(x_1,Z_{\varepsilon}(x_1,x_0),q_{\varepsilon}(x_1,x_0)\big)~=~W'(x_1)~\le~ W'(x_0)+\dfrac{\delta_{1,\flat}}{4}
\end{equation}
and there exists $x_2\in (x_1,x_0]$ such that 
\begin{equation}\label{eq:cd2}
F^{-}\big(x_2,Z_{\varepsilon}(x_2,x_0),q_{\varepsilon}(x_2,x_0)\big)~=~W'(x_0)+\dfrac{\delta_{1,\flat}}{2}
\end{equation}
and 
\begin{equation}\label{eq:k-cod}
F^{-}\left(x,Z_{\varepsilon}(x,x_0),q_{\varepsilon}(x,x_0)\right)~\le~ W'(x_0)+\dfrac{\delta_{1,\flat}}{2}\qquad\forall x\in [x_1,x_2].
\end{equation}
Recalling that $(x,\eta,p)\mapsto F^-(x,\eta,p)$ is defined by $H(x,F^-(x,\eta,p),p)=r\eta$, by the implicit function theorem, set $\xi=F^-(x,\eta,p)$,
we have
\begin{align*}
\dfrac{\partial}{\partial p}F^-(x,\eta,p)&~=~-\dfrac{H_p(x,\xi,p)}{H_\xi(x,\xi,p)}\\
&~=~\dfrac{\xi}{p}\cdot \dfrac{u^*(x,\xi,p)-x (\lambda +r)}{u^*(x,\xi,p)-x (\lambda +r)+xp(\lambda +\mu +v^*(x,\xi))}\\
&~=~\left(1+\dfrac{x(\lambda+\mu+v^*(x,\xi)}{H_\xi(x,\xi,p)}\right)\dfrac{\xi}{p}~>~\dfrac{F^-(x,\eta,p)}{p}~>~0.
\end{align*}
Since $q_{\varepsilon}(\cdot,x_0)$ is decreasing, it holds
\[
F^{-}\big(x_1,Z_{\varepsilon}(x_1,x_0),q_{\varepsilon}(x_1,x_0)\big)~\ge~ F^{-}\big(x_1,Z_{\varepsilon}(x_1,x_0),q_{\varepsilon}(x_2,x_0)\big),
\]
and \eqref{eq:cd1}-\eqref{eq:cd2} yield
\[
F^{-}\big(x_2,Z_{\varepsilon}(x_2,x_0),q_{\varepsilon}(x_2,x_0)\big)-F^{-}\big(x_1,Z_{\varepsilon}(x_1,x_0),q_{\varepsilon}(x_2,x_0)\big)~\ge~ \dfrac{\delta_{1,\flat}}{4}.
\]
On the other hand, from \eqref{eq:gradH} it follows that  the map $x\to F^-(x,\eta,p)$ is monotone decreasing and thus 
\begin{equation}\label{eq:F-11}
F^{-}\big(x_2,Z_{\varepsilon}(x_2,x_0),q_{\varepsilon}(x_2,x_0)\big)-F^{-}\big(x_2,Z_{\varepsilon}(x_1,x_0),q_{\varepsilon}(x_2,x_0)\big)~\ge~ \dfrac{\delta_{1,\flat}}{4}.
\end{equation}
Observe that the map $\eta\to F^-(x,\eta,p)$ is H\"{o}lder continuous due to Lemma \ref{lemma:holder}. More precisely, there exist a constant $C_{x^{\flat}}>0$ such that  
\[\left|F^-(x,\eta_2,p)-F^-(x,\eta_1,p)\right|~\le~ C_{x^{\flat}}\cdot \big|\eta_2-\eta_1\big|^{\frac{1}{2}}\]
for all $\eta_1,\eta_2\in (0,W(x)]$, $x\in[\bar{x},x^*]$, $p\in [\theta(x^*),1]$. Thus, \eqref{eq:F-11} implies that
\[
\displaystyle\left|Z_{\varepsilon}(x_2,x_0)-Z_{\varepsilon}(x_1,x_0)\right|~\ge~ \dfrac{\delta^2_{1,\flat}}{16C^2_{x^{\flat}}}.
\]
Recalling \eqref{eq:k-cod}, we have 
\[
Z'_{\varepsilon}(x,x_0)~=~F^{-}\left(x,Z_{\varepsilon}(x,x_0),q_{\varepsilon}(x,x_0)\right)~\le~ W'(x_0)+\dfrac{\delta_{1,x^{\flat}}}{2}\qquad\forall x\in [x_1,x_2],
\]
and this yields
\[
|x_2-x_1|~\ge~ \displaystyle \dfrac{\delta^2_{1,x^{\flat}}}{8C^2_{x^{\flat}}[2W'(x_0)+\delta_{1,x^{\flat}}]}.
\]
Therefore, 
\[
x_0-a_{\varepsilon}(x_0)~\ge~ \delta_{x^\flat}~:=~ \min~\left\{\delta_{1,x^{\flat}},\dfrac{\delta^2_{1,x^{\flat}}}{8C^2_{x^{\flat}}[2W'(x_0)+\delta_{1,x^{\flat}}]} \right\}~>~0,
\]
and the proof is complete.
\end{proof}
\medskip

\begin{remark} 
{\it In general, the backward Cauchy problem \eqref{eq:V-x0} may admit more than one solution.}
\end{remark}

As a consequence of Lemma \ref{lemma:bw}, there exists a sequence $\{\varepsilon_{n}\}_{n\geq 0}\to 0+$ 
such that  the sequence of backwards solutions $\{(Z_{\varepsilon_n}(\cdot,x_0),q_{\varepsilon_n}(\cdot,x_0))\}_{n\geq 1}$ converges to  $(Z(\cdot,x_0),q(\cdot,x_0))$ 
which is a solution of \eqref{eq:V-x0}. With the same argument in the proof of Proposition \ref{prop:pro1}, we can extend backward the solution $(Z(\cdot,x_0),q(\cdot,x_0))$ until $a(x_0)$ such that 
\[\lim_{x\to a(x_0)+}~Z(a(x_0),x_0)~=~W(a(x_0)),\]
and Lemma \ref{lemma:compW} yields $ \lim_{x\to a(x_0)+}q(a(x_0),x_0)\leq p_c(a(x_0))$. If the initial size of the debt is $\bar{x}\in [a(x_0),x_0]$ 
we think of $Z(\bar{x},x_0)$ is as the expected cost of \eqref{eq:cost-feed}-\eqref{eq:cont-feed} with $p(\cdot,x_0)$, $x(0)=x_0$ achieved by the feedback strategies
\begin{equation}\label{eq:u-xx}
\begin{cases}
u(x,x_0)&~=~\underset{w\in [0,1]}{\mathrm{argmin}}\left\{ L(w) - \,\dfrac{Z'(x,x_0)}{p(x,x_0)}\cdot w\right\},\cr\cr
v(x,x_0)&~=~\underset{v\geq 0}{\mathrm{argmin}}\Big\{c(v)-vxZ'(x,x_0)\Big\}\,.
\end{cases}
\end{equation}
With this strategy, the debt has the asymptotic behavior $x(t)\to x_0$ as $t\to\infty$.

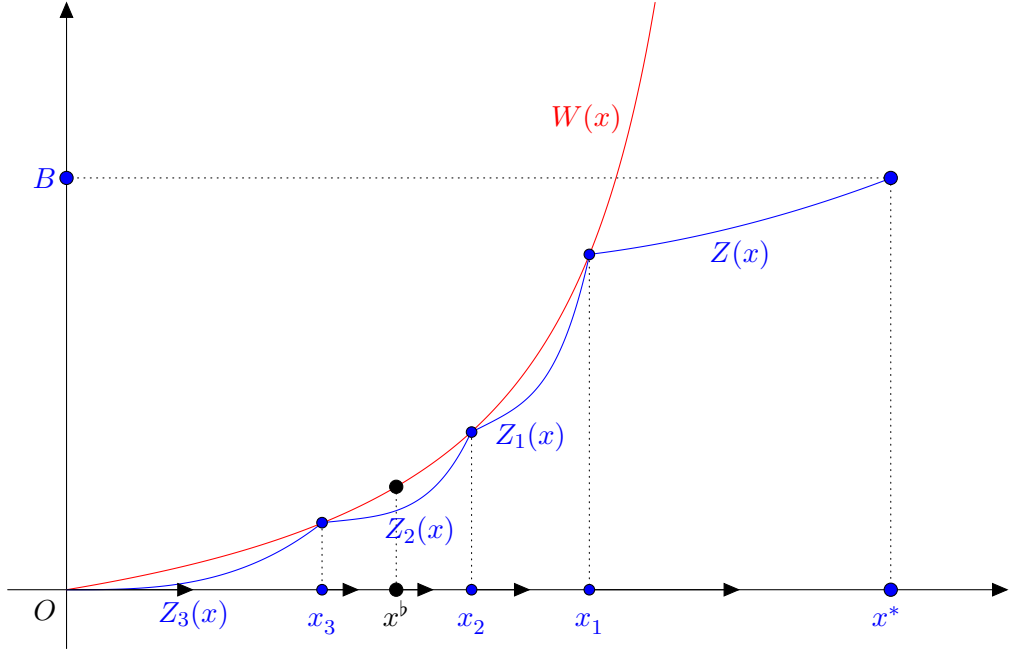
\begin{figure}[ht]
\begin{tikzpicture}[line cap=round,line join=round,>=triangle 45,x=0.1 \textwidth,y= 0.1 \textwidth]
\draw[->,color=black] (-0.5,0) -- (8,0);
\draw[->,color=black] (0.,-0.5) -- (0,5);
\clip(-0.5,-0.5) rectangle (8,5);
\draw[color=red,smooth,samples=100,domain=0:5] plot(\x,{6.0/(6.0-(\x))-1.0});
\draw[color=blue,smooth,samples=100,domain=0:2.17] plot(\x,{(\x)^(3.0)/18.0});
\draw[color=blue,smooth,samples=100,domain=2.17:3.44] plot(\x,{((\x)-2.17)^(4.0)/4.0+((\x)-2.17)/10.0+0.57});
\draw[color=blue,smooth,samples=100,domain=3.44:4.44] plot(\x,{((\x)-3.44)^(4.0)+((\x)-3.44)/2.0+1.34});
\draw[color=blue,smooth,samples=100,domain=4.44:7] plot(\x,{((\x) - 4.44)^2 / 20 + ((\x) - 4.44) / 8 + 2.85});

\draw[dotted] (2.17,0.57)--(2.17,0);
\draw[dotted] (2.80,0.875)--(2.80,0);
\draw[dotted] (3.44,1.34)--(3.44,0);
\draw[dotted] (4.44,2.85)--(4.44,0);
\draw[dotted] (7,3.5)--(7,0);
\draw[dotted] (7,3.5)--(0,3.5);

\draw[->] (0,0)--(1.08,0);\draw[->] (2.17,0)--(2.485,0);
\draw[->] (2.80,0)--(3.12,0);\draw[->] (3.44,0)--(3.94,0);
\draw[->] (4.44,0)--(5.72,0);

\node[color=black, anchor=north east] at (0,0) {$O$};

\draw[fill=blue] (2.17,0.57) circle (2.0pt);
\draw[fill=blue] (2.17,0) circle (2.0pt);
\node[color=blue, anchor = north] at (2.17,0) {$x_3^{\phantom{\flat}}$};

\draw[fill=blue] (3.44,1.34) circle (2.0pt);
\draw[fill=blue] (3.44,0) circle (2.0pt);
\node[color=blue, anchor = north] at (3.44,0) {$x_2^{\phantom{\flat}}$};

\draw[fill=blue] (4.44,2.85) circle (2.0pt);
\draw[fill=blue] (4.44,0) circle (2.0pt);
\node[color=blue, anchor = north] at (4.44,0) {$x_1^{\phantom{\flat}}$};

\draw[fill=blue] (7,3.5) circle (2.5pt);
\draw[fill=blue] (7,0) circle (2.5pt);
\draw[fill=blue] (0,3.5) circle (2.5pt);
\node[color=blue, anchor = north] at (7,0) {$x^{*\phantom{\flat}}$};
\node[color=blue, anchor = east] at (0,3.5) {$B$};

\draw[fill=black] (2.8,0.875) circle (2.5pt);
\draw[fill=black] (2.8,0) circle (2.5pt);
\node[color=black, anchor = north] at (2.8,0) {$x^{\flat}$};

\node[color=red, anchor=east] at (4.8,4) {$W(x)$};
\node[color=blue] at (5.72,2.85) {$Z(x)$};
\node[color=blue] at (3.94,1.30) {$Z_1(x)$};
\node[color=blue] at (3,0.50) {$Z_2(x)$};
\node[color=blue, anchor=north] at (1.08,0) {$Z_3(x)$};
\end{tikzpicture}
\label{fig:construction}\caption{Construction of a solution: starting from $(x^*,B)$ we solve backward the system until the first touch with the graph of $W$ at $(x_1,W(x_1))$.
Then we restart by solving backward the system with the new terminal conditions $(W(x_1),p_c(x_1))$, until the next touch with the graph of $W$ at $(x_2,W(x_2))$ and so on.
In a finite number of steps we reach the origin. If a touch occurs at $x_{n_0}<x^\flat$ then the backward solution from $x_{n_0}$ reaches the origin with $q\equiv 1$.
Given an initial value $\bar x$ of the DTI, if $0\le x_{n+1}<\bar x<x_n<x_1$ the the optimal strategy let the DTI increase asymptotically to $x_n$ (no banktuptcy), while
if $x_1<\bar x<x^*$ then the optimal strategy let the DTI increase to $x^*$, thus providing bankruptcy in finite time.}
\end{figure}
\subsubsection{Construction of an equilibrium solution.} 
We are now ready to construct an solution to the system of Hamilton-Jacobi equation \eqref{eq:Sdode} with boundary conditions \eqref{eq:boundary}. By induction, we define a family of back solutions as follows:
\[
x_1~:=~ x^*_{W}, \quad\qquad (Z_1(x),q_1(x))~=~\left(Z(x,x^*),q(x,x^*)\right)\qquad\forall x\in [x_1,x^*]
\]
and 
\[
x_{n+1}~:=~ a(x_n),\quad\qquad (Z(x,x_n),q(x,x_n))\qquad\forall x\in [x_{n+1},x_{n}]\,.
\]
From Case 1 and Lemma \ref{lemma:bw}, there exists a natural number $N_0<1+\dfrac{x^*-x^{\flat}}{\delta_{x^{\flat}}}$ 
such that our construction will be stop in $N_0$ step, i.e.,
\[
x_{N_0}~>~0,\qquad a(x_{N_0})~=~0\qquad\textrm{ and }\qquad \lim_{x\to a(x_{N_0})}Z(x,x_{N_0})~=~0\,.
\]
We will show that a feedback equilibrium solution to the debt management problem is obtained
as follows
\begin{equation}\label{eq:sol}
\left(V^*(x),p^*(x)\right)~=~\begin{cases}\left(Z(x,x^*),q(x,x^*)\right)\qquad\forall x\in (x_W,x^*],\\[4mm] 
\left(Z(x,x_{k}),q(x,x_k)\right) \qquad\forall x\in (a(x_k),x_k], k\in \{1,2,\dots, N_0\},\end{cases}\\[4mm],
\end{equation}
and
\begin{equation}\label{eq:sol-u}
\begin{cases}
u^*(x)&~=~\underset{w\in [0,1]}{\mathrm{argmin}}\left\{ L(w) - \dfrac{(V^*)'(x)}{p^*(x)}\cdot w\right\},\cr\cr
v^*(x)&~=~\underset{v\geq 0}{\mathrm{argmin}}\left\{c(v)-vx(V^*)'(x)\right\}.
\end{cases}
\end{equation}
\begin{proof}[Proof of Theorem \ref{thm:main2}]
From the monotone increasing property  of the maps  $\xi\mapsto v^*(x^*,\xi)$, $\eta\mapsto F^-(x^*,\eta,\theta(x^*))$ and $p\mapsto F^-(x^*,W(x^*), p)$, we have 
\begin{multline*}
\theta(x^*)\cdot (r+\lambda+v^*(x^*,F^-(x^*,B,\theta(x^*)))\\
~<~p_c(x^*)\cdot (r+\lambda+v^*(x^*,F^-(x^*,W(x^*),p_c(x^*)))~=~r+\lambda
\end{multline*}
and it yields \eqref{eq:Asp1}. By Proposition \ref{prop:pro1} and Lemma \ref{lemma:bw}, a pair $V^*(\cdot),p^*(\cdot)$ in \eqref{eq:sol} is well-defined on $[0,x^*]$. In the remaining steps,  we show that $V^*, p^*, u^*,v^*$ provide an equilibrium solution. Namely, they satisfy the properties (i)-(ii) 
in Definition \ref{def:eqsol}.\par\bigskip\par
\noindent \textbf{1.} To prove (i) in Definition \ref{def:eqsol}, let $V(\cdot)$ be the value function 
for the optimal control problem \eqref{eq:cost-feed}-\eqref{eq:cont-feed}. For any initial value,  $x(0)=x_0\in [0,x^*]$,  
the feedback controls $u^*$ and $v^*$ in  \eqref{eq:sol-u} yield the cost $V^*(x_0)$. This implies
\[V(x_0)~\leq ~V^*(x_0).\]  
To prove the converse inequality we need to show that, for any measurable control $u:[0,+\infty[\,\mapsto [0,1]$ and $v:[0,+\infty[\to [0,+\infty[$, calling $t\mapsto x(t)$ the solution to 
\begin{equation}\label{eq:ode7}
\dot{x}(t)~=~\left(\dfrac{\lambda+r}{p^*(x(t))} -\lambda -\mu-v(t)\right) x(t) - \dfrac{u(t)}{p^*(x(t))}\,,\qquad\qquad x(0)~=~x_0,
\end{equation}
it holds
\begin{equation}\label{eq:upVx_1}
\int_0^{T_b} e^{-rt}[L(u(x(t)))+c(v(x(t)))]\, dt + e^{-r T_b} B~\ge~ V^*(x_0)
\end{equation}
where 
\[T_b~=~\inf\, \bigl\{t\geq 0\,;~~x(t)=x^*\bigr\}\] is the bankruptcy time
(possibly with $T_b=+\infty$).
\par\bigskip\par 
For $t\in [0, T_b]$, consider the
absolutely continuous function 
\[\phi^{u,v}(t)~:=~ \int_{0}^te^{-rs}\cdot [L(u(s))+c(v(s))]~ds + e^{-rt}V^*
(x(t)).\]
At any Lebesgue point $t$ of $u(\cdot)$ and $v(\cdot)$, recalling that $(V^*,p^*)$ solves the system \eqref{eq:Sdode}, we compute
\begin{align*}
&\displaystyle \frac{d}{dt}\phi^{u,v}(t)~=~e^{-rt}\cdot \Big[L(u(t))+c(v(t))-rV^*(x(t))+
(V^*)'(x(t))\cdot\dot{x}(t)\Big]\\[4mm]
&~=\displaystyle ~e^{-rt}\cdot\Big[L(u(t))+c(v(t))-rV^*(x(t
))\\[4mm]
&\qquad\qquad\qquad+(V^*)'(x(t))\left(\left(\frac{\lambda+r}{p^*(x(t))}-\lambda-\mu-v(t)\right)x(t)-\frac{u(t)}{p^*(x(t))}\right)\Big]\\[4mm]
&\displaystyle\ge~ e^{-rt}\cdot\Big[ \min_{\omega\in[0,1]}
\left\{L(\omega)-\frac{(V^*)'(x(t))}{p^*(x(t))}\,\omega\right\} +\min_{\zeta\in [0+\infty[}\left\{c(\zeta)-(V^*)'(x(t))x(t)\,\zeta\right\}
\\[4mm]
&\qquad\qquad\qquad\qquad\qquad\qquad\qquad+ \left(\frac{\lambda+r}{p^*(x(t))}-\lambda-\mu\right)
x(t)(V^*)'(x(t))-rV^*(x(t))\Big]\\[3mm]
&~= ~e^{-rt}\cdot \Big[H\left(x(t),(V^*)'(x(t)), p^*(x(t))\right)-rV^*(x(t))\Big]= 0.
\end{align*}
Thus,
\[V^*(x_0)~=~\phi^{u,v}(0)\le  \lim_{t
\rightarrow T_b-
}\phi^{u,v}(t
)~=~\int_{0}^{T_b
} e^{-rt}\cdot [L(u(t))+c(v(t))]~dt + e^{-rT_b
}B,\]
and this yields  \eqref{eq:upVx_1}.
\par\bigskip\par
\noindent \textbf{2.} It remains to check (ii) in Definition \ref{def:eqsol}.  The case $x_0=0$ is trivial.
Two remain cases will be considered.
\par\bigskip\par
\noindent \emph{CASE 1:  If $x_0\in ]x_1,x^*]$} then $x(t)>x_1$ for all 
$t\in [0,T_b]$. This implies
\[
\dot{x}(t)~=~H_{\xi}(x(t),Z(x(t),x^*),q(x(t),x^*))\,.
\]
From the second equation in  \eqref{eq:Sdode} it follows 
\[\dfrac{d}{dt}~p(x(t))~=~p'(x(t))\dot x(t)~=~(r+\lambda+v^*(x(t)))p(x(t))-(r+\lambda),\]
Thus, for every $t\in [0, T_b]$ it holds
\[
p(x(0))~=~p(x(t))\cdot\int_{0}^{t} e^{-(r+\lambda+v^*(x(\tau)))}~d\tau+\int_{0}^t (r+\lambda)\int_{0}^{\tau} e^{-(r+\lambda+v^*(x(s)))}~ds~d\tau
\]
By letting $t\to T_b$, we obtain
\[
p(x_0)~=~\int_{0}^{T_b} (r+\lambda)\int_{0}^{\tau} e^{-(r+\lambda+v^*(x(s)))}~ds~d\tau+\theta(x^*)\cdot\int_{0}^{T_b} e^{-(r+\lambda+v^*(x(\tau)))}~d\tau
\]
\quad\\
 \emph{CASE 2:} Assume that $x_0\in \, [a(x_k),x_{k}[$ for some $k\in\{1,2,...,N_0\}$. In this case,  $T_b=+\infty$ and $x(t)\in [a_{x_k},x_k[$ such that 
\[
\lim_{t\to +\infty}~x(t)~=~x_{k}\,.
\]
With a similar computation, we obtain 
\[
p(x_0)~=~\theta(x^*)\cdot\int_{0}^{\infty} e^{-(r+\lambda+v^*(x(\tau)))}~d\tau
\]
proving (ii).
\end{proof}
\section{Dependence on $x^*$} In this section, we study the behavior of the total cost for servicing when the maximum size $x^*$ of the debt-ratio-income, at which bankruptcy is declared, becomes very large. It turns out that a crucial role in the asymptotic behavior of $V$ as $x^*\to +\infty$ is played by the \emph{speed of decay} of the salvage rate $\theta(x^*)$
as $x^*\to +\infty$, which represents the fraction of the investment that can be recovered by the investors after the bankruptcy (and the unitary bond discounted price at the bankruptcy threshold). 
More precisely, the following proposition show that
\begin{itemize}
\item if the salvage rate decay sufficiently slowly, i.e., the lenders can still recover a sufficiently high fraction of their investment after the bankruptcy, then the best choice for the borrower is to implement the Ponzi's scheme;
\item otherwise, if the salvage rate $\theta(x^*)$ decays sufficiently fast, then Ponzi's scheme is no longer an optimal solution for the borrower.
\end{itemize}
\begin{proposition}\label{Pz} Let $(V(x,x^*),p(x,x^*))$ be constructed in Theorem \ref{thm:main2}. The following holds:
\begin{itemize}
\item [(i)] if $\displaystyle\limsup_{s\to +\infty}\theta(s)s=R<+\infty$  then 
\begin{equation}\label{eq:bb}
\liminf_{x^*\to+\infty}V(x,x^*)~\ge~ B\cdot \left(1-\frac{R}{x}\right)^{\frac{r}{r+\lambda}}
\end{equation}
for all 
\[x~\ge~ \frac{1}{r-\mu}\cdot \max\left\{4,\frac{4B}{L'(0)}, \frac{4C_1B}{c'(0)}, 2C_1c^{-1}(rB)\right\}.\]
\item [(ii)] if $\displaystyle\lim_{s\to+\infty}~\theta(s)s=+\infty$ then 
\begin{equation}\label{eq:vvD}
\limsup_{x^*\to\infty}~V(x,x^*)~=~0\qquad\forall x\in [0,x^*[.
\end{equation}
\end{itemize}
\end{proposition}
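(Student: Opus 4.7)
The approach relies on two key identities valid along the equilibrium trajectory $x(\cdot)$ starting from $x$: the integral representation of $p$ from Definition \ref{def:eqsol}(ii), and the direct identity $V(x,x^*)=\int_0^{T_b}e^{-rt}[L(u^*)+c(v^*)]\,dt+e^{-rT_b}B$. The size condition on $x$ in part (i) is calibrated so that the equilibrium feedbacks $u^*,v^*$ vanish along the trajectory, reducing the dynamics to $\dot x=((\lambda+r)/p-\lambda-\mu)x$ and $\dot p=(r+\lambda)(p-1)$. A direct computation then yields the conservation-type identity
\[
\dfrac{d}{dt}\bigl(p(x(t))\,x(t)\bigr)~=~(r-\mu)\,p(x(t))\,x(t),
\]
so that $p(x,x^*)\cdot x=\theta(x^*)\,x^*\cdot e^{-(r-\mu)T_b}$ and in particular $p(x,x^*)\leq \theta(x^*)x^*/x$. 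Integrating $\dot p=(r+\lambda)(p-1)$ in parallel yields the closed form $e^{-(r+\lambda)T_b}=(1-p(x,x^*))/(1-\theta(x^*))$.

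For part (i), from the conservation identity combined with $\limsup_s\theta(s)s\leq R$ one obtains $p(x,x^*)\leq R/x+o_{x^*}(1)$, hence $1-p(x,x^*)\geq 1-R/x-o_{x^*}(1)$. Substituting into the closed form for $e^{-(r+\lambda)T_b}$ and raising to the $r/(r+\lambda)$-th power,
\[
e^{-rT_b}~\geq~\left(\dfrac{1-R/x-o_{x^*}(1)}{1-\theta(x^*)}\right)^{r/(r+\lambda)}.
\]
Since $L,c\geq 0$, we have $V(x,x^*)\geq e^{-rT_b}B$, and passing to the $\liminf$ as $x^*\to+\infty$ (with $\theta(x^*)\to 0$) yields \eqref{eq:bb}. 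Verifying that the thresholds $4B/L'(0)$, $4C_1 B/c'(0)$, $2C_1c^{-1}(rB)$ in the size hypothesis indeed force $u^*=v^*=0$ along the relevant portion of the trajectory is the main preliminary calibration step, and it is where the constants in the statement come from.

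For part (ii), I would upper-bound $V(x,x^*)$ by testing the Ponzi control $u\equiv v\equiv 0$ against the equilibrium price, obtaining
\[
V(x,x^*)~\leq~e^{-rT_b^0}B,\qquad T_b^0~=~\int_{x}^{x^*}\dfrac{p(y,x^*)\,dy}{[(\lambda+r)-(\lambda+\mu)\,p(y,x^*)]\,y}.
\]
Under the hypothesis $\theta(x^*)x^*\to+\infty$, one verifies that the equilibrium is of Ponzi type along the relevant portion of the trajectory (i.e. $u^*=v^*=0$), so the conservation identity above applies and gives $T_b^0=\log(\theta(x^*)x^*/(p(x,x^*)\cdot x))/(r-\mu)\to +\infty$, hence $V(x,x^*)\to 0$.

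The main obstacle in both parts is the control of the equilibrium feedbacks $v^*$ and $u^*$ along the trajectory: the clean conservation $\frac{d}{dt}(px)=(r-\mu)px$ relies crucially on $u^*=v^*=0$, and the threshold conditions on $x$ in part (i) (together with an analogous localization argument for part (ii)) are precisely what guarantees this no-control regime. Case (ii) is particularly delicate because $\theta(x^*)$ itself may tend to zero (merely slower than $1/x^*$), so one must rule out that the equilibrium leaves the no-control regime before the trajectory reaches the bankruptcy threshold; this is what makes the quantitative comparison between the equilibrium price and the closed-form Ponzi price the technical heart of the proof.
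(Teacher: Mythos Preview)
For part (i) your approach is correct and essentially the same as the paper's, just parametrized along the equilibrium trajectory rather than in the spatial variable. The paper works directly with the ODE system \eqref{eq:Sdode}: once $u^*=v^*=0$ is established on $[M,x^*]$, the reduced system \eqref{eq:ode---} has the explicit solution
\[
V(x,x^*)=B\Bigl(\dfrac{1-p(x,x^*)}{1-\theta(x^*)}\Bigr)^{r/(r+\lambda)},\qquad p(x,x^*)=\dfrac{\theta(x^*)x^*}{x}\Bigl(\dfrac{1-p(x,x^*)}{1-\theta(x^*)}\Bigr)^{(r-\mu)/(r+\lambda)},
\]
and your conservation law $\frac{d}{dt}(px)=(r-\mu)px$ together with the integration of $\dot p=(r+\lambda)(p-1)$ are precisely the trajectory-side derivation of these two relations. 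Both treatments leave the bootstrap step (why the size threshold on $x$ actually propagates the no-control regime backward to all of $[M,x^*]$) somewhat informal.

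Part (ii) has a real gap. Your claim that the equilibrium is of Ponzi type on the relevant portion of the trajectory is not true in general, and this is exactly the obstacle you flag but do not resolve. Under $\theta(x^*)x^*\to\infty$ the explicit formula forces $p(x,x^*)$ to be \emph{large} for fixed $x$ well below $x^*$, so the small-$p$ condition \eqref{eq:cod11} fails there, the feedbacks $u^*,v^*$ need not vanish, and your conservation identity for $px$ (hence your formula for $T_b^0$) breaks down. The paper handles this by splitting at $\tau(x^*)=\inf\{x\ge M_2:\,p(x,x^*)\le\gamma\}$. On $[\tau(x^*),x^*]$ the Ponzi regime does hold and the closed form gives $V(x,x^*)\le B\bigl(x/(\theta(x^*)x^*)\bigr)^{r/(r-\mu)}$. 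On $[M_2,\tau(x^*)]$ the paper does \emph{not} assert no-control; instead it uses the crude bound $H(x,\xi,p)\le (r+\lambda)x\xi/p$ together with $p\ge\gamma$ to obtain the differential inequality $rV\le (r+\lambda)xV'/\gamma$, hence $V(x,x^*)\le V(\tau(x^*),x^*)\,(x/\tau(x^*))^{r\gamma/(r+\lambda)}$. One then concludes by a dichotomy on whether $\tau(x^*)$ stays bounded or diverges. Your Ponzi-test inequality $V\le e^{-rT_b^0}B$ combined with $p\ge\gamma$ on $[M_2,\tau(x^*)]$ would in fact reproduce this last bound, but you still need the splitting at $\tau(x^*)$ and the closed form on $[\tau(x^*),x^*]$ to handle the case where $\tau(x^*)$ remains bounded.
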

\begin{proof} \textbf{1.} We first provide an upper bound on $v(\cdot,x^*)$. From \eqref{eq:Sdode} and \eqref{eq:detHamil}, we estimate 
\begin{align*}
H(x,\xi,p)&~\geq~\min_{v\geq 0}\left\{c(v)-x\xi v\right\}+[(r-\mu)x-1]\cdot \frac{\xi}{p}\\
&~\geq~\min_{v\geq 0}\left\{c(v)-x\xi v\right\}+	\frac{(r-\mu)x}{2}\cdot \frac{\xi}{p}~:=~ K(x,\xi,p)
\end{align*}
for all $\xi,p >0$ and $x\geq \dfrac{2}{r-\mu}$. We compute 
\[K_{\xi}(x,\xi,p)~=~\frac{(r-\mu)x}{2p}-xv_K\]
where 
\[v_{K}~=~\begin{cases}0&\textrm{ if }~~~0\le x\xi< c'(0),\cr\cr (c')^{-1}(x\xi)&\textrm{ if }~~~x\xi\ge c'(0)>0.\end{cases}\]
This implies that the maximum of $K$ is achieved for $v_K=\displaystyle\frac{r-\mu}{2p}$ and its value is 
\[
\max_{\xi\geq 0}~K(x,\xi,p)~=~K(x,\xi_K,p)~=~c\left(\frac{r-\mu}{2p}\right),\textrm{ with }\xi_K~=~\frac{c'(v_K)}{x}\,.
\]
Thus, the monotone increasing property of the map $\xi\to H(x,\xi,p(x,x^*))$ on the interval\newline $\big[0,\xi^{\sharp}(x,p(x,x^*))\big]$ implies that 
\begin{equation}\label{eq:v-cd}
F^-(x,V(x,x^*),p(x,x^*))~<~\xi_K\qquad\mathrm{\Longrightarrow}\qquad v(x,x^*)~\le~ \frac{r-\mu}{2p(x,x^*)}.
\end{equation}
provided that $\displaystyle c\left(\frac{r-\mu}{2p(x,x^*)}\right)\geq rB$. From \eqref{eq:Sdode}) and \eqref{eq:detHamil}, it follows 
\begin{multline*}
rB~\geq~-x V'(x,x^*)v(x,x^*)+[(r-\mu)x-u(x,x^*)]\cdot \frac{V'(x,x^*)}{p(x,x^*)}\\
~\geq~ \left[\frac{(r-\mu)x}{2}-1\right]\cdot \frac{V'(x,x^*)}{p(x,x^*)}~\ge~ \frac{(r-\mu)x}{4}\cdot \frac{V'(x,x^*)}{p(x,x^*)}.
\end{multline*}
Thus,  if 
\begin{equation}\label{eq:cod11}
p(x,x^*)~\le~  \min\left\{\frac{r-\mu}{2c^{-1}(rB)}, \frac{(r-\mu)c'(0)}{4B}\right\}\quad\textrm{ and }\quad x~\ge~  \max\left\{\frac{4}{r-\mu},\frac{4B}{(r-\mu)L'(0)}\right\}
\end{equation}
then 
\begin{equation}\label{eq:u=0}
\frac{V'(x,x^*)}{p(x,x^*)}~\le~  \frac{4B}{(r-\mu )x}~\le~ L'(0)\qquad\mathrm{\Longrightarrow}\qquad u(x,x^*)~=~0,
\end{equation}
and 
\begin{equation}\label{eq:v=0-D}
V'(x,x^*)x~\le~ \frac{4B}{r-\mu}\cdot p(x,x^*)~\le~ c'(0)\qquad\mathrm{\Longrightarrow}\qquad v(x,x^*)~=~0.
\end{equation}
In this case, from \eqref{eq:Sdode}, \eqref{eq:detHamil} and \eqref{eq:gradH}, it holds
\[
(r+\lambda)(p(x,x^*)-1)~=~\left(\dfrac{\lambda+r}{p(x,x^*)}-\lambda-\mu\right)xp'(x,x^*)\,.
\]
Thus, 
\[
p(x,x^*)~=~\dfrac{\theta(x^*) x^*}{x}\cdot \left(\dfrac{1-p(x,x^*)}{1-\theta(x^*)}\right)^{\frac{r-\mu}{r+\lambda}}
\]
provided that \eqref{eq:cod11} holds.
\medskip

\textbf{2.} Assume that 
\[
\limsup_{s\in [0,+\infty)}~\theta(s)s~=~R~<~+\infty,
\]
there exists a constant $C_1<+\infty$ such that $\sup_{s\in [0,+\infty)}~\theta(s)s=C_1$. Since $p(\cdot,x^*)$ is increasing, it holds
\begin{equation}\label{eq:upb-p}
p(x,x^*)~\le~ \dfrac{\theta(x^*) x^*}{x}~\le~ \frac{C_1}{x}\qquad\textrm{if \eqref{eq:cod11} holds}\,.
\end{equation}
Denote by 
\[M~:=~\dfrac{1}{r-\mu}\cdot \max\left\{4,\frac{4B}{L'(0)}, \frac{4C_1B}{c'(0)}, 2C_1c^{-1}(rB)\right\},\]
we then have
\[u(x,x^*)~=~v(x,x^*)~=~0\qquad\forall x\in [M,x^*], x^*\geq M\,.\]
From \eqref{eq:Sdode}, \eqref{eq:detHamil} and \eqref{eq:gradH}, $(V,p)$ solves the system of ODEs 
\begin{equation}\label{eq:ode---}
\begin{cases}
V'(x,x^*)&~=~\dfrac{rp}{[(\lambda+r)-(\lambda+\mu)p(x,x^*)]x}\cdot V\\[4mm]
p'(x,x^*)&~=~(\lambda+r)\cdot \dfrac{p(x,x^*)(p(x,x^*)-1)}{[(\lambda+r)-(\lambda+\mu)p(x,x^*)]\, x }
\end{cases} 
\end{equation}
for all  $x\in [M,x^*]$ with $x^*\geq M$. Solving the above system of ODEs (see in Section 5 of \cite{BMNP}), we obtain that 
\[
V(x,x^*)~=~B\cdot  \left(\dfrac{1-p(x,x^*)}{1-\theta(x^*)}\right)^{\frac{r}{r+\lambda}},\quad\qquad p(x,x^*)~=~\dfrac{\theta(x^*) x^*}{x}\cdot \left(\dfrac{1-p(x,x^*)}{1-\theta(x^*)}\right)^{\frac{r-\mu}{r+\lambda}}
\]
for all $x\geq [M,x^*]$. Thus,
\[
\liminf_{x^*\to+\infty}V(x,x^*)~\ge~ B\cdot \left(1-\frac{R}{x}\right)^{\frac{r}{r+\lambda}}\qquad\forall x\geq M
\]
and this yields \eqref{eq:bb}.
\medskip

\textbf{3.} We are now going to prove (ii). Assume that 
\begin{equation}\label{eq:as131}
\limsup_{s\to+\infty}~\theta(s)s~=~+\infty\,.
\end{equation}
Set 
\[\gamma~:=~ \min\left\{\frac{r-\mu}{2c^{-1}(rB)}, \frac{(r-\mu)c'(0)}{4B}\right\}\qquad\textrm{ and }\qquad M_2~:=~ \max\left\{\frac{4}{r-\mu},\frac{4B}{(r-\mu)L'(0)}\right\}\,.\]
For any $x^*>M_2$, denote by 
\[
\tau(x^*)~:=~ \begin{cases}
x^*\qquad&\mathrm{if}\qquad \theta(x^*)\geq \gamma\,,\\[4mm]
\inf\left\{x\geq M_2~\Big|~p(x,x^*)\leq \gamma \right\} \qquad&\mathrm{if}\qquad\ \theta(x^*) < \gamma\,.
\end{cases} 
\]
From \eqref{eq:cod11}--\eqref{eq:v=0-D}, the decreasing property of $p$ yields
\begin{equation}\label{eq:upp}
p(x,x^*)~\ge~ \gamma \qquad \forall x\in [M_2,\tau(x^*)[
\end{equation}
and
\[
p(x,x^*)~<~\gamma\qquad\Longrightarrow \qquad u(x,x^*)~=~v(x,x^*)\qquad \qquad\forall x\in [\tau(x^*),x^*]\,.
\]
As in the step 2, for any $x\in [\tau(x^*),x^*]$, we have 
\[
V(x,x^*)~=~B\cdot  \left(\dfrac{1-p(x,x^*)}{1-\theta(x^*)}\right)^{\frac{r}{r+\lambda}},\quad\qquad p(x,x^*)~=~\dfrac{\theta(x^*) x^*}{x}\cdot \left(\dfrac{1-p(x,x^*)}{1-\theta(x^*)}\right)^{\frac{r-\mu}{r+\lambda}}
\]
This implies that 
\begin{equation}\label{eq:bv-e1}
V(x,x^*)~=~B\cdot \left(\frac{p(x,x^*)x}{\theta(x^*)x^*}\right)^{\frac{r}{r-\mu}}~\le~ B\cdot \left(\frac{x}{\theta(x^*)x^*}\right)^{\frac{r}{r-\mu}}
\end{equation}
for all $x\in [\tau(x^*),x^*]$. On the other hand, for any $x\in [M_2,\tau(x^*)]$, from \eqref{eq:Sdode}, \eqref{eq:detHamil} and \eqref{eq:upp}, it holds 
\[
rV(x,x^*)~\le~ \frac{r+\lambda}{p(x,x^*)}xV'(x,x^*)~\le~ \frac{(r+\lambda)x}{\gamma}\cdot V'(x,x^*)\,.
\]
This implies that 
\begin{equation}\label{eq:bv-e2}
V(x,x^*)\le V(\tau(x^*),x^*)\cdot \left(\frac{x}{\tau(x^*)}\right)^{\frac{r\gamma}{r+\lambda}}~\le~ B\cdot\left(\frac{x}{\tau(x^*)}\right)^{\frac{r\gamma}{r+\lambda}}\qquad\forall  x\in [M_2,\tau(x^*)].
\end{equation}
For any fix $x_0\geq M_2$, we will prove  that 
\begin{equation}\label{eq:pr}
\limsup_{x^*\to+\infty}~V(x_0,x^*)~=~0.
\end{equation}
Two cases are considered:
\begin{itemize}
\item If $\limsup_{x^*\to+\infty}\tau(x^*)=+\infty$ then \eqref{eq:bv-e2} yields
\[
\lim_{x^*\to+\infty}V(x_0,x^*)~\le~ \liminf_{x^*\to+\infty}~B\cdot \left(\frac{x_0}{\tau(x^*)}\right)^{\frac{r\gamma}{r+\lambda}}~=~0.
\]
\item If $\limsup_{x^*\to+\infty}\tau(x^*)<+\infty$ then 
\[
\tau(x^*)~<~M_3\qquad\forall x^*~>~0
\]
for some $M_3>0$. Recalling \eqref{eq:bv-e1} and \eqref{eq:as131}, we obtain that 
\[
\lim_{x^*\to\infty}~V(x_0,x^*)~\le~ \lim_{x^*\to\infty}V(x_0+M_3,x^*)~\le~ \lim_{x^*\to\infty}B\cdot \left(\frac{x_0+M_3}{\theta(x^*)x^*}\right)^{\frac{r}{r-\mu}}~=~0.
\]
\end{itemize}
Thus, \eqref{eq:pr} holds and the increasing property of $V(\cdot,x^*)$ yields \eqref{eq:vvD}.
\end{proof}

We complete this section by showing that for sufficiently large initial debt-ratio-income and bankruptcy threshold and recovery fraction after bankruptcy, the optimal strategy for the borrower will use currency devaluation to deflate the debt-ratio-income. For simplicity, let us consider $x^*$ and $B^*$ sufficiently large such that

\begin{equation}\label{eq:cond12}
x^*~>~\dfrac{L'(0)+Br}{L'(0)\cdot (r-\mu)}\qquad\textrm{ and }\qquad B~\geq~\dfrac{2(r-\mu)c'(0)}{r}.
\end{equation}
In this case, the following holds:
\medskip

\begin{proposition}[Devaluating strategies]
Let $x\mapsto \left(V(x,x^*),p(x,x^*)\right)$ be an equilibrium solution of \eqref{eq:Sdode} with boundary conditions \eqref{eq:boundary}. If 
\begin{equation}\label{eq:cond13}
\theta(x^*)x^*~>~\dfrac{2(r+\lambda)c'(0)}{r-\mu}\cdot\left(\dfrac{1}{rB}+\dfrac{1}{L'(0)}\right)
\end{equation}
then the function
\[v^*(x,x^*)~=~\underset{\omega\ge 0}{\mathrm{argmin}}\left\{c(\omega)-\omega x V'(x,x^*)\right\}\]
is not identically zero.
\end{proposition}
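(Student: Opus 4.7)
Proof plan:

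I will argue by contradiction. Assume $v^*(\cdot,x^*)\equiv 0$ on $[0,x^*]$. Since $v^*$ is the argmin of $c(\omega)-\omega x V'(x,x^*)$ over $\omega\ge 0$, and because $c'(0)>0$, this is equivalent to $xV'(x,x^*)\le c'(0)$ for every $x\in[0,x^*]$. The bulk of the argument will take place at the boundary $x=x^*$, where the data $V(x^*,x^*)=B$ and $p(x^*,x^*)=\theta(x^*)$ provide strong control, but the key point is that the hypotheses \eqref{eq:cond12} are exactly what is needed to reduce the HJ equation to an explicit formula there.

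First, I evaluate the first equation of \eqref{eq:Sdode} at $x=x^*$. With $v^*(x^*)=0$ the devaluation term in \eqref{eq:detHamil} vanishes, and $\min_u\{L(u)-uV'(x^*)/\theta(x^*)\}\le 0$, so
$$rB\le\frac{(r+\lambda)-\theta(x^*)(\lambda+\mu)}{\theta(x^*)}\,x^*V'(x^*,x^*).$$
The first inequality in \eqref{eq:cond12} rearranges to $rB<L'(0)\,x^*(r-\mu)$; combined with the easy estimate $(r+\lambda)-\theta(x^*)(\lambda+\mu)\ge r-\mu$ valid for $\theta(x^*)\le 1$, this forces $V'(x^*,x^*)/\theta(x^*)<L'(0)$, hence $u^*(x^*)=0$, and the inequality above becomes the equality
$$V'(x^*,x^*)=\frac{rB\,\theta(x^*)}{x^*\bigl[(r+\lambda)-\theta(x^*)(\lambda+\mu)\bigr]}.$$

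Next, I feed this explicit expression into the two pointwise constraints $x^*V'(x^*,x^*)\le c'(0)$ (from $v^*(x^*)=0$) and $V'(x^*,x^*)\le\theta(x^*)L'(0)$ (from $u^*(x^*)=0$). Rearranging each as a lower bound on $(r+\lambda)-\theta(x^*)(\lambda+\mu)$, adding them, and dropping the non-negative term $2\theta(x^*)(\lambda+\mu)$ on the left, I get
$$rB\Bigl(\frac{\theta(x^*)}{c'(0)}+\frac{1}{L'(0)\,x^*}\Bigr)\le 2(r+\lambda),$$
which after multiplication by $c'(0)x^*/(rB)$ produces the upper bound
$$\theta(x^*)\,x^*+\frac{c'(0)}{L'(0)}\le\frac{2(r+\lambda)\,c'(0)\,x^*}{rB}.$$
The second inequality in \eqref{eq:cond12}, $rB\ge 2(r-\mu)c'(0)$, then lets me reshape this bound so that the right-hand side takes the form $\frac{2(r+\lambda)c'(0)}{r-\mu}\bigl(\tfrac{1}{rB}+\tfrac{1}{L'(0)}\bigr)$ appearing in \eqref{eq:cond13}, directly contradicting the assumption $\theta(x^*)x^*>\frac{2(r+\lambda)c'(0)}{r-\mu}\bigl(\tfrac{1}{rB}+\tfrac{1}{L'(0)}\bigr)$.

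The main obstacle is the algebraic bookkeeping in the last step: the peculiar shape of the right-hand side of \eqref{eq:cond13}, featuring the sum of reciprocals $\tfrac{1}{rB}+\tfrac{1}{L'(0)}$, is tailored to match exactly the sum of the two pointwise constraints at $x=x^*$, and the role of the first part of \eqref{eq:cond12} is to guarantee $u^*(x^*)=0$ (so the HJ equation gives an equality, not just an inequality), while the role of the second part is to absorb the term $\theta(x^*)(\lambda+\mu)$ dropped from the Hamiltonian denominator. Making these bounds sharp simultaneously is the delicate part of the argument.
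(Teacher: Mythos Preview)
Your argument has a genuine gap in the final ``reshaping'' step. The inequality you derive,
\[
\theta(x^*)\,x^*+\frac{c'(0)}{L'(0)}~\le~\frac{2(r+\lambda)\,c'(0)\,x^*}{rB},
\]
still carries an unrestricted factor of $x^*$ on the right, and nothing in \eqref{eq:cond12} lets you bound this by the fixed quantity appearing in \eqref{eq:cond13}. Indeed, asking that $\tfrac{2(r+\lambda)c'(0)x^*}{rB}\le\tfrac{2(r+\lambda)c'(0)}{r-\mu}\bigl(\tfrac{1}{rB}+\tfrac{1}{L'(0)}\bigr)$ is equivalent to $x^*(r-\mu)\le 1+\tfrac{rB}{L'(0)}$, which is precisely the \emph{negation} of the first half of \eqref{eq:cond12}. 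The structural reason is that your explicit formula gives
\[
x^*V'(x^*,x^*)~=~\frac{rB\,\theta(x^*)}{(r+\lambda)-\theta(x^*)(\lambda+\mu)},
\]
so the constraint $x^*V'(x^*,x^*)\le c'(0)$ is really a bound on $\theta(x^*)$ alone, not on the product $\theta(x^*)x^*$; a pointwise argument at $x=x^*$ cannot see that product. (There is also a smaller slip: the inequality $rB\le\tfrac{(r+\lambda)-\theta(x^*)(\lambda+\mu)}{\theta(x^*)}\,x^*V'(x^*)$ gives a \emph{lower} bound on $V'(x^*)/\theta(x^*)$, so it does not by itself yield $u^*(x^*)=0$; one needs instead the estimate $\min_{u}\{L(u)-u\xi/p\}\ge -\xi/p$, which gives $V'(x^*)/\theta(x^*)\le rB/[(r-\mu)x^*-1]<L'(0)$ via \eqref{eq:cond12}.)

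The paper's proof works on the whole interval $[M,x^*]$ with $M=\tfrac{L'(0)+Br}{L'(0)(r-\mu)}$. One first shows, for every $x\in[M,x^*]$, that $u^*(x,x^*)=0$ under the contradiction hypothesis $v^*\equiv 0$. The reduced system can then be integrated explicitly, yielding formulas for $V(x,x^*)$ and $p(x,x^*)$ in which the boundary data enter through the product $\theta(x^*)x^*$. Evaluating the resulting lower bound
\[
xV'(x,x^*)~\ge~\frac{rB\,\theta(x^*)x^*}{(r+\lambda)x+(r-\mu)\theta(x^*)x^*}
\]
at the \emph{fixed} point $x=M$ (independent of $x^*$) then produces the upper bound $\theta(x^*)x^*\le \tfrac{2(r+\lambda)c'(0)}{rB}\,M$, which is exactly the right-hand side of \eqref{eq:cond13} and gives the contradiction.
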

\begin{proof}
Set $M:=\dfrac{L'(0)+Br}{L'(0)\cdot (r-\mu)}$. Assume by a contradiction that $v^*(x,x^*)=0$ for all $x\in [M,x^*]$. In particular, we have 
\begin{equation}\label{eq:aspL}
0~\le~ x V'(x,x^*)~\le~ c'(0)\qquad x\in [M,x^*].
\end{equation}
The system \eqref{eq:Sdode} in $[M,x^*]$ reduces to 
\begin{equation}\label{eq:redSode}
\begin{cases}
rV(x)~=~\tilde{H}(x,V'(x),p(x))\cr\cr
(r+\lambda)(p(x)-1)~=~\tilde H_{\xi}(x,V'(x),p(x))\cdot p'(x)
\end{cases}\end{equation}
with
\[
\tilde{H}(x,\xi,p)~=~\min_{u\in [0,1]}\left\{ L(u) - \dfrac{u}{p}\, \xi \right\}+ \left(\dfrac{\lambda+r}{p} -\lambda-\mu\right) x\, \xi.
\]
Since  $r> \mu$ and $p\in [0,1]$, it holds
\begin{align*}
\tilde{H}(x,\xi,p)~\ge~-\dfrac{\xi}{p}+ \left(\lambda+r-p(\lambda+\mu)\right) x\, \dfrac{\xi}{p}
~\ge~\left((r-\mu)x-1\right)\cdot \dfrac{\xi}{p}
\end{align*}
and  \eqref{eq:redSode} yields  
\[
rB~\ge~rV(x,x^*)~\ge~\left((r-\mu)x-1\right)\cdot\dfrac{V'(x,x^*)}{p(x,x^*)}.
\]
Thus, for $x\in \left[M,x^*\right]$ we obtain 
\[
\dfrac{V'(x,x^*)}{p(x,x^*)}~\le~ \dfrac{rB}{(r-\mu)x-1}\le L'(0),
\]
which immediately implies 
\[
u^*(x,x^*)~:=~\underset{u\in [0,1]}{\mathrm{argmin}}\left\{ L(u) - u\cdot\dfrac{V'(x,x^*)}{p(x,x^*)}\right\}~=~0.
\]
Hence, $(V(\cdot,x^*),p(\cdot,x^*))$ solves (\ref{eq:Sdode}) on $[M,x^*]$ and 
\begin{align}\label{eq:exp-odes}
V(x,x^*)&~=~B\cdot\left(\dfrac{1-p(x,x^*)}{1-\theta(x^*)}\right)^{\frac{r}{r+\lambda}}~\geq~ B\cdot  \left(1-\dfrac{r}{r+\lambda}\cdot p(x,x^*)\right),\\ \nonumber
p(x,x^*)&~=~\dfrac{\theta(x^*) x^*}{x}\cdot \left(\dfrac{1-p(x,x^*)}{1-\theta(x^*)}\right)^{\frac{r-\mu}{r+\lambda}}~\geq~\dfrac{\theta(x^*) x^*}{x}\cdot  \left(1-\dfrac{r-\mu}{r+\lambda}\cdot p(x,x^*)\right),
\end{align}
for all $x\in [M,x^*]$. From the above inequality, we derive 
\[
p(x,x^*)~\geq~\dfrac{(r+\lambda)\theta(x^*)x^*}{(r+\lambda )x+(r-\mu)\theta(x^*)x^*}.
\]
Thus, \eqref{eq:aspL} and the first equation in \eqref{eq:exp-odes} imply 
\begin{align*}
c'(0)&~\geq~xV'(x,x^*)~=~rp(x,x^*)\cdot \dfrac{V(x,x^*)}{(\lambda+r)-(\lambda+\mu)p(x,x^*)}\\
&~\geq~\dfrac{rp(x,x^*)B}{r+\lambda } \cdot \dfrac{r+\lambda -r p(x,x^*)}{(\lambda+r)-(\lambda+\mu)p(x,x^*)}~\geq~\dfrac{rp(x,x^*)B}{r+\lambda}~\geq~\dfrac{rB\theta(x^*)x^*}{(r+\lambda )x+(r-\mu)\theta(x^*)x^*}
\end{align*}
for all $x\in [M,x^*]$. In particular, choose $x=M$ and recall \eqref{eq:cond12}, we get 
\[
M~\geq~ \dfrac{rB-(r-\mu)c'(0)}{(r+\lambda) c'(0)}\cdot {\theta(x^*)x^*}~\geq~\dfrac{rB}{2(r+\lambda)c'(0)}\cdot {\theta(x^*)x^*}
\]
and it contradicts \eqref{eq:cond13}.
\end{proof}
\section{Appendix}
We first introduce now some concepts of convex analysis, referring the reader to \cite{ET} and \cite{R} for a comprehensive introduction to the subject.
\begin{definition}[Convex conjugate and subdifferential]
We recall that the convex conjugate $F^\circ:\mathbb R^d\to\mathbb R\cup \{\pm\infty\}$ of a map $F:\mathbb R^d\to\mathbb R\cup\{+\infty\}$
is the lower semicontinuous convex function defined by 
\[F^\circ(z^*)~=~\sup_{z\in\mathbb R^d}\Big\{\langle z^*,z\rangle-F(z)\Big\}.\]
Let $F:\mathbb R^d\to\mathbb R\cup\{+\infty\}$ be proper (i.e., not identically $+\infty$), convex, lower semicontinuous functions, 
$x\in\mathrm{dom}\, F:=\{x\in\mathbb R^d:\,F(x)\in\mathbb R\}$. 
We define the \emph{subdifferential in the sense of convex analysis} of $F$ at $x$ by setting
\[\partial F(x)~:=~\{v_x\in \mathbb R^d:\,F(y)-F(x)\ge \langle v_x,y-x\rangle\textrm{ for all }y\in \mathbb R^d\}.\]
\end{definition}

The following result provide a list of some properties of the sub-differential in the sense of convex analysis.
\medskip

\begin{lemma}[Properties of the subdifferential]\label{lemma:subdifprop}
Let $F,G:\mathbb R^d\to\mathbb R\cup\{+\infty\}$ be proper (i.e., not identically $+\infty$), convex, lower semicontinuous functions, 
\begin{enumerate}
\item If $F$ is classically (Fr\'echet) differentiable at $x$, then $\partial F(x)=\{F'(x)\}$.
\item $z^*\in \partial F(z)$ if and only if $z\in \partial F^\circ(z^*)$.  
\item $F(x_0)=\displaystyle\min_{x\in \mathbb R^d} F(x)$ if and only if $0\in \partial F(x_0)$
\item $z^*\in \partial F^\circ(z)$ if and only if $F(z)+F^\circ(z^*)=\langle z^*,z\rangle$. In this case $z^*\in\mathrm{dom}\,F^\circ$;
\item $\lambda\ge 0$ we have $\partial(\lambda F)(z)=\lambda \partial F(z)$;
\item if there exists $z\in\mathrm{dom}(F)\cap \mathrm{dom}(G)$ such that $F$ is continuous at $z$ then
$\partial (F + G)(x) = \partial F (x) + \partial G(x)$ for all $x\in\mathrm{dom}(F)\cap \mathrm{dom}(G)$;
\item let $\bar y\in \mathbb R^m$, $\Lambda:\mathbb R^m\to \mathbb R^d$ be a linear map, $G$ be continuous and finite at $\Lambda(\bar y)$; 
Then $\partial(G\circ\Lambda)(y)=\Lambda^T\partial G(\Lambda y)$ for all $y\in \mathbb R^m$, where  $\Lambda^T:\mathbb R^d\to\mathbb R^m$
is the adjoint of $\Lambda$.
\end{enumerate}
\end{lemma}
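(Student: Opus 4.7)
The plan is to recognize that the seven items listed are classical assertions about subdifferentials of proper, convex, lower semicontinuous functions on $\mathbb R^d$, so I would not attempt an ab initio development but rather organize them into (a) immediate consequences of the definition, (b) consequences of Fenchel--Moreau biduality, and (c) the two deeper statements (6) and (7), which both rest on a Hahn--Banach separation argument under a constraint qualification. The references \cite{ET} and \cite{R} would be invoked for the technical heart of (6) and (7); my job here is to explain why each step works.

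For items (1), (3), and (5) I would argue directly from the definition. In (1), convexity combined with Fréchet differentiability at $x$ gives $F(x+th)\ge F(x)+t\langle v,h\rangle$ for any $v\in\partial F(x)$; dividing by $t>0$ and letting $t\to 0^+$ yields $\langle F'(x)-v,h\rangle\ge 0$ for every $h$, forcing $v=F'(x)$. The reverse inclusion $F'(x)\in\partial F(x)$ is the supporting-hyperplane property of a convex function at a point of differentiability. Item (3) is the definition of a global minimizer rewritten as $F(y)-F(x_0)\ge\langle 0,y-x_0\rangle$. Item (5) is immediate for $\lambda>0$ and trivial for $\lambda=0$ since both sides equal $\{0\}$.

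For items (2) and (4), the key tool is the Fenchel--Young inequality $F(z)+F^\circ(z^*)\ge\langle z^*,z\rangle$, combined with the observation that $z^*\in\partial F(z)$ is equivalent to $\langle z^*,z\rangle-F(z)=\sup_y\{\langle z^*,y\rangle-F(y)\}=F^\circ(z^*)$, which is precisely (4). Because $F$ is proper, convex, and lower semicontinuous, the Fenchel--Moreau theorem gives $F^{\circ\circ}=F$, so the equality characterization in (4) is symmetric in $(z,z^*)$, and this yields (2).

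The main obstacle is (6), the Moreau--Rockafellar sum rule, and correspondingly (7). The inclusion $\partial F(x)+\partial G(x)\subseteq\partial(F+G)(x)$ is obtained by adding the two subgradient inequalities. The reverse inclusion is the deep one: given $\zeta\in\partial(F+G)(x)$, one would consider the two disjoint convex sets $A=\mathrm{epi}(F-\langle\zeta,\cdot\rangle)$ (shifted so its base value is that of the sum at $x$) and a suitable translate of $\mathrm{hypo}(-G)$, and apply a geometric Hahn--Banach theorem; the continuity of $F$ at some point of $\mathrm{dom}(F)\cap\mathrm{dom}(G)$ gives an interior point to one of the sets, which is exactly the qualification condition that legitimizes the separation and produces the decomposition $\zeta=\zeta_F+\zeta_G$ with $\zeta_F\in\partial F(x)$, $\zeta_G\in\partial G(x)$. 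Item (7) is then obtained either by applying (6) in a product space to decouple $G$ from $\Lambda$, or by a direct duality calculation using $(G\circ\Lambda)^\circ=G^\circ\circ(\Lambda^T)^{-1}$ on $\mathrm{Im}\,\Lambda^T$; the hypothesis that $G$ is continuous and finite at $\Lambda(\bar y)$ is again the constraint qualification. I expect the qualification condition for (6) to be the only genuinely delicate point; everything else is either a one-line consequence of the definitions or of Fenchel--Moreau duality.
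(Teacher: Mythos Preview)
Your proposal is correct and well organized, but you should know that the paper does not actually prove this lemma: it is stated in the Appendix as a list of standard facts from convex analysis, with no argument given, relying implicitly on the textbook references \cite{ET} and \cite{R}. Your sketch therefore goes well beyond what the paper does. The division you make into (a) direct consequences of the definition, (b) consequences of Fenchel--Moreau biduality, and (c) the Moreau--Rockafellar sum rule and chain rule under a constraint qualification is exactly the right taxonomy, and your identification of the separation argument as the only nontrivial ingredient in (6) and (7) is accurate. One minor caveat on (5): for $\lambda=0$ the equality $\partial(0\cdot F)(z)=0\cdot\partial F(z)$ requires $\partial F(z)\neq\emptyset$ (otherwise the right-hand side is empty while the left is $\{0\}$), but this is a standard convention issue and does not affect anything in the paper.
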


We now collect some technical results related to the Hamiltonian function:

\begin{lemma}\label{lemma:subdiffcost} If  \textbf{(A1)}-\textbf{(A2)} hold then $L^\circ,c^\circ:\mathbb R\to\mathbb R$ are continuously differentiable such that 
\begin{align*}L^\circ(\rho)\le\max\{0,\rho\},\qquad\quad c^\circ(\rho)\le\max\{0,v_{\max}\rho\},\end{align*}
and 
\begin{align*}
(L^\circ)'(\rho)~=~\begin{cases}0,&\textrm{ if }\rho<L'(0),\\\\ (L')^{-1}(\rho),&\textrm{ if }\rho\ge L'(0),\end{cases}&&
(c^\circ)'(\rho)~=~\begin{cases}0,&\textrm{ if }\rho<c'(0),\\\\ (c')^{-1}(\rho),&\textrm{ if }\rho\ge c'(0).\end{cases}
\end{align*}
\end{lemma}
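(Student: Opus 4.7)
The plan is to compute the convex conjugates $L^\circ$ and $c^\circ$ by identifying pointwise the unique maximizer in the sup defining each conjugate, and then read off smoothness and the derivative formulas from standard convex analysis (Lemma \ref{lemma:subdifprop}). First I would extend $L$ to $\bar L:\mathbb R\to\mathbb R\cup\{+\infty\}$ by setting $\bar L\equiv +\infty$ outside $[0,1[$, and likewise $\bar c\equiv+\infty$ on $]-\infty,0[$. By assumptions \textbf{(A1)}--\textbf{(A2)}, $\bar L,\bar c$ are proper, convex and lower semicontinuous, so their conjugates coincide with $L^\circ,c^\circ$.

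Fix $\rho\in\mathbb R$ and consider $\phi_\rho(u):=\rho u-L(u)$ on $[0,1[$. Since $L''>0$, $\phi_\rho$ is strictly concave with $\phi_\rho'(u)=\rho-L'(u)$. If $\rho<L'(0)$, then $\phi_\rho'<0$ on $[0,1[$ and the sup is attained only at $u=0$, giving $L^\circ(\rho)=0$. If $\rho\ge L'(0)$, I use that $L(u)\to+\infty$ as $u\to 1^-$ together with convexity to force $L'(u)\to+\infty$ (otherwise $L'$ would be bounded and $L$ would stay finite on $[0,1[$, a contradiction); combined with $L'$ continuous and strictly increasing on $[0,1[$, this yields a unique $u^*(\rho)=(L')^{-1}(\rho)\in[0,1[$ solving $\phi_\rho'=0$, which is therefore the unique maximizer. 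In both cases the maximizer is unique, so by Lemma \ref{lemma:subdifprop}(4) one has $u^*(\rho)\in\partial L^\circ(\rho)$, and strict convexity of $L$ makes $\partial L^\circ(\rho)=\{u^*(\rho)\}$, so $L^\circ$ is differentiable at $\rho$ with $(L^\circ)'(\rho)=u^*(\rho)$. Continuity of $u^*$ on $\mathbb R$ is immediate from continuity of $(L')^{-1}$ on $[L'(0),+\infty[$ and the matching value $(L')^{-1}(L'(0))=0$ at the junction, so $L^\circ\in C^1(\mathbb R)$ with the announced formula.

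The bound $L^\circ(\rho)\le\max\{0,\rho\}$ follows by inspection: for $\rho\le L'(0)$ (in particular $\rho\le 0$, since $L'(0)>0$) the supremum equals $0$; for $\rho>0$ I estimate
\[
L^\circ(\rho)=\rho u^*(\rho)-L(u^*(\rho))\le \rho\cdot 1-0=\rho,
\]
using $u^*(\rho)\in[0,1[$ and $L\ge 0$ (which itself follows from $L(0)=0$ and $L'>0$).

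The argument for $c^\circ$ is formally identical after replacing $[0,1[$ by $[0,+\infty[$ and $L'(0)$ by $c'(0)$: strict concavity of $v\mapsto \rho v-c(v)$ gives a unique maximizer, namely $v=0$ if $\rho<c'(0)$ and $v=(c')^{-1}(\rho)$ otherwise, and Lemma \ref{lemma:subdifprop}(4) together with strict convexity of $c$ yields $C^1$ regularity with the stated derivative. The bound $c^\circ(\rho)\le\max\{0,v_{\max}\rho\}$ follows by dropping the nonnegative term $c$ at the maximizer (with $v_{\max}$ interpreted as the relevant upper bound on $(c')^{-1}(\rho)$ over the range under consideration). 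The main subtle point, and the only place the argument is not a mechanical transcription of the $L$ case, is ensuring that $(c')^{-1}$ is defined on all of $[c'(0),+\infty[$; unlike for $L$, the barrier $L(u)\to\infty$ as $u\to 1^-$ has no counterpart in (A2), so one must invoke the implicit structural hypothesis that $c'(v)\to+\infty$ (equivalently, that the conjugate is finite everywhere) to legitimize $(c')^{-1}$ on the full half-line.
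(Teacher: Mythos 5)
Your proof is correct and follows essentially the same route as the paper: identify the unique maximizer in the Legendre transform, invoke convex duality to get $C^1$ regularity and the derivative formula (the paper cites Rockafellar Theorems 23.5, 25.1, 26.3; you reconstruct the same step bare-hands from Lemma~\ref{lemma:subdifprop}), and estimate the conjugates from above. The caveat you raise at the end about $c$ is genuine and in fact exposes an inconsistency in the paper's own hypotheses: \textbf{(A2)} does not force $c'(v)\to+\infty$ (for instance $c(v)=2v-1+e^{-v}$ satisfies every item of \textbf{(A2)} but has $c'\to 2$), and in such a case $c^\circ(\rho)=+\infty$ for $\rho>\sup c'$, contradicting the claim that $c^\circ:\mathbb R\to\mathbb R$; the paper masks this in its proof of the upper bound by using $c(v)\ge I_{[0,v_{\max}]}(v)$, which tacitly treats $c$ as $+\infty$ outside a bounded interval and introduces a quantity $v_{\max}$ that \textbf{(A2)} neither defines nor guarantees to be finite. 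So your concluding remark that an extra structural hypothesis is needed (either $c'\to+\infty$, or a compact domain $[0,v_{\max}]$) diagnoses a defect of the lemma's stated hypotheses rather than of your argument, and the bound $c^\circ(\rho)\le\max\{0,v_{\max}\rho\}$, which you rightly left imprecise, can only be made rigorous once that choice is fixed.
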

\begin{proof}
Recalling the assumptions $\textbf{(A1)}-\textbf{(A2)}$ on $L,c$, the equations
\begin{align*}
L^\circ(\rho_1)+L(u)~=~u\rho_1,&&c^\circ(\rho_2)+c(v)~=~v\rho_2,
\end{align*}
admits as unique solutions 
\begin{align*}
u(\rho_1)~=~\begin{cases}0,&\textrm{ if }\rho_1<L'(0),\\ \\(L')^{-1}(\rho_1),&\textrm{ if }\rho_1\ge L'(0)>0,\end{cases}&&
v(\rho_2)~=~\begin{cases}0,&\textrm{ if }\rho_2<c'(0),\\ \\(c')^{-1}(\rho_2),&\textrm{ if }\rho_2\ge c'(0)\ge 0.\end{cases}
\end{align*}
The result now follows from Theorem 23.5, Theorem 25.1, and Theorem 26.3 in \cite{R}.
For the second part, set $I_C(s)=0$ if $s\in C$ and $0$ otherwise, 
since $L(u)\ge I_{[0,1]}(u)$ and $c(v)\ge I_{[0,v_{\max}]}(v)$, we have
\begin{align*}
L^\circ(\rho)~\le~ I^\circ_{[0,1]}(\rho)&~=~\max_{u\in [0,1]}\langle u,\rho\rangle=\max\{0,\rho\},\\
c^\circ(\rho)~\le~ I^\circ_{[0,v_{\max}]}(\rho)&~=~\max_{v\in [0,v_{\max}]}\langle v,\rho\rangle~=~\max\{0,v_{\max}\cdot\rho\}
\end{align*}
and this complete the proof.
\end{proof}

\par\medskip\par

As a consequence of Lemma \ref{lemma:subdiffcost}, the following holds:

\begin{lemma}\label{lemma:Hamgrad}
Assume \textbf{(A1)}-\textbf{(A2)}, and let $H$ be defined as in \eqref{eq:detHamil}. Then $H$ is continuous differentiable and
its gradient at points $(x,\xi,p)\in [0,+\infty[\times[0,+\infty[\times]0,1]$ can be expressed in terms of 
$u^*(\xi,p):=(L^\circ)'(\xi/p)$ and $v^*(x,\xi):=(c^\circ)'(x\xi)$ by 
\begin{equation}\begin{cases}
\displaystyle H_{x}(x,\xi,p)&~=~ \displaystyle \Big[(\lambda+r)-p(\lambda +\mu +v^*(x,\xi))\Big]\cdot \frac{\xi}{p},\\
\displaystyle H_{\xi}(x,\xi,p)&~=~ \displaystyle \frac{1}{p}\cdot \Big[x\big((\lambda+r)-p(\lambda +\mu +v^*(x,\xi))\big)-u^*(\xi,p)\Big],\\
\displaystyle H_{p}(x,\xi,p)&~=~ \displaystyle (u^*(\xi,p)-x(\lambda +r))\cdot \frac{\xi}{p^2},
\end{cases}
\end{equation}
and
\[\begin{cases}
u^*(\xi,p)&~=~\underset{u\in [0,1]}{\mathrm{argmin}}\left\{ L(u) - u\,\dfrac{\xi}{p}\right\},\\ \\
v^*(x,\xi)&~=~\underset{v\ge 0}{\mathrm{argmin}}\left\{ c(v) - vx\xi\right\}.
\end{cases}\]
Moreover, for all $x>0$, $0<p\le 1$, it holds 
\begin{align}
\label{eq:optuv}&\nabla u^*(\xi,p)~=~\dfrac{(1,-L'(u^*(x,\xi,p)))}{pL''(u_*(x,\xi,p))}\qquad\textrm{ if }~~~\xi> pL'(0),\\
\nonumber &\nabla v^*(x,\xi)~=~\dfrac{(\xi,x)}{c''(v^*(x,\xi))}\qquad \textrm{ if }~~~x\xi> c'(0),\\
\nonumber &\lim_{\xi\to +\infty}v^*(x,\xi)~=~v_{\max}.
\end{align}
\end{lemma}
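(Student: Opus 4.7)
The plan is to start from the convex-conjugate representation already recorded in \eqref{eq:detHamil-2},
\[
-H(x,\xi,p)~=~L^{\circ}\!\left(\frac{\xi}{p}\right)+c^{\circ}(x\xi)-\left(\frac{\lambda+r}{p}-\lambda-\mu\right)x\xi,
\]
and simply differentiate, using Lemma \ref{lemma:subdiffcost} to identify $(L^{\circ})'(\xi/p)=u^{*}(\xi,p)$ and $(c^{\circ})'(x\xi)=v^{*}(x,\xi)$ on the interior of the relevant domains, with both extended continuously by $0$ on the complementary regions. Since $L^{\circ}$ and $c^{\circ}$ are $C^{1}$ by Lemma \ref{lemma:subdiffcost} and the remaining term in the representation is manifestly smooth for $p>0$, the composition rule gives the $C^{1}$-regularity of $H$ on $[0,+\infty[\times[0,+\infty[\times]0,1]$ at once.

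Next I would compute the three partials by direct chain rule. For $H_{x}$, differentiating the $c^{\circ}$ term yields $-\xi v^{*}(x,\xi)$, and differentiating the affine term gives $-(\lambda+r-p(\lambda+\mu))\xi/p$; combining and negating produces exactly $\bigl[(\lambda+r)-p(\lambda+\mu+v^{*}(x,\xi))\bigr]\xi/p$. For $H_{\xi}$, I pick up $u^{*}(\xi,p)/p$ from $L^{\circ}$, $xv^{*}(x,\xi)$ from $c^{\circ}$, and $-(\lambda+r-p(\lambda+\mu))x/p$ from the affine term; collecting gives the claimed formula. For $H_{p}$, only $L^{\circ}(\xi/p)$ and the affine term depend on $p$, and the chain rule yields $-u^{*}(\xi,p)\xi/p^{2}+(\lambda+r)x\xi/p^{2}$, which after negation is $(u^{*}(\xi,p)-x(\lambda+r))\xi/p^{2}$. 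The identification of the argmins with the subgradients of $L^{\circ}$ and $c^{\circ}$ is the content of Lemma \ref{lemma:subdifprop}(4) together with the strict convexity of $L$ and $c$ from \textbf{(A1)}--\textbf{(A2)}, which makes the minimizers unique.

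For the gradient formulas of $u^{*}$ and $v^{*}$ in the non-degenerate regions, I would apply the inverse-function theorem: in the region $\xi>pL'(0)$ the formula $L'(u^{*}(\xi,p))=\xi/p$ holds by Lemma \ref{lemma:subdiffcost}, and differentiating implicitly in $\xi$ and $p$ (noting $L''(u^{*})>0$ by \textbf{(A1)}) produces $\nabla u^{*}=(1,-L'(u^{*}))/(pL''(u^{*}))$; the analogous computation on $x\xi>c'(0)$ uses $c'(v^{*}(x,\xi))=x\xi$ and gives $\nabla v^{*}=(\xi,x)/c''(v^{*})$. Finally, the limit $\lim_{\xi\to+\infty}v^{*}(x,\xi)=v_{\max}$ follows because $v^{*}(x,\xi)=(c')^{-1}(x\xi)$ for $x\xi$ large, and since $c'$ is strictly increasing with $c(v)\to+\infty$, we have $(c')^{-1}(\rho)\to \sup\{v:c'(v)<+\infty\}=v_{\max}$ as $\rho\to+\infty$.

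The only subtle step is continuity of $\nabla H$ across the threshold sets $\{\xi=pL'(0)\}$ and $\{x\xi=c'(0)\}$ where $u^{*}$ or $v^{*}$ transitions from the constant value $0$ to the strictly positive branch; I would handle this by observing that on each side the expressions for $H_{x},H_{\xi},H_{p}$ match at the threshold (both branches give the same value when $u^{*}=0$, respectively $v^{*}=0$) and that $u^{*},v^{*}$ are themselves continuous across these sets by Lemma \ref{lemma:subdiffcost}, so the formulas glue continuously. With this, the $C^{1}$ character and all the claimed identities are established.
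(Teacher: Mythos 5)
Your approach is exactly what the paper intends: the paper states Lemma~\ref{lemma:Hamgrad} with no proof at all, merely as a direct consequence of Lemma~\ref{lemma:subdiffcost}, and the obvious route is to differentiate the conjugate representation~\eqref{eq:detHamil-2} via the chain rule and identify $(L^\circ)'$ and $(c^\circ)'$ with $u^*$ and $v^*$, which is precisely what you do. The only blemish is a sign inconsistency in your intermediate bookkeeping for $H_x$ (the contributions $-\xi v^*$ and $-(\lambda+r-p(\lambda+\mu))\xi/p$ are written with incompatible orientations before the ``negating'' step), but the final expressions for all three partials are correct, and the gluing argument across the threshold sets $\{\xi=pL'(0)\}$ and $\{x\xi=c'(0)\}$ addresses the $C^1$ matching properly.
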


\begin{lemma}\label{lemma:Hprop}
Let the assumptions \textbf{(A1)}-{\bf(A2)} hold. Then 
\begin{enumerate}
\item for all $\xi\geq 0$ and $p\in ]0,1]$,  the function $H$ satisfies
\begin{align*}
H(x,\xi,p)&~\le~\left(\dfrac{\lambda+r}{p} -(\lambda+\mu) \right) x\xi;\\
H(x,\xi,p)&~\ge~\left(\frac{(\lambda+r) x-1}{p}-(\lambda+\mu+v^*(x,\xi))x\right)\cdot \xi\\
&~\ge~\left(\frac{(\lambda+r) x-1}{p}-(\lambda+\mu+v_{\max})x\right)\cdot \xi;\\
H_\xi(x,\xi,p)&~\le~\left(\dfrac{\lambda+r}{p} -(\lambda+\mu)\right) x;\\
H_\xi(x,\xi,p)&~\ge~\frac{(\lambda+r) x-1}{p}-(\lambda+\mu+v^*(x,\xi))x\\
&~\ge~\frac{(\lambda+r)x-1}{p}-(\lambda+\mu+v_{\max})x;
\end{align*}
\item for every $x,p>0$ 
the map $\xi\mapsto H(x,\xi,p)$ is concave down and satisfies
\begin{align*}
H(x,0,p)~=~0,&&H_\xi(x,0,p)~=~\left(\dfrac{\lambda+r}{p} -(\lambda+\mu)\right) x.
\end{align*}
\end{enumerate}
\end{lemma}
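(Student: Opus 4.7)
The plan is to read off all six inequalities directly from the definition of $H$ and from the gradient formula already established in Lemma~\ref{lemma:Hamgrad}, then deduce the concavity and boundary values in item (2) from the representation of $H$ as a sum of a concave conjugate and a linear term.

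For the bounds in item (1), I start from the formula
\[H(x,\xi,p)~=~\min_{u\in[0,1]}\Big\{L(u)-u\tfrac{\xi}{p}\Big\}+\min_{v\geq 0}\Big\{c(v)-vx\xi\Big\}+\Big(\tfrac{\lambda+r}{p}-\lambda-\mu\Big)x\xi.\]
The upper bound on $H$ follows by taking the admissible choice $u=0$, $v=0$, which gives $L(0)=c(0)=0$, so each minimum is $\le 0$. For the lower bound I use that the minima are actually attained at $u^{*}(\xi,p)\in[0,1]$ and $v^{*}(x,\xi)\in[0,+\infty[$ by Lemma~\ref{lemma:subdiffcost}, and then discard the nonnegative terms $L(u^{*})\ge0$, $c(v^{*})\ge0$, leaving
\[H(x,\xi,p)~\ge~-u^{*}(\xi,p)\tfrac{\xi}{p}-v^{*}(x,\xi)x\xi+\Big(\tfrac{\lambda+r}{p}-\lambda-\mu\Big)x\xi.\]
Bounding $u^{*}\le 1$ gives the first stated lower bound, and then the crude estimate $v^{*}(x,\xi)\le v_{\max}$ (valid from the last displayed identity of Lemma~\ref{lemma:Hamgrad}) yields the second. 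The upper and lower bounds on $H_\xi$ are obtained the same way from the explicit expression
\[H_\xi(x,\xi,p)~=~\tfrac{1}{p}\Big[x\big((\lambda+r)-p(\lambda+\mu+v^{*}(x,\xi))\big)-u^{*}(\xi,p)\Big]\]
provided by Lemma~\ref{lemma:Hamgrad}: to get the upper estimate I drop the negative terms $-xpv^{*}$ and $-u^{*}$, and to get the two lower estimates I apply $u^{*}\le 1$ and then $v^{*}\le v_{\max}$.

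For item (2), I invoke the conjugate representation \eqref{eq:detHamil-2},
\[-H(x,\xi,p)~=~L^{\circ}\!\big(\tfrac{\xi}{p}\big)+c^{\circ}(x\xi)-\Big(\tfrac{\lambda+r}{p}-\lambda-\mu\Big)x\xi,\]
which expresses $\xi\mapsto -H(x,\xi,p)$ as the sum of two convex functions ($L^{\circ}$ and $c^{\circ}$ are convex as Legendre transforms, composed with affine maps in $\xi$) plus a linear term; hence $\xi\mapsto H(x,\xi,p)$ is concave. For the boundary value $H(x,0,p)=0$, since $L,c\ge 0$ with $L(0)=c(0)=0$ we have $L^{\circ}(0)=-\inf L=0$ and similarly $c^{\circ}(0)=0$, so the formula gives $H(x,0,p)=0$. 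Finally, for $H_\xi(x,0,p)$ I plug $\xi=0$ into the gradient formula of Lemma~\ref{lemma:Hamgrad}: since $0<pL'(0)$ and $0<c'(0)$, the minimizers satisfy $u^{*}(0,p)=0$ and $v^{*}(x,0)=0$, giving exactly $H_\xi(x,0,p)=\bigl(\tfrac{\lambda+r}{p}-\lambda-\mu\bigr)x$.

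There is no real obstacle here: the whole lemma is a bookkeeping exercise once Lemma~\ref{lemma:subdiffcost} and Lemma~\ref{lemma:Hamgrad} are available. The only point deserving care is the use of the bound $v^{*}(x,\xi)\le v_{\max}$ in the ``second'' lower estimates; this is justified by the last identity in Lemma~\ref{lemma:Hamgrad}, which guarantees $v^{*}(x,\xi)\le\lim_{\eta\to+\infty}v^{*}(x,\eta)=v_{\max}$ uniformly in $\xi\ge 0$, so the step is legitimate for every $\xi\ge 0$, $x>0$, $p\in{]0,1]}$.
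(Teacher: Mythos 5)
Your proof is correct and proceeds by essentially the same route as the paper: read off the $H$-bounds from the variational formula \eqref{eq:detHamil} (equivalently the conjugate representation \eqref{eq:detHamil-2}), read off the $H_\xi$-bounds from the gradient formula in Lemma~\ref{lemma:Hamgrad}, and use convexity of $L^\circ$ and $c^\circ$ for item (2). The only cosmetic difference is that the paper obtains the upper and lower bounds on $H_\xi$ from monotonicity of $\xi\mapsto H_\xi$ (comparing to $\xi=0$ and to $\xi\to+\infty$), whereas you bound $u^*$ and $v^*$ directly in the gradient formula --- the same estimate by a different accounting.
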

\begin{proof}
The concavity of $\xi\mapsto H(x,\xi,p)$ for every $x,p>0$ is immediate from the definition of $H$ in \eqref{eq:detHamil}. 
The equalities in item $(2)$ are immediate from Lemma \ref{lemma:subdiffcost}.
The upper bound on $H(x,\xi,p)$ follows from the positivity of $L^\circ$ and $c^\circ$.
By concavity, the map $\xi\mapsto H_\xi(x,\xi,p)$ is monotone decreasing, thus $H_\xi(x,\xi,p)\le H_\xi(x,0,p)$, which proves the upper bound
on $H_\xi(x,\xi,p)$ together with item (2). The lower estimate for $H(x,\xi,p)$ comes from the second part of Lemma \ref{lemma:subdiffcost},
in particular from the upper estimate on $L^\circ(\cdot)$.
The lower estimate for $H_\xi(x,\xi,p)$ comes from Lemma \ref{lemma:Hamgrad}, noticing that
\[\lim_{\xi\to +\infty}u^*(\xi,p)~=~\lim_{\rho\to +\infty}(L')^{-1}(\rho)=1,\qquad \lim_{\xi\to +\infty}v^*(x,\xi)~=~\lim_{\rho\to +\infty}(c')^{-1}(\rho)~=~v_{\max},\]
and using the decreasing property of $\xi\mapsto H_\xi(x,\xi,p)$, i.e., the fact that
\[\lim_{\zeta\to +\infty}H_\xi(x,\zeta,p)~\le~ H_\xi(x,\xi,p)\]
for all $x\ge 0$, $p\in ]0,1]$, $\xi\in\mathbb R$.
\end{proof}
\medskip

\begin{lemma}\label{lemma:holder-gen}
Assume that $f:I\to\mathbb R$ is a $C^2$ convex strictly increasing function defined on a real interval $I$, and satisfying $f''\ge \delta >0$.
Then, denoted by $g$ its inverse function, $g:f(I)\to I$, we have that $g$ is $1/2$-H\"older continuous.
\end{lemma}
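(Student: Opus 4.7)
The plan is to obtain a quadratic lower bound for the increment of $f$ in terms of the increment of its argument, using only the convexity hypothesis $f''\ge\delta$, and then invert this inequality.

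Fix $y_1,y_2\in f(I)$ with $y_1<y_2$ and set $x_i:=g(y_i)$, so that $x_1<x_2$ since $g$ is strictly increasing. Since $f$ is strictly increasing we have $f'(t)\ge 0$ for every $t\in I$; and since $f''\ge \delta>0$, the derivative $f'$ satisfies the strong monotonicity estimate $f'(t)\ge f'(x_1)+\delta(t-x_1)$ for every $t\ge x_1$ in $I$. These are the only two facts about $f$ that the argument will use.

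By the Fundamental Theorem of Calculus applied on $[x_1,x_2]$, I would then compute
\[
y_2-y_1~=~f(x_2)-f(x_1)~=~\int_{x_1}^{x_2} f'(t)\,dt~\ge~\int_{x_1}^{x_2}\bigl[f'(x_1)+\delta(t-x_1)\bigr]\,dt~=~f'(x_1)(x_2-x_1)+\frac{\delta}{2}(x_2-x_1)^2,
\]
and since $f'(x_1)\ge 0$ the first summand can be dropped, yielding $y_2-y_1\ge \tfrac{\delta}{2}(x_2-x_1)^2$. Solving for $x_2-x_1$ gives
\[
|g(y_2)-g(y_1)|~=~x_2-x_1~\le~\sqrt{\frac{2}{\delta}}\,|y_2-y_1|^{1/2},
\]
which is exactly $1/2$-H\"older continuity of $g$ with constant $\sqrt{2/\delta}$ (and matches the constant used in the proof of Lemma \ref{lemma:holder}, where this lemma is applied with $f=-H(x,\cdot,p)/r$ and $\delta=\min\{1,x_1^2p_1\}/(r\delta_0)$).

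There is no real obstacle: the only delicate point is that we genuinely need the inequality $f'(x_1)\ge 0$, i.e., that $f$ is (at least non-strictly) monotone, because on a generic convex $C^2$ function with $f''\ge\delta$ one could have $f'(x_1)<0$ and the inverse of $f$ would then be only locally defined on each branch; the argument above exactly accommodates the worst case, which occurs when $x_1$ approaches an endpoint of $I$ where $f'=0$. Away from such a point one actually recovers a better, locally Lipschitz bound from the $f'(x_1)(x_2-x_1)$ term that I discarded.
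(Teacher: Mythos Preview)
Your argument is correct and is essentially identical to the paper's own proof: both obtain the quadratic lower bound $f(x_2)-f(x_1)\ge f'(x_1)(x_2-x_1)+\tfrac{\delta}{2}(x_2-x_1)^2$ from $f''\ge\delta$, drop the nonnegative linear term using that $f$ is increasing, and invert to obtain the $1/2$-H\"older bound with constant $\sqrt{2/\delta}$. The only cosmetic differences are that the paper writes the estimate via the double integral $\int_{x_1}^{x_2}\int_{x_1}^{t}f''(s)\,ds\,dt$ rather than first passing to the pointwise bound on $f'$, and it swaps the roles of the symbols $x$ and $y$ relative to your notation.
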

\begin{proof}
Indeed, let $x_1,x_2\in f^{-1}(I)$ with $x_1\le x_2$, and set $y_1=g(x_1)$ and $y_2=g(x_2)$.
\begin{align*}
f(y_2)-f(y_1)&~=~\int_{y_1}^{y_2} f'(t)\,dt=\int_{y_1}^{y_2} [f'(t)-f'(y_1)]\,dt\\ &~=~f'(y_1) \cdot (y_2-y_1)+\int_{y_1}^{y_2}\int_{y_1}^t f''(s)\,ds\,dt\\ 
&~\ge~f'(y_1) \cdot (y_2-y_1)+\dfrac{\delta }{2} (y_2-y_1)^2\ge \dfrac{\delta }{2} (y_2-y_1)^2,
\end{align*}
since $f$ is strictly increasing, $f''(s)\ge \delta $, and $y_1\le y_2$.
Thus if $x_2\ge x_1$ we have
\[|g(x_2)-g(x_1)|~\le~ \sqrt{\dfrac{2}{\delta }}|x_2-x_1|^{1/2}.\]
By switching the roles of $x_2$ and $x_1$, the same holds true if $x_1\ge x_2$.
\end{proof}
\v

{\bf Acknowledgments.} The research by K. T. Nguyen was partially supported by a grant from
the Simons Foundation/SFARI (521811, NTK).

\end{document}